\theoremstyle{plain}
\newtheorem{theorem}{\bf Theorem}[section]
\newtheorem{lemma}[theorem]{\bf Lemma}
\newtheorem{proposition}[theorem]{\bf Proposition}
\newtheorem{corollary}[theorem]{\bf Corollary}
\theoremstyle{definition}
\newtheorem{definition}[theorem]{\bf Definition}
\newtheorem{remark}[theorem]{\bf Remark}
\newtheorem{question}[theorem]{\bf Question}
\newtheorem*{question*}{\bf Question}
\newcommand{\nai}[2]{\langle #1,#2\rangle}
\newcommand{\eqa}[1]{
\begin{align*}
#1
\end{align*}}
\title[Polish Groups of Finite Type]{Unitarizability, Maurey-Nikishin Factorization, \\ and Polish Groups of Finite Type}
\author{Hiroshi Ando}
\address{Hiroshi Ando, Department of Mathematics and Informatics, Chiba University, 1-33 Yayoi-cho, Inage, Chiba, 263-0022, Japan}
\email{hiroando@math.s.chiba-u.ac.jp}
\author{Yasumichi Matsuzawa}
\address{Yasumichi Matsuzawa, Department of Mathematics, Faculty of Education, Shinshu University, 6 -Ro, Nishinagano, Nagano City 380-8544, Japan}
\email{myasu@shinshu-u.ac.jp}
\author{Andreas Thom} 
\address{Andreas Thom, Fachbereich Mathematik, Institut f\"ur Geometrie, TU Dresden, 01062 Dresden, Germany}
\email{andreas.thom@tu-dresden.de}
\author{Asger T\"ornquist}
\address{Asger T\"ornquist, Department of Mathematical Sciences, University of Copenhagen, Universitetspark 5, 2100 Copenhagen, Denmark}
\email{asgert@math.ku.dk}
\keywords{uniformly bounded representation, unitarizability, Maurey-Nikishin factorization, Polish groups of finite type.}
\begin{document}

\maketitle

\begin{abstract}
Let $\Gamma$ be a countable discrete group, and $\pi\colon \Gamma\to {\rm{GL}}(H)$ be a representation of $\Gamma$ by invertible operators on a separable Hilbert space $H$. 
We show that the semidirect product group $G=H\rtimes_{\pi}\Gamma$ is SIN ($G$ admits a two-sided invariant metric compatible with its topology) and unitarily representable ($G$ embeds into the unitary group $\mathcal{U}(\ell^2(\mathbb N))$), if and only if $\pi$ is uniformly bounded, and that $\pi$ is unitarizable if and only if $G$ is of finite type: that is, $G$ embeds into the unitary group of a II$_1$-factor. Consequently, we show that a unitarily representable Polish SIN group need not be of finite type, answering a question of Sorin Popa.  
The key point in our argument is an equivariant version of the Maurey--Nikishin factorization theorem for continuous maps from a Hilbert space to the space $L^0(X,m)$ of all measurable maps on a probability space.  
\end{abstract}


\section{Introduction}
In this paper, we continue our study \cite{AM12-2} of Popa's class $\mathscr{U}_{\rm{fin}}$ of finite type Polish groups. Here, a Polish group $G$ is said to be of {\it finite type} if it is isomorphic as a topological group to a closed subgroup of the unitary group $\mathcal{U}(M)$ of some finite von Neumann algebra $M$ with separable predual, where $\mathcal{U}(M)$ is endowed with the strong operator topology. 
The class $\mathscr{U}_{\rm{fin}}$ was introduced in \cite{Po07}, in which Popa proved the celebrated Cocycle Superrigidity Theorem. This theorem asserts that if $\Gamma$ is a $w$-rigid group and $\alpha\colon \Gamma\curvearrowright (X,\mu)$ is a probability measure preserving (p.m.p) action of $\Gamma$ on a standard probability space $(X,\mu)$ with certain properties, 
then any 1-cocycle $w\colon X\times \Gamma\to G$ for $\alpha$ with values in a Polish group $G\in \mathscr{U}_{\rm{fin}}$ is cohomologous to a group homomorphism $\Gamma \to G$, see \cite{Po07} for details. 
Despite the importance of Popa's Theorem and its wide range of applications, little has been known about the properties of the class $\mathscr{U}_{\rm{fin}}$.    
There are two necessary conditions for a Polish group to be in $\mathscr{U}_{\rm{fin}}$. Namely, if $G\hookrightarrow \mathcal{U}(M)$ for some finite von Neumann algebra $(M,\tau)$ with separable predual, then clearly $G\hookrightarrow \mathcal{U}(\ell^2(\mathbb N))$. $G$ is said to be {\it unitarily representable} if the abovementioned condition is satisfied. Moreover, the restriction of the 2-norm metric $d$ on $\mathcal{U}(M)$ given by 
\[d(u,v):=\|u-v\|_2, \ \ \|x\|_2:=\tau(x^*x)^{\frac{1}{2}}\ (u,v\in \mathcal{U}(M))\]
defines a two-sided invariant metric on $G$ that is compatible with the topology. 
It is well-known that a metrizable topological group admits a two-sided invariant metric compatible with the topology if and only if the group is SIN (see Definition \ref{def: SIN and Ufin}). 
In \cite[$\S$6.5]{Po07}, Popa questioned whether the abovementioned two conditions are sufficient:  
\begin{question*}[Popa]\label{question: Popa} 
Let $G$ be a unitarily representable SIN Polish group. Is $G$ necessarily of finite type? 
\end{question*}
Motivated by the study \cite{AM12-1} of the Lie structure of the class $\mathscr{U}_{\rm{fin}}$, two of us began to research Popa's $\mathscr{U}_{\rm{fin}}$-problem \cite{AM12-2}, and obtained some positive answers when additional conditions were placed on $G$. Among other things, we showed that the question has a positive answer if $G$ is amenable. For example, the Hilbert-Schmidt group $\mathcal{U}(H)_2$ is a finite type Polish group. However, the general case was left unaddressed.

Note that there is a similarity between the $\mathscr{U}_{\rm{fin}}$-problem and the problem of finding a trace on a finite von Neumann algebra, which was established by Murray and von Neumann in their classical papers on the subject.  
Yeadon \cite{Yeadon71} found a relatively simpler proof of Murray's and von Neumann's theorem as follows: Let $M$ be a finite von Neumann algebra. For simplicity, we assume that $M$ has separable predual, and let $\varphi$ be a faithful normal state on $M$. We let the unitary group $\mathcal{U}(M)$ act on the predual $M_*$ by conjugation: \begin{equation}
\alpha_u(\psi):=u\psi u^*=\psi(u^*\cdot u),\ \ \ \ \  \psi\in M_*,\, u\in \mathcal{U}(M).
\end{equation}
Note that $\alpha$ is a linear isometric action on the Banach space $M_*$. Using the finiteness assumption, Yeadon showed that the orbit $\mathcal{O}_{\varphi}:=\{\alpha_u(\varphi); u\in \mathcal{U}(M)\}$ is relatively weakly compact. Thus, its weak convex closure $\overline{\text{co}}(\mathcal{O}_{\varphi})\subset M_*$ is a weakly compact convex subset of $M_*$ on which $\mathcal{U}(M)$ acts by affine isometries. By the Ryll-Nardzewski Theorem, this implies that there exists an $\alpha$-fixed point, which is easily seen to be a normal faithful tracial state $\tau$.  

In the first named author's PhD thesis, we adopted the Yeadon argument up to a certain point. This part of the thesis formed an unpublished joint work with the second named author. The argument was as follows. Let $G$ be a unitarily representable Polish SIN group. It is well-known (see e.g. \cite{Gao05}) that $G$ is unitarily representable if and only if $G$ admits a continuous positive definite function $\varphi$ that generates the topology. An analogous characterization for $\mathscr{U}_{\rm{fin}}$ is obtained in \cite{AM12-2}. Let $\mathcal{P}(G)$ be the set of all normalized continuous positive definite functions on $G$. Then $G$ is of finite type if and only if there exists a character (i.e., a function $\varphi\in \mathcal{P}(G)$ which is conjugation-invariant) that generates a neighborhood basis of the identity on $G$ (see Theorem \ref{thm: characterization of finite type} for details). It is clear that $\mathcal{P}(G)$ is a convex subset of the Banach space $C_{\rm{b}}(G)$ of all continuous bounded functions on $G$ equipped with the sup norm. 
Define an affine isometric action $\beta\colon G\curvearrowright C_{\rm{b}}(G)$ by:
\begin{equation*}
\beta_g(\psi):=\psi(g^{-1}\cdot g), \ \ \ \ \ \ \psi\in C_{\rm{b}}(G),\, g\in G.
\end{equation*}
If the orbit $\mathcal{O}_{\varphi}=\{\beta_g(\varphi); g\in G\}$ is relatively weakly compact, then by the  Ryll-Nardzewski Theorem, there exists a fixed point $\psi$ in the closed convex hull $\overline{\text{co}}(\mathcal{O}_{\varphi})$. Once we get a fixed point $\psi\in \overline{\text{co}}(\mathcal{O}_{\varphi})$, one can show, using Theorem \ref{thm: characterization of finite type} below, that $G$ is of finite type. 
Unfortunately, we do not know if $\mathcal{O}_{\varphi}$ is relatively weakly compact. What we could show is the following result.
\begin{theorem}[Ando--Matsuzawa, unpublished] 
Let $G$ be a unitarily representable Polish {\rm SIN} group, and assume that $\varphi\in \mathcal{P}(G)$ generates the topology of $G$. 
Then the following statements hold: 
\begin{itemize}
\item[{\rm{(i)}}] $\mathcal{O}_{\varphi}$ is relatively compact in the compact open topology. 
\item[{\rm{(ii)}}] If $\mathcal{O}_{\varphi}$ satisfies one of the following conditions, then $G$ is of finite type. 
\begin{itemize}
\item[{\rm{(iia)}}] The closure of $\mathcal{O}_{\varphi}$ with respect to the topology of pointwise convergence is norm-separable. 
\item[{\rm{(iib)}}] $\mathcal{O}_{\varphi}$ is relatively weakly compact. 
\end{itemize}   
\end{itemize}
\end{theorem}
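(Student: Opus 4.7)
For \textbf{(i)}, the plan is to verify the hypotheses of the Arzel\`a--Ascoli theorem on every compact subset of $G$. Fix a bi-invariant compatible metric $d$ on $G$, which exists by the SIN assumption. For any normalized continuous positive definite $\varphi$ one has the standard GNS-derived inequality
\[
|\varphi(x)-\varphi(y)|^{2}\le 2\bigl(1-\mathrm{Re}\,\varphi(x^{-1}y)\bigr),
\]
and therefore
\[
|\beta_{g}(\varphi)(x)-\beta_{g}(\varphi)(y)|^{2}\le 2\bigl(1-\mathrm{Re}\,\varphi(g^{-1}x^{-1}yg)\bigr).
\]
Because $d$ is bi-invariant, $d(g^{-1}x^{-1}yg,e)=d(x^{-1}y,e)=d(x,y)$, and continuity of $\varphi$ at $e$ then supplies a $g$-independent modulus of continuity. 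Thus $\mathcal{O}_{\varphi}$ is uniformly equicontinuous on $G$ and bounded by $1$ in supremum norm, and Arzel\`a--Ascoli on each compact $K\subset G$ yields relative compactness of $\mathcal{O}_{\varphi}$ in the compact-open topology.

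For \textbf{(ii)}, the strategy in both cases is to produce a Ryll--Nardzewski fixed point $\psi\in\overline{\mathrm{co}}(\mathcal{O}_{\varphi})$ and then to conclude via Theorem~\ref{thm: characterization of finite type}. In case \textbf{(iib)}, weak relative compactness of $\mathcal{O}_{\varphi}$ together with Krein's theorem gives weak compactness of the weakly closed convex hull; since $\beta$ acts by affine isometries it is noncontracting, so the Ryll--Nardzewski theorem produces a $\beta$-fixed point $\psi$ in this hull. In case \textbf{(iia)}, the plan is to combine norm-separability with the equicontinuity already established in (i) to extract weak relative compactness of $\mathcal{O}_{\varphi}$, thereby reducing to (iib); concretely, one works inside the separable Banach subspace $Y\subset C_{b}(G)$ generated by $\overline{\mathcal{O}_{\varphi}}^{p}$ and uses an Eberlein--\v{S}mulian / Grothendieck type argument to identify pointwise compactness with weak compactness on this uniformly bounded, equicontinuous set. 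I expect \textbf{this bridge from pointwise to weak compactness is the main obstacle}, and it is precisely where the norm-separability hypothesis of (iia) is consumed.

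It remains to verify that the fixed point $\psi$ meets the hypotheses of Theorem~\ref{thm: characterization of finite type}. As a pointwise limit of convex combinations of continuous normalized positive definite functions $\beta_{g}(\varphi)$, $\psi$ is positive definite with $\psi(e)=1$, is continuous by equicontinuity of $\overline{\mathrm{co}}(\mathcal{O}_{\varphi})$ (inherited from $\mathcal{O}_{\varphi}$ by convex combination and pointwise passage), and is conjugation-invariant by $\beta$-invariance, so it is a character. For the neighborhood-basis property the SIN hypothesis does the crucial work: since $\varphi$ generates the topology, for every $\delta>0$ there is $c>0$ with $d(h,e)\ge\delta\Rightarrow 1-\mathrm{Re}\,\varphi(h)\ge c$, and bi-invariance then yields $1-\mathrm{Re}\,\beta_{g}(\varphi)(h)\ge c$ for every $g\in G$. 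Since $1-\mathrm{Re}\,\beta_{g}(\varphi)(h)\ge 0$, this lower bound survives convex combinations and pointwise limits, so $|1-\psi(h)|\ge 1-\mathrm{Re}\,\psi(h)\ge c$ whenever $d(h,e)\ge\delta$. Together with continuity of $\psi$ at $e$, this shows that the sets $\{h:|1-\psi(h)|<\eta\}$ form a neighborhood basis of $e$, so $G$ is of finite type.
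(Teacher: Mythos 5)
The paper states this theorem without proof (it is quoted from unpublished work of the first two authors), so there is no written proof here to compare against; the introduction only sketches the strategy for case (iib): apply Ryll--Nardzewski to the affine isometric action $\beta$ and feed the fixed point into Theorem \ref{thm: characterization of finite type}. Your part (i) (uniform equicontinuity of $\mathcal{O}_{\varphi}$ from the GNS inequality plus bi-invariance of $d$, then Ascoli), your part (iib) (Krein's theorem, Ryll--Nardzewski for the norm-distal isometric action), and your verification that the fixed point $\psi$ is a character generating a neighborhood basis of $e$ (via the lower bound $1-\mathrm{Re}\,\beta_g(\varphi)(h)\ge c$ whenever $d(h,e)\ge\delta$, which is uniform in $g$ by bi-invariance and survives convex combinations and pointwise limits) are all correct and consistent with that sketch.

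The gap is exactly where you flagged it: the bridge from (iia) to (iib). The implication ``uniformly bounded $+$ equicontinuous $+$ relatively pointwise compact $+$ norm-separable pointwise closure $\Rightarrow$ relatively weakly compact in $C_b(G)$'' is false as a general functional-analytic statement, and your argument uses nothing beyond these hypotheses. Take the metric space $\mathbb{N}$ (the group structure plays no role in the claim being invoked) and $f_n=1_{[n,\infty)}\in C_b(\mathbb{N})=\ell^\infty$: the set $\{f_n;\,n\in\mathbb{N}\}\cup\{0\}$ is bounded, equicontinuous, pointwise compact, and countable, hence has norm-separable pointwise closure, yet Grothendieck's double-limit criterion fails, since $\lim_n\lim_m f_n(m)=1\neq 0=\lim_m\lim_n f_n(m)$; so it is not relatively weakly compact. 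The obstruction is that weak compactness in $C_b(G)=C(\beta G)$ requires control of pointwise limits on all of $\beta G$, which neither equicontinuity nor norm-separability supplies. A route for (iia) that does not pass through weak compactness at all: norm-closed balls of $C_b(G)$ are closed in the topology $\tau_p$ of pointwise convergence, so a $\tau_p$-compact norm-separable set is norm-fragmented by a Baire category argument; one can then run Namioka's proof of the Ryll--Nardzewski theorem directly on the $\tau_p$-compact convex hull inside the locally convex space $(C_b(G),\tau_p)$, with norm-fragmentability replacing weak compactness and norm-distality of the isometries $\beta_g$ as before. As written, however, your argument for (iia) does not go through.
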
 
Two of the current authors considered the abovementioned result as an indication that the $\mathscr{U}_{\rm{fin}}$-problem may have a positive answer in general. 
However, there was no obvious means of verifying the relation between the compact open topology and the weak topology in $C_{\rm{b}}(G)$ or verifying either (iia) or (iib).   

In this paper, we show that indeed the orbit $\mathcal{O}_{\varphi}\subset C_{\rm{b}}(G)$ is not always relatively weakly compact. 
That is, we construct a family of unitarily representable SIN Polish groups that do not fall into the class $\mathscr{U}_{\rm{fin}}$. This answers Popa's abovementioned question. The construction is based on the existence of uniformly bounded non-unitarizable representations of countable discrete groups.
\begin{theorem}[Main Theorem]\label{thm: main}
Let $\Gamma$ be a countable discrete group, and let $\pi\colon \Gamma\to {\rm{GL}}(H)$ be a group homomorphism from $\Gamma$ to the group of bounded invertible operators on $H$. Let $G=H\rtimes_{\pi}\Gamma$ be the semidirect product. 
\begin{itemize}
\item[{\rm{(a)}}] 
$G$ is a unitarily representable Polish group. Moreover, the following two conditions are equivalent. 
\begin{itemize}
\item[{\rm{(i)}}] $G$ is a {\rm SIN} group. 
\item[{\rm{(ii)}}] $\pi$ is uniformly bounded. That is, $\|\pi\|:=\sup_{s\in \Gamma} \|\pi(s)\|<\infty$ holds. 
\end{itemize}
\item[{\rm{(b)}}] Assume that $\pi$ is uniformly bounded. Then the following three conditions are equivalent. 
\begin{itemize}
\item[{\rm{(i)}}] $G$ is of finite type. 
\item[{\rm{(ii)}}] There exists a character $f\colon H\to \mathbb{C}$ that generates a neighborhood basis of $0\in H$, such that $f(\pi(s)\xi)=f(\xi)$ for all $\xi\in H$ and $s\in \Gamma$. 
\item[{\rm{(iii)}}]$\pi$ is unitarizable. That is, there exists $V\in {\rm{GL}}(H)$ such that $V^{-1}\pi(s)V$ is unitary for all $s\in \Gamma$. 
\end{itemize}
\end{itemize}
\end{theorem}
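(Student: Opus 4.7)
Since $\Gamma$ is countable discrete and each $\pi(s)\in\mathrm{GL}(H)$ is norm-continuous, $G=H\rtimes_\pi\Gamma$ is a Polish group in the product topology. For unitary representability I would exhibit the continuous positive-definite function $\varphi\colon G\to\mathbb{C}$ equal to $e^{-\|\xi\|^2/2}$ on $(\xi,e)$ and $0$ otherwise. Using
\[(\xi_i,s_i)^{-1}(\xi_j,s_j) = (\pi(s_i^{-1})(\xi_j-\xi_i),\, s_i^{-1}s_j),\]
the positive-definiteness sum decomposes along the $\Gamma$-cosets of $\{s_i\}$ into Gaussian sums on $H$, each non-negative; the sublevel sets $\{\varphi>1-\epsilon\}$ generate a neighborhood basis of $(0,e)$ by discreteness of $\Gamma$, after which the standard characterization of unitary representability via a topology-generating continuous positive-definite function applies. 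For the SIN equivalence, conjugation gives $(\eta,t)(\xi,e)(\eta,t)^{-1}=(\pi(t)\xi,e)$, so the SIN condition at the identity (for every $r>0$ there exists $r'>0$ with $(\eta,t)(B(0,r')\times\{e\})(\eta,t)^{-1}\subset B(0,r)\times\{e\}$ for all $(\eta,t)$) is equivalent to $\sup_{t\in\Gamma}\|\pi(t)\|<\infty$.

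\textbf{Part (b), easy directions.} Assume $\pi$ is uniformly bounded. For (iii)$\Rightarrow$(ii), if $V\in\mathrm{GL}(H)$ unitarizes $\pi$ then $f(\xi):=e^{-\|V^{-1}\xi\|^2/2}$ is a $\pi$-invariant character on $H$ generating a neighborhood basis of $0$ (using unitarity of $V^{-1}\pi(s)V$ to get invariance, and equivalence of $\|V^{-1}\cdot\|$ with $\|\cdot\|$ to get topology-generation). For (ii)$\Leftrightarrow$(i), extending a $\pi$-invariant character $f$ on $H$ to $\tilde f\colon G\to\mathbb{C}$ by $\tilde f(\xi,s):=f(\xi)$ for $s=e$ and $0$ otherwise yields a character on $G$ generating a neighborhood basis of the identity: positive-definiteness is checked by the same $\Gamma$-coset decomposition as in (a), conjugation-invariance reduces via $(\eta,t)(\xi,e)(\eta,t)^{-1}=(\pi(t)\xi,e)$ to the $\pi$-invariance of $f$, and topology-generation exploits discreteness of $\Gamma$. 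Conversely, restricting a character on $G$ that generates the identity-neighborhood basis yields a $\pi$-invariant character on $H$ with the required property, the $\pi$-invariance coming from conjugation by $(0,s)\in G$. Theorem~\ref{thm: characterization of finite type} then translates (ii) into (i).

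\textbf{Part (b), (ii)$\Rightarrow$(iii) — the main implication.} Given $f$ as in (ii), GNS followed by the spectral theorem for the abelian von Neumann algebra generated by the GNS unitaries realizes the cyclic representation of $(H,+)$ on some $L^2(X,m)$ as multiplication operators $U_\xi = e^{iJ(\xi)}$, producing a continuous $\mathbb{R}$-linear map
\[J\colon H\to L^0(X,m).\]
The $\pi$-invariance of $f$ promotes to a measure-preserving $\Gamma$-action on $(X,m)$ making $J$ equivariant. I would then apply the \emph{equivariant} Maurey--Nikishin factorization theorem — the paper's main technical input — to produce a $\Gamma$-invariant density $h\geq 0$ with $\int h\,dm = 1$ such that $J\colon H\to L^2(X,h\,dm)$ is bounded. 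Pulling back the $L^2$ inner product yields a $\pi$-invariant positive bilinear form on $H$ bounded above by a multiple of $\langle\cdot,\cdot\rangle$; if the topology-generating hypothesis on $f$ furnishes the matching lower bound, the positive self-adjoint operator representing the new form provides the unitarizer $V$. The principal obstacle is the \emph{equivariant} Maurey--Nikishin step: classical Maurey--Nikishin produces only some density, not a $\Gamma$-invariant one, and the equivariant version is genuinely new, with uniform boundedness of $\pi$ entering as a crucial input that controls the family of candidate densities. A subsidiary challenge is extracting the lower bound on the pulled-back form from the assumption that $f$ generates the topology — only the upper bound comes for free from the factorization — which will require a quantitative inspection of how the factorization interacts with the implication ``$f(\xi_n)\to 1\Rightarrow\xi_n\to 0$''.
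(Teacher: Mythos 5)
Your overall architecture matches the paper's. Part (a) and the easy directions of (b) are proved exactly as in the paper: the extension by zero of a Gaussian positive definite function, the conjugation formula $(\eta,t)(\xi,e)(\eta,t)^{-1}=(\pi(t)\xi,e)$, and Theorem \ref{thm: characterization of finite type} do all the work, and your coset decomposition for positive definiteness is the right verification. For the main implication you correctly identify the equivariant Maurey--Nikishin factorization as the key new input. Your entry point differs only cosmetically: you go (ii)$\Rightarrow$(iii) via the GNS construction of the invariant character $f$ on the abelian group $H$, whereas the paper goes (i)$\Rightarrow$(iii) via an embedding $\alpha\colon G\to\mathcal{U}(M)$ into a ${\rm II}_1$-factor and then restricts to the abelian subalgebra $\{\alpha(\xi);\xi\in H\}''\cong L^\infty(X,m)$; since (i)$\Leftrightarrow$(ii) is already established these are interchangeable, and your version stays commutative throughout.

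The one genuine gap is the point you flag and leave conditional: the lower bound $c\|\xi\|\le\|\varphi^{1/2}J\xi\|_{L^2(X,m)}$, i.e., the equivalence of the pulled-back inner product with the original one. You suggest this needs a ``quantitative inspection'' of the factorization; it does not, and chasing constants there would lead you astray. The paper's resolution (Lemma \ref{lem: L^2homeo}) is soft: (1) because $f$ generates a neighborhood basis of $0$, the map $J\colon H\to L^0(X,m)$ is a real-linear homeomorphism onto its range for the topology of convergence in measure (your implication $f(\xi_n)\to 1\Rightarrow\xi_n\to 0$, applied to all real multiples $t\xi_n$ via Stone's theorem); (2) multiplication by the a.e.\ positive density $\varphi^{1/2}$ is a homeomorphism of $L^0(X,m)$ with inverse $M_{\varphi^{-1/2}}$, so $S=M_{\varphi^{1/2}}J$ is again a homeomorphism onto its range in $L^0$; (3) convergence in $L^2(X,m)$ implies convergence in measure, so $\|S\xi_n\|_2\to 0$ forces $S\xi_n\to 0$ in measure and hence $\xi_n\to 0$. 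The upper bound is the factorization inequality, and (3) gives continuity of $S^{-1}$, hence the two-sided norm equivalence. Two smaller omissions: the form pulled back from $L^2(X,m;\mathbb{R})$ is only $\mathbb{R}$-bilinear and must be complexified (formula (\ref{eq: complexify inner product})) before Lemma \ref{lem: Dixmier} applies; and the $\Gamma$-invariant density comes from applying the minimax argument of Proposition \ref{prop: GCRF3.1} to the convex $\Gamma$-invariant set $A'$ built from the norm $\|\xi\|_\pi=\sup_{s\in\Gamma}\|\pi(s)\xi\|$ --- this is exactly where uniform boundedness enters, as you anticipated.
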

A group $\Gamma$ is said to be {\it unitarizable} if all of its uniformly bounded representations are unitarizable. 
The problem of unitarizability has a long history. Sz.-Nagy \cite{Nagy47} showed that the group $\mathbb{Z}$ of integers is unitarizable. 
Later, this was independently generalized to amenable groups by Dixmier \cite{Dixmier50}, Nakamura--Takeda \cite{NakamuraTakeda51} and Day \cite{Day50}.
This led Dixmier to pose the question whether every unitarizable group is amenable, a question that remains open today. 
The first examples of non-unitarizable groups were found by Ehrenpreis and Mautner \cite{EhrenpreisMautner55}, who showed that SL$_2(\mathbb{R})$ is not unitarizable. For discrete groups, explicit families of uniformly bounded non-unitarizable representations of free groups were later constructed by Mantero--Zappa \cite{ManteroZappa83,ManteroZappa83-2}, Pitlyk--Szwarc \cite{PitlykSzwarc86}, and Bozejko \cite{Bozejko87}. We refer the reader to \cite{Pisier01,Pisier05,Ozawa06} for details. Because unitarizability passes to subgroups, it follows that all discrete groups containing free subgroups are non-unitarizable. We also note that examples of non-unitarizable groups not containing non-abelian free subgroups were presented by Epstein--Monod \cite{EpsteinMonod09}, Osin \cite{Osin09} and   Monod--Ozawa \cite{MonodOzawa10}. Therefore, there exist many uniformly bounded representations that are not unitarizable. The present article provides a new and unexpected connection between the study of unitarizability of groups and the understanding of the class $\mathscr{U}_{\rm{fin}}$.  

Finally, we explain the idea of the proof. 
Theorem \ref{thm: main} (a) is shown in Theorem \ref{thm: SIN iff uniformly bounded}. For (b), (i)$\Leftrightarrow$(ii) is Theorem \ref{thm: characterization of Ufin semidirect product} and (iii)$\Rightarrow $(i) is Corollary \ref{cor: unitarizable implies Ufin}. Finally, (i)$\Rightarrow$(iii) is shown in Theorem \ref{thm: keythm}. The proofs of (a) and (b) (i)$\Leftrightarrow $(ii) and (iii)$\Rightarrow$(i) rely on our characterization \cite{AM12-2} of the class $\mathscr{U}_{\rm{fin}}$ via positive definite functions. 
The most difficult step is to prove that (b) (i)$\Rightarrow $(iii). Note that our argument is different from that of Vasilescu-Zsido \cite{VasilescuZsido74}, who used the Ryll-Nardzewski Theorem, and showed that if a uniformly bounded representation $\pi$ generates a finite von Neumann algebra, then $\pi$ is unitarizable. 
From the assumption that $G=H\rtimes_{\pi}\Gamma$ is of finite type, there appears to be no obvious means to deduce the conditoin that $W^*(\pi(\Gamma))$ is a finite von Neumann algebra (observe that when $\pi$ is an irreducible unitary representation, then $H\rtimes_{\pi}\Gamma$ is of finite type, but $\pi(\Gamma)''=\mathbb{B}(H)$ is not a finite von Neumann algebra if $\dim(H)=\infty$).   
Assume that $G=H\rtimes_{\pi}\Gamma$ is of finite type. 
Then there exists $f\in \mathcal{P}(H)$ that generates the neighborhood basis of 0 and satisfies $f(\pi(s)\xi)=f(\xi)\ (s\in \Gamma, \xi\in H)$. If $\dim(H)<\infty$, then by Bochner's Theorem, there exists a probability measure $\mu$ on $H$ (regarded as a real Hilbert space) such that 
\[f(\xi)=\int_He^{ix\cdot \xi}\,\text{d}\mu(x),\ \ \ \ \ \xi\in H.\]
If $\dim(H)=\infty$, Bochner's Theorem does not hold (e.g. the positive definite function $\xi\mapsto e^{-\|\xi\|^2}$ is {\it not} the Fourier transform of a probability measure on $H$). However, there exists a cylinder set probability measure $\mu$ defined on the algebraic dual $H_a'$ of $H$, such that 
\begin{equation}
f(\xi)=\int_{H_a'}e^{i\nai{\chi}{\xi}}\,\text{d}\mu(\chi),\ \ \ \ \ \xi\in H.\label{eq: representation of f by Bochner}
\end{equation}
It can, in fact, be shown that $\mu$ is supported on a Hilbert-Schmidt extension of $H$. See \cite{Yamasaki} for details. Intuitively, one may hope that $\pi$ is unitarizable by the following argument: define 
\begin{equation*}
\nai{\xi}{\eta}_{\pi}':=\int_{H_a'}\nai{\chi}{\xi}\nai{\chi}{\eta}\,\text{d}\mu(\chi),\ \ \ \ \ \xi,\eta\in H,
\end{equation*}
which is (if well-defined) an $\mathbb{R}$-bilinear form. Because $f$ is $\pi$-invariant, its dual action $\tilde{\pi}$ of $\Gamma$ on $H_a'$ preserves $\mu$, which then implies that $\nai{\cdot}{\cdot}_{\pi}'$ is $\Gamma$-invariant: $\nai{\pi(s)\xi}{\pi(s)\eta}_{\pi}'=\nai{\xi}{\eta}_{\pi}'\ (\xi,\eta\in H,\ s\in \Gamma)$. By complexification, we obtain a $\pi(\Gamma)$-invariant inner product $\nai{\cdot}{\cdot}_{\pi}$. We may then hope that the inner product provides the same topology as the original inner product, which would imply that $\pi$ is unitarizable (see Lemma \ref{lem: Dixmier}). 
There are two difficulties in this approach: (a)  we must guarantee that second moments exist: $$\int_{H_a'}\nai{\chi}{\xi}^2\ \text{d}\mu(\chi)\stackrel{?}{<}\infty$$ and (b) we must prove that the new inner product is equivalent to the original one. It does not seem likely that these conditions hold in general. 
We remedy this by noncommutative integration theory: assume that $\alpha\colon G\to \mathcal{U}(M)$ is an embedding into the unitary group of a II$_1$-factor $M$. Then by setting $\alpha(s):=\alpha(0,s)$ and $\alpha(\xi):=\alpha(\xi,1)\ (\xi\in H,\ s\in \Gamma)$, we obtain $\alpha(s)\alpha(\xi)\alpha(s)^*=\alpha(\pi(s)\xi).$ 
Let $\widetilde{M}$ be the space of all unbounded closed operators affiliated with $M$.  
Because $t\mapsto \alpha(t\xi)$ is a strongly  continuous one-parameter unitary group for each $\xi\in H$, there exists a self-adjoint operator $T(\xi)\in \widetilde{M}_{\rm{sa}}$ such that $\alpha(t\xi)=e^{itT(\xi)}\ (t\in \mathbb{R})$. 
One can show that $T\colon H\to \widetilde{M}_{\rm{sa}}$ is an $\mathbb{R}$-linear homeomorphism onto its range. 
If we can show that $T(\xi)\in L^2(M,\tau)$ for every $\xi\in H$ (we call it the {\it $L^2$-condition}), then
\begin{equation}
\nai{\xi}{\eta}_{\pi}':=\nai{T(\xi)}{T(\eta)}_{L^2(M,\tau)},\ \ \ \ (\xi,\eta\in H)\label{eq: invariant inner product, almost}
\end{equation}
is a well-defined inner-product, which is $\Gamma$-invariant by $\alpha(s)T(\xi)\alpha(s)^*=T(\pi(s)\xi)\ (\xi\in H, s\in \Gamma)$. We can also show that the induced norm is equivalent to the original norm. By complexification, we are done by Lemma \ref{lem: Dixmier}. 
Again, the real question is whether the $L^2$-condition holds in general, a question essentially equivalent to postulating the existence of the second moments of $\mu$. However, because the von Neumann algebra $\mathcal{A}=\{\alpha(\xi);\,\xi\in H\}''$ generated by the image of $\alpha$ is commutative, we may represent it as $\mathcal{A}=L^{\infty}(X,m)$ for some probability space $(X,m)$ equipped with an $m$-preserving $\Gamma$-action $\beta$ such that the W$^*$-dynamical system  $(\widetilde{\mathcal{A}},\text{Ad}(\alpha(\ \cdot\ )))$ is equivariantly $*$-isomorphic to $(L^{0}(X,m), \beta)$. Here,  $L^0(X,m)$ is the space of all measurable maps on a probability space $(X,m)$ equipped with the topology of convergence in measure 
Because $H$ is a Hilbert space (hence of Rademacher type 2, see \cite[$\S$6]{KaltonAlbiac06}), it is known (by the so-called Maurey--Nikishin factorization \cite{Nikishin70,Maurey72}) that any continuous linear operator $T\colon H\to L^0(X,m)=\widetilde{A}$ factors through a Hilbert space. That is, there exists a bounded operator $\tilde{T}\colon H\to L^2(X,m)$ and an $m$-a.e.\ positive function $\psi\in L^{0}(X,m)$ such that the following diagram commutes: 
\[
\xymatrix{
H\ar[rr]^{T}\ar@{}[rrd]|{\circlearrowleft}\ar[dr]_{\tilde{T}} & & L^0(X,m)\\
&L^2(X,m)\ar[ur]_{M_{\psi}} &
}
\]
Here, $M_{\psi}$ is the multiplication operator by the function $\psi$. 
More precisely, there exists an $m$-a.e. positive $\varphi\in L^{\infty}(X,m)$ such that 
$\int_X \varphi(x) [T\xi](x)^2\ \text{d}m(x)<\infty$ for every $\xi\in H$. Then $\tilde{T}\xi:=M_{\varphi^{\frac{1}{2}}}\cdot T\xi$ and $\psi=\varphi^{-\frac{1}{2}}$ gives the factorization. 
This result was first essentially proved by Nikishin \cite{Nikishin70}. Nikishin's proof was later simplified by Maurey \cite{Maurey72} using the theory of Rademacher types. We refer the reader to \cite{Pisier86,KaltonAlbiac06,GCRF85} for more details. In $\S$3, we show an equivariant version of the Maurey--Nikishin factorization, in which we use a convexity argument to ensure (Theorem \ref{thm: Gamma-inv Nikisihin}) that the operator $\tilde{T}$ can be made $\Gamma$-equivariant if $T$ is assumed to be $\Gamma$-equivariant. We can then define $\nai{\cdot}{\cdot}_{\pi}'$ by the formula (\ref{eq: invariant inner product, almost}), with $T$ replaced by $\tilde{T}$. 
Because $T$ is a real linear homeomorphism onto its range, we can show that $\tilde{T}$ is also a real linear homeomorphism onto its range too. The last step is to show that on the image $\tilde{T}(H)$ of $H$ under $\tilde{T}$, the topology of the convergence in measure agrees with the $L^2$-topology (Lemma \ref{lem: L^2homeo}). This is somewhat surprising: note that these two topologies do not agree on the whole of $L^2(X,m)$. The proof is done by using the fact \cite{Beltita,AM12-1} that on $\widetilde{M}$, the strong resolvent topology and the $\tau$-measure topology of Nelson \cite{Nelson74} agree.  Once this is established, it is straightforward to show that $\nai{\cdot}{\cdot}_{\pi}'$ generates the same topology as that of the original inner product, and therefore that $\pi$ is unitarizable. 

Finally, we report that as a byproduct of the existence of an equivariant Maurey--Nikishin factorization, we find a new, useful description of positive definite functions on a Hilbert space (compare (\ref{eq: representation of f by Bochner}) with (\ref{eq: description of f})). See also the comments following Corollary \ref{cor: Cor to MaureyNikishin}.  
\begin{theorem}[Corollary \ref{cor: Cor to MaureyNikishin}] Let $H$ be a separable real Hilbert space, and let $f\in \mathcal{P}(H)$. Then there exist 
\begin{itemize}
\item[{\rm{(i)}}] a compact metrizable space $X$ and a Borel probability measure $m$ on $X$,
\item[{\rm{(ii)}}] a bounded linear operator $T\colon H\to L^2(X,m;\mathbb{R})$, and 
\item[{\rm{(iii)}}] a positive measurable function $\varphi\in L^{0}(X,m)$, 
\end{itemize}
such that 
\begin{equation}\label{eq: description of f}
f(\xi)=\int_Xe^{i\varphi(x)[T\xi](x)}\ {\rm{d}}m(x),\ \ \  \ \ \ \xi\in H.\end{equation}
Moreover:
\begin{itemize}
\item[{\rm{(iv)}}] If $f$ generates a neighborhood basis of $0$, then $T$ can be chosen to be a real-linear homeomorphism onto its range. 
\item[{\rm{(v)}}] We may arrange $T$ and $\varphi$ such that if $\mathcal{G}$ is a uniformly bounded subgroup of the group $\{u\in {\rm{GL}}(H); f(u\xi)=f(\xi)\ (\xi\in H)\}$ of invertible operators on $H$ fixing $f$ pointwise, then there exists a continuous homomorphism $\beta$ from $\mathcal{G}$ to the group ${\rm{Aut}}(X,m)$ of all $m$-preserving automorphisms of $X$ with the weak topology, such that 
\begin{equation*}
[T(u\xi)](x)=\beta(u)[T(\xi)](x),\ \ \ \varphi(\beta(u)x)=\varphi(x),\ \ \ \ \ u\in \mathcal{G},\ \xi\in H,\ m\text{-a.e.\ }x\in X.
\end{equation*} 
\end{itemize}
\end{theorem}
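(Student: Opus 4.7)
The plan is to carry out the GNS/spectral-calculus/Maurey--Nikishin pipeline already sketched in the introduction, but in full generality for an arbitrary $f\in\mathcal P(H)$. First I apply GNS to the continuous normalized positive definite function $f$ on the abelian topological group $(H,+)$ to obtain a separable complex Hilbert space $K$, a strongly continuous unitary representation $U\colon H\to\mathcal U(K)$, and a cyclic vector $\Omega\in K$ with $f(\xi)=\langle U(\xi)\Omega,\Omega\rangle$. Setting $\mathcal A:=U(H)''$, commutativity of $H$ forces $\mathcal A$ to be abelian, and cyclicity of $\Omega$ for $U(H)$ makes $\Omega$ cyclic and separating for $\mathcal A$, so $\mathcal A$ is maximal abelian. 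Separability of $K$ then lets me identify $(\mathcal A,\omega_\Omega)$ with $(L^\infty(X,m),\int\cdot\,dm)$ for a compact metrizable probability space $(X,m)$, under which $\widetilde{\mathcal A}$ is $L^0(X,m)$ with convergence in measure, and self-adjoint elements correspond to real-valued functions.

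For each $\xi\in H$ the strongly continuous one-parameter unitary group $\{U(t\xi)\}_{t\in\mathbb R}\subset\mathcal A$ yields, via Stone's theorem, a self-adjoint $T_0(\xi)\in\widetilde{\mathcal A}_{\mathrm{sa}}\cong L^0(X,m;\mathbb R)$ with $U(t\xi)=e^{itT_0(\xi)}$. Commutativity of $\mathcal A$ and the Borel functional calculus give $T_0(\xi+\eta)=T_0(\xi)+T_0(\eta)$ and $T_0(t\xi)=tT_0(\xi)$, so $T_0\colon H\to L^0(X,m;\mathbb R)$ is $\mathbb R$-linear; continuity follows from strong continuity of $U$ combined with the identification on $\widetilde{\mathcal A}$ of the strong resolvent topology with the measure topology~\cite{Beltita, AM12-1}. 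Applying the equivariant Maurey--Nikishin theorem of \S 3 (in the trivial group case, for the base formula) produces a bounded linear $T\colon H\to L^2(X,m;\mathbb R)$ and an $m$-a.e.\ positive $\varphi\in L^0(X,m)$ with $T_0(\xi)=\varphi\cdot T(\xi)$. Under $\mathcal A\cong L^\infty(X,m)$, with $\omega_\Omega$ corresponding to integration against $m$,
\begin{equation*}
f(\xi)=\langle e^{iT_0(\xi)}\Omega,\Omega\rangle=\int_X e^{iT_0(\xi)(x)}\,dm(x)=\int_X e^{i\varphi(x)[T\xi](x)}\,dm(x),
\end{equation*}
which is the representation required by (i)--(iii).

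For (iv): the hypothesis that $f$ generates a neighborhood basis of $0$ says $\xi_n\to 0$ iff $f(\xi_n)\to 1$. Using $\|U(\xi)\Omega-\Omega\|^2=2(1-\Re f(\xi))$ and cyclicity of $\Omega$, this is equivalent to $U(\xi_n)\to 1$ strongly, which by~\cite{Beltita, AM12-1} is equivalent to $T_0(\xi_n)\to 0$ in measure; so $T_0$ is a real-linear topological embedding into $L^0$. Since multiplication by the fixed $\varphi$ is a homeomorphism of $L^0(X,m)$, the identity $T_0=\varphi\cdot T$ makes $T$ a homeomorphism onto its range in the $L^0$-topology, and Lemma~\ref{lem: L^2homeo} then upgrades this to an $L^2$-homeomorphism. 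For (v): any $u\in\mathcal G$ preserves $f$, so by uniqueness of GNS it lifts to a unitary $V(u)\in\mathcal U(K)$ fixing $\Omega$ and satisfying $V(u)U(\xi)V(u)^*=U(u\xi)$; conjugation by $V(u)$ preserves $\mathcal A$ and $\omega_\Omega$, inducing an element $\beta(u)\in\operatorname{Aut}(X,m)$, with continuity of $\beta$ into the weak topology following from strong continuity of $U$ and uniform boundedness of $\mathcal G$. The intertwining rewrites as $T_0(u\xi)=\beta(u)_*T_0(\xi)$, and applying the equivariant version of Maurey--Nikishin to $T_0$ with this $\mathcal G$-action yields a factorization in which $T$ is itself $\mathcal G$-equivariant; comparing $T_0=\varphi\cdot T$ on both sides under $\beta(u)$ then forces $\varphi\circ\beta(u)=\varphi$ $m$-a.e.\ on the essential support of $T(H)$, which one arranges to be all of $X$.

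The main obstacle I anticipate lies in the fine control needed in (iv) and (v): the bounded linear $T$ produced by Maurey--Nikishin is not canonical, and one must invoke the \S 3 equivariant version in a form that simultaneously (a) delivers the homeomorphism property of $T$ (via the $L^0$-homeomorphism argument and Lemma~\ref{lem: L^2homeo}), (b) produces a $\mathcal G$-equivariant $T$, and (c) yields a multiplier $\varphi$ that is genuinely $\beta$-invariant rather than merely compatible with some equivariant factorization. The convexity/averaging step underlying the equivariant factorization in \S 3 is exactly where this last invariance must be extracted.
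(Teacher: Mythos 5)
Your proposal is correct and follows essentially the same route as the paper: the corollary is obtained there by rerunning the proof of Theorem \ref{thm: keythm} with the II$_1$-factor embedding replaced by exactly the GNS data $(K,U,\Omega)$ of $f$ that you construct, then invoking Proposition \ref{prop: trace is integration}, Stone's theorem together with the identification of the strong resolvent and measure topologies, the equivariant Maurey--Nikishin factorization, and Lemma \ref{lem: L^2homeo}. The only point worth streamlining is the $\beta$-invariance of $\varphi$ in (v): it is automatic from the construction in Theorem \ref{thm: Gamma-inv Nikisihin} (the sets $S(\varepsilon)$, and hence the multiplier built from them, are invariant by the minimax/uniform-convexity argument of Proposition \ref{prop: GCRF3.1}), so no comparison of $T_0=\varphi\cdot T$ on the essential support of $T(H)$ is needed.
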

\section{The class $\mathscr{U}_{\rm{fin}}$ and uniformly bounded representations} 
In this section we recall Popa's class $\mathscr{U}_{\rm{fin}}$ of finite type Polish groups and consider when a specific semidirect group belongs to this class. 
The results of this section are extracted from the first named author's PhD thesis, jointly proved with the second named author. 
\begin{definition}Let $G,G'$ be topological groups. 
\begin{itemize}
\item[(i)] We say that $G$ is {\it embeddable} into $G'$, if there exists a topological group isomorphism $\pi$ from $G$ onto a closed subgroup of $G'$. Such $\pi$ is called an {\it embedding}. 
\item[(ii)] We denote by $\mathcal{P}(G)$ the set of all normalized continuous positive definite functions on $G$, and by $\mathcal{P}(G)_{\rm{inv}}$ the set of all $f\in \mathcal{P}(G)$ which is conjugation-invariant, i.e.,  $f(g^{-1}xg)=f(x)$ holds for all $x,g\in G$. An element of $\mathcal{P}(G)_{\rm{inv}}$ is called a {\it character} on $G$. 
\end{itemize} 
\end{definition}
 
\begin{definition}[\cite{Po07}]
A Polish group $G$ is of {\it finite type} if $G$ is embeddable into the unitary group $\mathcal{U}(M)$ of some finite von Neumann algebra $M$ equipped with strong operator topology. The class of finite type Polish groups is denoted by $\mathscr{U}_{\rm{fin}}$. 
\end{definition}
If $G$ is a finite type Polish group, then by \cite[Lemma 2.6]{Po07}, the above $M$ may chosen to be a type II$_1$-factor with separable predual. 

\begin{definition}\label{def: SIN and Ufin}
Let $G$ be a topological group. 
\begin{itemize}
\item[(i)] $G$ is called {\it unitarily representable} if $G$ is embeddable into $\mathcal{U}(\ell^2(\mathbb N))$ equipped with strong operator topology. 
\item[(ii)] $G$ is called {\it SIN} (small invariant neighborhood) if for any neighborhood $V$ of $G$ at the identity $e$, there exists a neighborhood $U\subset V$ of $e$ which is conjugation invariant, i.e., $g^{-1}Ug=U,\ \forall g\in G$. Such $U$ is called an {\it invariant neighborhood} of $G$. 
\end{itemize}
If $G$ is metrizable, then $G$ is SIN if and only if $G$ admits a bi-invariant metric $d$ which is compatible with its topology.
\end{definition}
For more details about unitary representability, see e.g., \cite{Galindo09,Gao05,Megrelishvili02}. We will use the following characterization of the finite type Polish groups (cf. \cite{Gao05}). 
\begin{theorem}[\cite{AM12-2}]\label{thm: characterization of finite type}
For a Polish group $G$, the following conditions are equivalent.

\begin{itemize}
\item[{\rm{(i)}}] $G$ is of finite type.
\item[{\rm{(ii)}}] There exists a family $\mathcal{F}\subset \mathcal{P}(G)_{\rm{inv}}$ which 
generates a neighborhood basis of the identity $e_G$ of $G$.
That is, for each neighborhood $V$ at the identity, there are functions $f_1,\dots,f_n\in \mathcal{F}$
and open sets $\mathcal{O}_1,\dots, \mathcal{O}_n$ in $\mathbb{C}$ such that
\begin{equation*}
e_G\in\bigcap_{i=1}^{n}f_i^{-1}(\mathcal{O}_i)\subset V.
\end{equation*} 
\item[{\rm{(iii)}}] There exists $f\in \mathcal{P}(G)_{\rm{inv}}$ that generates a neighborhood basis of the identity of $G$.
\item[{\rm{(iv)}}] There exists a family $\mathcal{F}\subset \mathcal{P}(G)_{\rm{inv}}$ which  separates the identity of $G$ and closed subsets $A$ with $A\not\ni e_G$.
That is, for each closed subset $A$ with $A\not\ni e_G$, 
there exists $f\in\mathcal{F}$ such that
\begin{equation*}
\sup_{x\in A}|f(x)| < |f(e_G)|.
\end{equation*}
\item[{\rm{(v)}}] There exists $f\in \mathcal{P}(G)_{\rm{inv}}$ which separates the identity of $G$ 
and closed subsets $A$ with $A\not\ni e_G$.
\end{itemize}
 
\end{theorem}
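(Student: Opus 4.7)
The plan is to establish the cycle (i) $\Rightarrow$ (iii) $\Rightarrow$ (v) $\Rightarrow$ (i), and then to derive the equivalences (ii) $\Leftrightarrow$ (iii) and (iv) $\Leftrightarrow$ (v) by a standard countable-combination trick that is available because $G$ is Polish, hence second countable. The trivial implications (iii) $\Rightarrow$ (ii), (v) $\Rightarrow$ (iv) and (iii) $\Rightarrow$ (v) need no argument. The genuinely nontrivial steps are (i) $\Rightarrow$ (iii) and (v) $\Rightarrow$ (i).

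For (i) $\Rightarrow$ (iii), assume $G$ is embedded into $\mathcal{U}(M)$ for a finite von Neumann algebra $(M,\tau)$ with separable predual. Set $f(g) := \tau(g)$. Then $f$ is normalized with $f(e_G) = 1$, continuous (since $\tau$ is SOT-continuous on the unit ball), positive definite (the Gram matrix $[\tau(g_j^*g_i)]_{i,j}$ is the Gram matrix of $\{g_i \xi_\tau\}$ in $L^2(M,\tau)$), and $\tau$-traciality gives $f(h^{-1}gh) = f(g)$, so $f \in \mathcal{P}(G)_{\rm inv}$. The identity $\|u - 1\|_2^2 = 2 - 2\Re\tau(u)$ shows that the sets $\{g : |f(g) - 1| < \varepsilon\}$ form a neighborhood basis of $e_G$ in the SOT, which by the embedding assumption coincides with the topology of $G$.

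For (v) $\Rightarrow$ (i), apply the GNS construction to $f$ to obtain a strongly continuous unitary representation $\pi \colon G \to \mathcal{U}(\mathcal{H}_f)$ with cyclic vector $\xi_f$ satisfying $f(g) = \langle \pi(g)\xi_f,\xi_f\rangle$. The conjugation-invariance of $f$ makes $\xi_f$ a tracial vector for $M := \pi(G)''$, so $M$ is a finite von Neumann algebra with faithful normal trace $\tau(x) := \langle x\xi_f,\xi_f\rangle$ and with $f = \tau \circ \pi$. To upgrade $\pi$ to an embedding into $\mathcal{U}(M)$, I first observe that (v) actually implies (iii): given an open neighborhood $V$ of $e_G$, the separating property applied to $A := G \setminus V$ yields $\varepsilon := 1 - \sup_{x \in A}|f(x)| > 0$, and then $\{g : |f(g) - 1| < \varepsilon\} \subset V$ by the elementary estimate $|1 - f(g)| \ge 1 - |f(g)|$. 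Consequently the sets $\{g : |f(g) - 1| < \varepsilon\}$ form a neighborhood basis of $e_G$, and since these sets coincide with the $\pi$-pullbacks of the SOT-neighborhoods of $e_M$ cut down to $\pi(G)$, the map $\pi$ is an open continuous injection onto $\pi(G)$. Injectivity follows because $\pi(g) = e_M$ forces $f(g) = 1$, which by the separating property of $f$ (applied to $A = \overline{\{g\}}$) forces $g = e_G$. Finally, $\pi(G)$ is a Polish subgroup of the Polish group $\mathcal{U}(M)$, hence closed by the standard fact that Polish subgroups of Polish groups are automatically closed, completing the embedding.

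For (ii) $\Rightarrow$ (iii) and (iv) $\Rightarrow$ (v), use second countability of $G$ to extract a countable subfamily $\{f_n\}_{n \ge 1} \subset \mathcal{F}$ that still generates a neighborhood basis (respectively, still separates $e_G$ from all closed sets not containing $e_G$), and form $f := \sum_{n \ge 1} 2^{-n} f_n$. Then $f \in \mathcal{P}(G)_{\rm inv}$, and each $f_n$ can be controlled by $f$ via the nonnegativity $\Re(1 - f_n) \ge 0$ together with the pointwise bound $|1 - f_n(g)|^2 \le 4(1 - \Re f_n(g))$, which is a consequence of positive definiteness. Combining these, $|f(g) - 1| < \varepsilon$ uniformly forces $|f_n(g) - 1|$ small for any prescribed finite collection of indices, producing the required neighborhood basis or separation property.

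The main obstacle is the delicate topological part of (v) $\Rightarrow$ (i): passing from the purely separation-theoretic hypothesis on $f$ to an honest topological embedding of $G$ into $\mathcal{U}(M)$. The key observations that resolve this are the pointwise inequality $|1 - f(g)| \ge 1 - |f(g)|$ (which upgrades separation to basis generation) and the automatic closedness of Polish subgroups, which together eliminate the need for any open mapping argument.
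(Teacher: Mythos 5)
The paper does not actually prove this theorem: it is quoted from \cite{AM12-2}, so there is no internal proof to compare against, and your proposal has to be judged on its own. Your architecture --- the cycle (i)$\Rightarrow$(iii)$\Rightarrow$(v)$\Rightarrow$(i) plus the countable-summation reductions (ii)$\Rightarrow$(iii) and (iv)$\Rightarrow$(v) --- is reasonable, and the two implications you flag as hard are handled correctly: $\tau|_G$ works for (i)$\Rightarrow$(iii) via $\|u-1\|_2^2=2-2\Re\tau(u)$, and the GNS/trace-vector/Polish-subgroups-are-closed argument for (v)$\Rightarrow$(i) is complete, including the correct observation that (v) implies (iii) for the \emph{same} $f$ because $|1-f(g)|\ge 1-|f(g)|$.

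There is, however, one genuine gap: you list (iii)$\Rightarrow$(v) among the implications that ``need no argument,'' and this implication is false for the function $f$ itself. The basis property controls $|f(g)-1|$, whereas (v) requires $\sup_{x\in A}|f(x)|<1$, and a normalized positive definite function can satisfy $|f(g)|=1$ while $f(g)$ is far from $1$. Concretely, for $G=\mathbb{Z}/2\mathbb{Z}$ the nontrivial character $f$ generates a neighborhood basis of the identity, yet $\sup_{x\in\{1\}}|f(x)|=1=|f(0)|$, so this $f$ does not satisfy (v). Since (iii)$\Rightarrow$(v) is the only bridge in your scheme from the cluster $\{$(i),(ii),(iii)$\}$ to the cluster $\{$(iv),(v)$\}$, the cycle does not close as written. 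The repair is short: replace $f$ by $h:=e^{f-1}$ (or by $(1+f)/2$), which is again a normalized, continuous, conjugation-invariant positive definite function; then $|h(g)|=e^{\Re f(g)-1}$, and for a closed set $A\not\ni e_G$ the basis property of $f$ yields $\varepsilon>0$ with $|f-1|\ge\varepsilon$ on $A$, hence $1-\Re f\ge \varepsilon^2/4$ on $A$ by your own inequality $|1-f|^2\le 4(1-\Re f)$, and therefore $\sup_{x\in A}|h(x)|\le e^{-\varepsilon^2/4}<1$. With this single correction the argument is complete.
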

We start from the following characterization for the semidirect product of a SIN group by a SIN group to be SIN. Denote by $\mathscr{N}(G)$\index{$\mathscr{N}(G)$ the set of all neighborhoods of the identity in $G$} the set of all neighborhoods of identity of a topological group $G$.
\begin{theorem}\label{thm: characterization of SIN groups} Let $N,K$ be {\rm SIN}-groups. Let $\alpha\colon K\to {\rm{Aut}}(N)$ be a continuous action of $K$ on $N$. 
Then the following two conditions are necessary and sufficient for $N\rtimes_{\alpha}K$ to  be a {\rm SIN}-group.
\begin{list}{}{}
\item[{\rm{(SIN. 1)}}] For any $U\in \mathscr{N}(N)$, there exist $V\in \mathscr{N}(N)$ with $V\subset U$ such that 
\[\alpha_k(V)=V,\ \ \ \ k\in K.\]
\item[{\rm{(SIN. 2)}}] The action $\alpha$ is bounded. That is, for any $V\in \mathscr{N}(N)$, there exists $W\in \mathscr{N}(K)$ such that 
\[n^{-1}\alpha_k(n)\in V,\ \ \ \ n\in N,\ k\in W.\]
\end{list} 
\end{theorem}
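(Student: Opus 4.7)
The plan is to exploit the semidirect product conjugation formula
\[ (n_0,k_0)(n,k)(n_0,k_0)^{-1} = \bigl(n_0\,\alpha_{k_0}(n)\,\alpha_{k_0kk_0^{-1}}(n_0^{-1}),\ k_0kk_0^{-1}\bigr), \]
together with the standard equivalent formulation of SIN: $G$ is SIN iff for every $V\in\mathscr{N}(G)$ there exists $U\in\mathscr{N}(G)$ with $gUg^{-1}\subset V$ for all $g\in G$ (an invariant neighborhood is then recovered as $\bigcup_g gUg^{-1}$).

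For necessity, given $G$ SIN: for (SIN.1) and $U\in\mathscr{N}(N)$, I pick any $K_0\in\mathscr{N}(K)$ and an invariant $\tilde U\subset U\times K_0$, and look at the slice $\tilde U_0:=\{n\in N:(n,e_K)\in\tilde U\}$, which is an open neighborhood of $e_N$ contained in $U$; since $(e_N,k)(n,e_K)(e_N,k)^{-1}=(\alpha_k(n),e_K)$, invariance forces $\alpha_k(\tilde U_0)=\tilde U_0$. For (SIN.2) and $V\in\mathscr{N}(N)$, I similarly extract an invariant $\tilde U\subset V\times K_0$ and let $W:=\{k\in K:(e_N,k)\in\tilde U\}\in\mathscr{N}(K)$; since $(n,e_K)(e_N,k)(n,e_K)^{-1}=(n\alpha_k(n^{-1}),k)$, invariance yields $n\alpha_k(n^{-1})\in V$ for all $n\in N,\ k\in W$, which is (SIN.2) after the substitution $n\mapsto n^{-1}$.

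For sufficiency, the key intermediate step is a lemma: \emph{under $N$ SIN and (SIN.1), every $U\in\mathscr{N}(N)$ contains a neighborhood $W$ of $e_N$ that is simultaneously conjugation-invariant in $N$ and $\alpha_k$-invariant for every $k\in K$.} To prove it I apply (SIN.1) to obtain an $\alpha$-invariant $V\subset U$, apply $N$ SIN to $V$ to obtain a conjugation-invariant $V'\subset V$, and set $W:=\bigcap_{n\in N}nVn^{-1}$. Then $V'\subset W$ (so $W$ is a neighborhood), $W$ is $N$-conjugation-invariant by construction, and $\alpha_k(W)=W$ because the change of variables $n\mapsto\alpha_k(n)$ in the intersection, combined with $\alpha_k(V)=V$, reproduces $W$.

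With the lemma in hand, given a neighborhood $U\times V$ of $e_G$, I choose $W_0\in\mathscr{N}(N)$ with $W_0\cdot W_0\subset U$, apply the lemma to pick $W\subset W_0$ conjugation- and $\alpha$-invariant, use (SIN.2) on $W$ to obtain $V_0\in\mathscr{N}(K)$ with $n\alpha_k(n^{-1})\in W$ for all $n\in N,\ k\in V_0$, and use $K$ SIN to get $V_1\subset V\cap V_0$ conjugation-invariant in $K$. Setting $\tilde U:=W\times V_1$ and $k':=k_0kk_0^{-1}\in V_1$, the conjugation formula and the splitting
\[ n_0\,\alpha_{k_0}(n)\,\alpha_{k'}(n_0^{-1}) = \bigl(n_0\alpha_{k_0}(n)n_0^{-1}\bigr)\bigl(n_0\alpha_{k'}(n_0^{-1})\bigr) \]
place each factor in $W$ (the first by $\alpha$- and conjugation-invariance of $W$, the second by the choice of $V_0$), so the product lies in $W\cdot W\subset U$; meanwhile $k'\in V$. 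Thus $(n_0,k_0)\tilde U(n_0,k_0)^{-1}\subset U\times V$ for every $(n_0,k_0)\in G$, and $\bigcup_g g\tilde U g^{-1}$ is the desired invariant neighborhood inside $U\times V$. The main obstacle is the simultaneous-invariance lemma: naively alternating $N$ SIN and (SIN.1) tends to destroy the invariance just achieved, and the symmetrization trick $W=\bigcap_n nVn^{-1}$, whose openness is not automatic but is supplied by containing the conjugation-invariant open set $V'$, is what couples the two conditions.
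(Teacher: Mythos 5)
Your proof is correct, and both directions follow essentially the same route as the paper: the necessity argument (slicing an invariant neighborhood of $N\rtimes_\alpha K$ inside $U\times K$ along the two factors, using $(e_N,k)(n,e_K)(e_N,k)^{-1}=(\alpha_k(n),e_K)$ and $(n,e_K)(e_N,k)(n,e_K)^{-1}=(n\alpha_k(n^{-1}),k)$) and the sufficiency argument (the conjugation formula together with the splitting $n_0\alpha_{k_0}(n)\alpha_{k'}(n_0^{-1})=\bigl(n_0\alpha_{k_0}(n)n_0^{-1}\bigr)\bigl(n_0\alpha_{k'}(n_0^{-1})\bigr)$) are exactly the ones used there. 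The one genuine difference is your simultaneous-invariance lemma ($W=\bigcap_{n\in N}nVn^{-1}$, with the neighborhood property supplied by the conjugation-invariant $V'\subset W$); it is correct, but the paper sidesteps it by nesting rather than symmetrizing: it only requires the $\alpha$-invariant set $V$ to sit inside a conjugation-invariant $V'$ with $V'\cdot V'\subset V''$, and then estimates $n_0Vn_0^{-1}\cdot V\subset n_0V'n_0^{-1}\cdot V'=V'\cdot V'\subset V''$, so no single set ever needs to carry both invariances.
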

\begin{proof}We denote by $e_N\ (\text{resp.}\ e_K)$ the unit of $N\ (\text{resp.}\ K)$.

We first prove sufficiency of the two conditions. Fix an arbitrary $U\in \mathscr{N}(N\rtimes_{\alpha}K)$. By  definition of the product topology, there exist $V''\in \mathscr{N}(N)$ and $W'\in \mathscr{N}(K)$ such that $(e_N,e_K)\in V''\times W'\subset U$ holds.  Since $N$ is SIN, there exists an invariant neighborhood $V'\in \mathscr{N}(N)$ which satisfies $V'\cdot V'\subset V''$. By (SIN. 1), we may take $V\in \mathscr{N}(N)$ such that $V\subset V'$ and 
\[\alpha_k(V)=V,\ \ \ \ k\in K.\]
By (SIN. 2) and the SIN property of $K$, there exists an invariant neighborhood $W\in \mathscr{N}(K)$ satisfying $W\subset W'$ and 
\[n^{-1}\alpha_k(n)\in V,\ \ \ \ n\in N,\ k\in W.\]
Indeed, by (SIN. 2) there is $W_0\in \mathscr{N}(K)$ such that $n^{-1}\alpha_k(n)\in V$ holds for all $n\in N$ and $k\in W_0$. By the SIN property of $K$, there is an invariant neighborhood $W\in \mathscr{N}(K)$ contained in $W_0\cap W'$. It is then clear that this $W$ satisfies the above requirement. 
  
Let $(n_1,k_1)\in V\times W$ and $(n_2,k_2)\in N\rtimes_{\alpha}K$. We see that
\eqa{
(n_2,k_2)(n_1,k_1)(n_2,k_2)^{-1}&=(n_2\alpha_{k_2}(n_1),k_2k_1)(\alpha_{k_2^{-1}}(n_2^{-1}),k_2^{-1})\\
&=(n_2\alpha_{k_2}(n_1)\alpha_{k_2k_1k_2^{-1}}(n_2^{-1}),k_2k_1k_2^{-1})\\
&=\left (\{n_2\alpha_{k_2}(n_1)n_2^{-1}\}\{n_2\alpha_{k_2k_1k_2^{-1}}(n_2^{-1})\},k_2k_1k_2^{-1}\right )\\
&\in (n_2Vn_2^{-1}\cdot V)\times W\\
&\subset (n_2V'n_2^{-1}\cdot V')\times W'=(V'\cdot V')\times W'\\
&\subset U. 
}
Hence, we have $g^{-1}(V\times W)g\subset U$ for all $g\in N\rtimes_{\alpha}(K)$. This shows 
\[U':=\bigcup_{g\in N\rtimes_{\alpha}K}g^{-1}(V\times W)g\subset U\]
is an invariant neighborhood of $e_{N\rtimes_{\alpha}H}$ contained in $U$. Since $U$ is arbitrary, this shows $N\rtimes_{\alpha}K$ is a SIN-group. 

We now prove necessity of the two conditions. Suppose $N\rtimes_{\alpha}K$ is a SIN-group. Given $U\in \mathscr{N}(N)$. Then $U\times K\in \mathscr{N}(N\rtimes_{\alpha}K)$. Thus by SIN-property of $N\rtimes_{\alpha}K$ we find an invariant neighborhood $A\in \mathscr{N}(N\rtimes_{\alpha}K)$ contained in $U\times K$. Then we may find $U_0\in \mathscr{N}(N)$ and $W\in \mathscr{N}(K)$ such that $U_0\times W\subset A$. Since $A$ is invariant, we have 
\[g(U_0\times \{e_K\})g^{-1}\subset A,\ \ \ \ g\in N\rtimes_{\alpha}K.\]
Inparticular, for all $k\in K$ and $n\in U_0$, we have 
\[(e_N,k)(n,e_K)(e_N,k)^{-1}=(\alpha_k(n),e_K)\in A\subset U\times K.\]
This shows $\alpha_k(n)\in U$ for all $k\in K$ and $n\in U_0\subset U$. Then 
\[V:=\bigcup_{k\in K}\alpha_k(U_0)\subset U\]
is a neighborhood of $e_N$ satisfying $\alpha_k(V)=V$ for all $k\in K$. Hence we conclude that (SIN. 1) holds. 

In order to prove (SIN. 2), suppose $V\in \mathscr{N}(N)$ is given. Again by the same argument as above, we may find an invariant neighborhood $A\in \mathscr{N}(N\rtimes_{\alpha}K)$ contained in $V\times K$ and $V_0\in \mathscr{N}(N)$ and $W\in \mathscr{N}(K)$ with $V_0\times W\subset A\subset V\times K$. Since $A$ is invariant, we have in particular
\[g(\{e_N\}\times W)g^{-1}\subset A\subset V\times K,\ \ \ \ g\in N\rtimes_{\alpha}K.\]
Hence for $n\in N$ and $k\in W$, we have  
\[(n,e_K)(e_N,k)(n,e_K)^{-1}=(n\alpha_k(n^{-1}),k)\in A\subset V\times K.\]
This shows $n\alpha_k(n^{-1})\in V$ for all $k\in W$. This proves (SIN. 2).  
\end{proof}
\begin{remark}\label{rem: SIN 2 is automatic for discrete group action}
A semidirect product $G\rtimes_{\alpha}\Gamma$ of a SIN group $G$ by a discrete group $\Gamma$ always satisfies (SIN. 2) (take $W=\{e_K\}$).
\end{remark}
As an immediate corollary to Theorem \ref{thm: characterization of SIN groups}, we obtain: 
\begin{theorem}\label{thm: SIN iff uniformly bounded}
Let $\Gamma$ be a countable discrete group, $H$ be a separable Hilbert space and $\pi\colon \Gamma\rightarrow {\rm{GL}}(H)$ be an invertible representation of $\Gamma$.
Then the semidirect product group $G:=H\rtimes_{\pi}\Gamma$ is unitarily representable. Moreover, the following conditions are equivalent.
\begin{itemize}
\item[{\rm{(1)}}]  $G$ is {\rm SIN}.
\item[{\rm{(2)}}] $\pi$ is uniformly bounded, i.e., $\|\pi\|:=\sup_{s\in \Gamma}\|\pi(s)\|<\infty$.
\end{itemize}
\end{theorem}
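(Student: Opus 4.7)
The plan is to establish unitary representability of $G$ first, and then derive the SIN characterization from Theorem~\ref{thm: characterization of SIN groups} combined with Remark~\ref{rem: SIN 2 is automatic for discrete group action}.

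For unitary representability, I would invoke the characterization (see \cite{Gao05}) that a Polish group is unitarily representable if and only if it admits a continuous normalized positive definite function whose level sets near $1$ form a neighborhood basis of the identity. The natural candidate on $G$ is
\[
\varphi(\xi, s) := \delta_{s, e_\Gamma}\,e^{-\|\xi\|^2/2},\qquad (\xi, s) \in G,
\]
which I would realize as a matrix coefficient of the unitary representation of $G$ obtained by inducing from the Gaussian (Weyl) representation of the normal subgroup $H$. Concretely, let $(X, \mu)$ be a Gaussian realization of $H$, so that $H \hookrightarrow L^2(X, \mu; \mathbb{R})$, $\xi \mapsto X_\xi$, as centered Gaussians with $\int_X e^{iX_\xi}\,d\mu = e^{-\|\xi\|^2/2}$. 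Let $U(\xi)$ denote multiplication by $e^{iX_\xi}$ on $L^2(X,\mu)$, and on $\ell^2(\Gamma, L^2(X,\mu))$ define
\[
(\rho(\xi, t)f)(s) := U(\pi(s^{-1})\xi)\,f(t^{-1}s).
\]
A short computation using additivity of $U$ and the homomorphism property of $\pi$ shows $\rho$ is a strongly continuous unitary representation of $G$, and the matrix coefficient at the cyclic vector $\Omega := \delta_{e_\Gamma}\otimes 1$ evaluates to $\varphi$. Since $\Gamma$ is discrete and $\xi\mapsto e^{-\|\xi\|^2/2}$ generates the topology of $H$ at $0$, the level sets of $\varphi$ near $1$ form a neighborhood basis of $(0, e_\Gamma) \in G$.

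For the equivalence $(1)\Leftrightarrow(2)$, I apply Theorem~\ref{thm: characterization of SIN groups} with $N = H$, $K = \Gamma$ and $\alpha = \pi$. By Remark~\ref{rem: SIN 2 is automatic for discrete group action}, condition (SIN.2) is automatic when $\Gamma$ is discrete, so the SIN property reduces to (SIN.1): every $U \in \mathscr{N}(H)$ contains a $\pi$-invariant neighborhood of $0$. If $\pi$ is uniformly bounded with $C := \|\pi\| < \infty$, then given an open ball $B_r \subset U$ I set $V := \bigcup_{s\in\Gamma} \pi(s) B_{r/C}$; this is open, contained in $B_r \subset U$ (since $\|\pi(s)\xi\| < r$ whenever $\|\xi\| < r/C$), and $\pi$-invariant by reindexing over $\Gamma$. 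Conversely, if (SIN.1) holds, then applying it to $U = B_r$ gives a $\pi$-invariant neighborhood $V \subset B_r$ of $0$; choosing $\delta > 0$ with $B_\delta \subset V$ yields $\pi(s) B_\delta \subset V \subset B_r$ for all $s$, hence $\|\pi(s)\| \le r/\delta$ uniformly in $s$.

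The main obstacle is the unitary representability claim, which requires the concrete Gaussian-induced construction above (or, equivalently, a direct verification that the kernel $((\xi,s),(\eta,t))\mapsto \varphi((\eta,t)^{-1}(\xi,s))$ is positive semidefinite). Once this is in hand, the SIN equivalence follows immediately from Theorem~\ref{thm: characterization of SIN groups} and Remark~\ref{rem: SIN 2 is automatic for discrete group action}.
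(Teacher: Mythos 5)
Your proof is correct and follows essentially the same route as the paper: the paper also extends the Gaussian positive definite function $e^{-\|\xi\|^2}$ by zero off $H\times\{e\}$, cites \cite[Theorem 2.1]{Gao05} for unitary representability, and proves the SIN equivalence via Theorem~\ref{thm: characterization of SIN groups}, Remark~\ref{rem: SIN 2 is automatic for discrete group action}, and the identical two ball arguments. Your induced Gaussian representation merely supplies the positive-definiteness verification that the paper dismisses as ``immediate.''
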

\begin{proof}
The unitarily representable group $H$ has a positive definite function $f(\xi):=e^{-\|\xi\|^2},\ \xi \in H$ and identifying $H$ with $H\times \{e\}\subset H\rtimes_{\pi}\Gamma$, we may extend $f$ to be a continuous positive definite function on $H\rtimes_{\pi}\Gamma$ by taking value 0 outside $H\times \{e\}$. It is immediate to see that this extended function still generates the neighborhood basis of the identity. Then by \cite[Theorem 2.1]{Gao05}, $G$ is unitarily representable.\\ 
(1)$\Rightarrow $(2) Assume that $G$ is a unitarily representable and SIN group. Let $B_{\delta}(0)$ be the open $\delta$-neighborhood of $H$ at 0\ $(\delta>0)$. Then by (SIN. 1), there exists a neighborhood $V\subset B_{\delta}(0)$ of $H$ at 0 such that $\pi(s)V=V,\ (s\in \Gamma)$.  Since $V$ is a neighborhood, there exists $\varepsilon>0$ such that $B_{\varepsilon}(0)\subset V$ holds. Thus $\pi(s)B_{\varepsilon}(0)\subset B_{\delta}(0)$ holds for all $s\in \Gamma$. This shows that $\|\pi(s)\|\le \frac{\delta}{\varepsilon}$ for every $s\in \Gamma$. Therefore $\pi$ is uniformly bounded.
\\
(2)$\Rightarrow $(1)  Assume that $\pi$ is uniformly bounded. To prove that $G$ is SIN, by Theorem \ref{thm: characterization of SIN groups} we have only to check (SIN.1) (see Remark \ref{rem: SIN 2 is automatic for discrete group action}). 
Consider $B_{\varepsilon}(0)\ (\varepsilon>0)$, and $C:=1+\sup_{s\in\Gamma}\|\pi(s)\|<\infty$. 
Then for all $\xi\in B_{\varepsilon/C}(0)$, $s\in\Gamma$, we have 
\[
\|\pi(s)\xi\| \leq C\|\xi\| < \varepsilon.
\]
This implies $\pi(s)B_{\varepsilon/C}(0)\subset B_{\varepsilon}(0)$
for all $s\in \Gamma$. Then $V:=\bigcup_{s\in \Gamma}\pi(s)B_{\varepsilon/C}(0)$ satisfies  (SIN.1).
\end{proof}
\begin{theorem}\label{thm: characterization of Ufin semidirect product}
Let $\Gamma$ be a countable discrete group, $H$ be a separable Hilbert space and $\pi:\Gamma\rightarrow \mathbb{B}(H)$ be a uniformly bounded\index{uniformly bounded representation} representation of $\Gamma$.
Then $H\rtimes_{\pi}\Gamma$ is of finite type if and only if
there exists $f\in \mathcal{P}(H)$ such that
\begin{itemize}
\item[{\rm{(F.1)}}] $f(\pi(s)\xi) = f(\xi),\ \ \ \ 
\forall s\in\Gamma,\ \xi\in H$,

\item[{\rm{(F.2)}}] $f$ generates a neighborhood basis of $0\in H$.
\end{itemize}
\end{theorem}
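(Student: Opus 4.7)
The strategy is to use Theorem \ref{thm: characterization of finite type}(iii) as a bridge: $G$ is of finite type iff it admits a character generating a neighborhood basis of $e_G$. The key observation is that since $\Gamma$ is discrete, $H\times\{e_\Gamma\}$ is clopen in $G=H\rtimes_\pi\Gamma$, so the correspondence
\[
F(\xi,s)=f(\xi)\cdot \mathbf{1}_{\{s=e_\Gamma\}},\qquad f(\xi)=F(\xi,e_\Gamma),
\]
transports characters on $G$ to $\pi$-invariant positive definite functions on $H$ and back, with continuity preserved.

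For the forward implication, assume $G\in\mathscr{U}_{\rm{fin}}$ and, via Theorem \ref{thm: characterization of finite type}, pick $F\in\mathcal{P}(G)_{\rm{inv}}$ generating a neighborhood basis of $e_G$. Set $f(\xi):=F(\xi,e_\Gamma)\in\mathcal{P}(H)$, which is positive definite as a restriction to the closed subgroup $H\times\{e_\Gamma\}$. The identity $(0,s)(\xi,e_\Gamma)(0,s)^{-1}=(\pi(s)\xi,e_\Gamma)$ together with conjugation-invariance of $F$ yields (F.1). For (F.2), given a neighborhood $V$ of $0\in H$, the product $V\times\{e_\Gamma\}$ is a neighborhood of $e_G$, so there exist opens $\mathcal{O}_1,\dots,\mathcal{O}_n\subset\mathbb{C}$ with $e_G\in\bigcap_i F^{-1}(\mathcal{O}_i)\subset V\times\{e_\Gamma\}$, which restricts on the first coordinate to $0\in\bigcap_i f^{-1}(\mathcal{O}_i)\subset V$.

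For the converse, suppose $f\in\mathcal{P}(H)$ satisfies (F.1) and (F.2), and define $F$ by the formula above. The plan is to verify that $F$ is a character generating a neighborhood basis of $e_G$ and then appeal to Theorem \ref{thm: characterization of finite type}. Continuity of $F$ and $F(e_G)=1$ are immediate from discreteness of $\Gamma$ and normalization of $f$. For positive definiteness, the product $(\xi_j,s_j)^{-1}(\xi_i,s_i)=(\pi(s_j^{-1})(\xi_i-\xi_j),\,s_j^{-1}s_i)$ lies in $H\times\{e_\Gamma\}$ only when $s_i=s_j$, so the sum $\sum_{i,j}c_i\overline{c_j}F(g_j^{-1}g_i)$ splits as a sum over $s\in\Gamma$ of blocks
\[
\sum_{s_i=s_j=s}c_i\overline{c_j}\,f\bigl(\pi(s^{-1})(\xi_i-\xi_j)\bigr)=\sum_{s_i=s_j=s}c_i\overline{c_j}\,f(\xi_i-\xi_j)\ge 0,
\]
using (F.1) and positive definiteness of $f$. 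Conjugation-invariance follows from the elementary computation $(\eta,t)(\xi,s)(\eta,t)^{-1}=(\eta+\pi(t)\xi-\pi(tst^{-1})\eta,\,tst^{-1})$: when $s\ne e_\Gamma$ both sides of $F$ vanish, and when $s=e_\Gamma$ the first coordinate simplifies to $\pi(t)\xi$ and (F.1) closes the check.

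It remains to see that $F$ generates a neighborhood basis of $e_G$. Given any neighborhood $U$ of $e_G$, discreteness of $\Gamma$ provides $V\in\mathscr{N}(H)$ with $V\times\{e_\Gamma\}\subset U$, and (F.2) produces opens $\mathcal{O}_1,\dots,\mathcal{O}_n\subset\mathbb{C}$ with $0\in\bigcap_i f^{-1}(\mathcal{O}_i)\subset V$. The only mildly delicate point is that the preimages $F^{-1}(\mathcal{O}_i)$ must remain inside $H\times\{e_\Gamma\}$, which fails if any $\mathcal{O}_i$ contains $0\in\mathbb{C}$. This is remedied by replacing each $\mathcal{O}_i$ with $\mathcal{O}_i\cap\{z\in\mathbb{C}:|z|>1/2\}$, which still contains $f(0)=1$ but excludes $0$. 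With this adjustment, $\bigcap_i F^{-1}(\mathcal{O}_i)=\bigl(\bigcap_i f^{-1}(\mathcal{O}_i)\bigr)\times\{e_\Gamma\}\subset U$ with $e_G$ inside, completing the verification and hence the proof via Theorem \ref{thm: characterization of finite type}. I expect no serious obstacle; the argument is essentially bookkeeping around the interaction between $F$, the clopen subgroup $H\times\{e_\Gamma\}$, and the $\pi$-invariance condition, with the shrinking trick for the $\mathcal{O}_i$ being the only step that requires a moment's thought.
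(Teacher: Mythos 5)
Your proposal is correct and follows essentially the same route as the paper: restrict a generating character of $G$ to the clopen subgroup $H\times\{e_\Gamma\}$ in one direction, extend $f$ by zero in the other, and invoke Theorem \ref{thm: characterization of finite type} together with the conjugation formula $(\eta,t)(\xi,e_\Gamma)(\eta,t)^{-1}=(\pi(t)\xi,e_\Gamma)$. The paper leaves the positive definiteness and neighborhood-basis checks as ``easy to see''; your block decomposition over $s\in\Gamma$ and the shrinking of the $\mathcal{O}_i$ away from $0$ are correct ways to fill in those details.
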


\begin{proof}
We first prove ``if'' part.
For this, we assume that there exists a positive, continuous positive definite function $f$ on $H$ which satisfies (F.1) and (F.2).
Then one can extend $f$ on $H\rtimes_{\pi}\Gamma$ by
\[
\tilde{f}(\xi,s) := 
\begin{cases}
\ \ \ \ f(\xi) & (s=1), \\
\ \ \ \ 0 & (s\not= 1).
\end{cases}
\]
It is easy to see that $\tilde{f}$ is continuous, positive definite and
generates a neighborhood basis of the identity.
Take arbitrary $\xi,\eta\in H$, $s,t\in\Gamma$.
Since
\[
(\eta,t)(\xi,s)(\eta,t)^{-1}
=\left(\eta+\pi(t)\xi-\pi(tst^{-1})\eta,tst^{-1}\right),
\]
we have
\begin{align*}
\tilde{f}\left((\eta,t)(\xi,s)(\eta,t)^{-1}\right)
&= \begin{cases}
\ \ \ \ \tilde{f}\left(\pi(t)\xi,1\right) & (s=1), \\
\ \ \ \ 0 & (s\not= 1).
\end{cases}\\
&= \begin{cases}
\ \ \ \ f(\xi) & (s=1), \\
\ \ \ \ 0 & (s\not= 1).
\end{cases}\\
&=\tilde{f}(\xi,s).
\end{align*}
Hence $H\rtimes_{\pi}\Gamma$ is of finite type.

Conversely, let us assume that $H\rtimes_{\pi}\Gamma$ is of finite type.
Then there exists a positive, continuous positive definite class function $\tilde{f}$ on $H\rtimes_{\pi}\Gamma$ that generates a neighborhood basis of the identity.
We define a positive definite function $f$ on $H$ by
\[
f(\xi) := \tilde{f}(\xi,1), \ \ \ \ \xi\in H.
\]
It is easy to check that $f$ is positive, continuous and satisfies (F.2).
Since
\[
(\eta,t)(\xi,1)(\eta,t)^{-1}
=\left(\pi(t)\xi,1\right)
\]
and $\tilde{f}$ is a class function,
we obtain
\begin{align*}
f(\xi) &= \tilde{f}(\xi,1) 
= \tilde{f}\left((\eta,t)(\xi,1)(\eta,t)^{-1}\right)\\
&= \tilde{f}\left(\pi(t)\xi,1\right)
= f\left(\pi(t)\xi\right)
\end{align*}
so that $f$ satisfies (F.1).
\end{proof}

\begin{corollary}\label{cor: unitarizable implies Ufin}
Let $\Gamma$ be a countable discrete group, $H$ be a separable Hilbert space and $\pi:\Gamma\rightarrow \mathbb{B}(H)$ be a uniformly bounded representation of $\Gamma$.
If $\pi$ is unitarizable, then $H\rtimes_{\pi}\Gamma$ is of finite type.
\end{corollary}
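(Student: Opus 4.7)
The plan is to produce an explicit positive definite function $f$ on $H$ satisfying conditions (F.1) and (F.2) of Theorem \ref{thm: characterization of Ufin semidirect product}, and then invoke that theorem. By unitarizability, fix $V\in{\rm GL}(H)$ such that $\pi_0(s):=V^{-1}\pi(s)V$ is unitary for all $s\in\Gamma$. The natural candidate is
\[
f(\xi):=e^{-\|V^{-1}\xi\|^2},\qquad \xi\in H,
\]
i.e., the standard Gaussian positive definite function, but pulled back through the similarity $V^{-1}$ so as to incorporate the new, $\pi$-invariant inner product $\langle\xi,\eta\rangle_V:=\langle V^{-1}\xi,V^{-1}\eta\rangle$.

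First I would verify that $f\in\mathcal{P}(H)$. The Gaussian $g(\eta):=e^{-\|\eta\|^2}$ is a well-known element of $\mathcal{P}(H)$ (it is continuous, $g(0)=1$, and positive definite on $(H,+)$). Since $V^{-1}\in{\rm GL}(H)$ is an additive homomorphism and $f=g\circ V^{-1}$, positive definiteness, normalization, and continuity of $f$ all follow directly.

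Next, the $\pi$-invariance (F.1) is the whole point of the construction: for every $s\in\Gamma$ and $\xi\in H$,
\[
f(\pi(s)\xi)=e^{-\|V^{-1}\pi(s)\xi\|^2}=e^{-\|\pi_0(s)V^{-1}\xi\|^2}=e^{-\|V^{-1}\xi\|^2}=f(\xi),
\]
using that $\pi_0(s)$ is unitary. For (F.2), $V^{-1}$ is a linear homeomorphism of $H$ onto itself, and $g$ generates a neighborhood basis of $0\in H$ (the sets $\{\eta:|g(\eta)-1|<\varepsilon\}$ are the open balls around $0$); hence its composition with $V^{-1}$ does as well. Applying Theorem \ref{thm: characterization of Ufin semidirect product} to this $f$ then yields that $H\rtimes_\pi\Gamma$ is of finite type.

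There is essentially no obstacle here: the unitarizability hypothesis converts directly into an invariant Hilbertian norm on $H$, which in turn supplies a $\pi$-invariant Gaussian character generating the topology; the hard direction of the characterization, Theorem \ref{thm: characterization of Ufin semidirect product}, does all the remaining work. (The genuine difficulty, namely the converse (i)$\Rightarrow$(iii) of Theorem \ref{thm: main}(b), is precisely the content of the equivariant Maurey--Nikishin argument reserved for Theorem \ref{thm: keythm}.)
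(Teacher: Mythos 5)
Your proposal is correct and is essentially identical to the paper's own proof: the paper takes $f(\xi)=e^{-\|T\xi\|^2}$ with $T\pi(s)T^{-1}$ unitary, which is your construction with $T=V^{-1}$, and likewise concludes via Theorem \ref{thm: characterization of Ufin semidirect product}.
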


\begin{proof}
Since $\pi$ is unitarizable\index{unitarizable}, there exists an invertible operator $T\in\mathbb{B}(H)$ such that $T\pi(s)T^{-1}$ is unitary for every $s\in\Gamma$.
Define a positive definite function $f$ on $H$ by 
\[
f(\xi) := e^{-\|T\xi\|^2}, \ \ \ \ \xi\in H.
\]
Then $f$ is positive, continuous and satisfies (F.2).
Moreover
\[
f(\pi(s)\xi) = e^{-\|T\pi(s)\xi\|^2} 
= e^{-\left\|\left\{T\pi(s)T^{-1}\right\}T\xi\right\|^2}
= e^{-\|T\xi\|^2}
= f(\xi)
\]
so that (F.1) holds.
\end{proof}

Since amenable groups are unitarizable (\cite{Day50,Dixmier50,NakamuraTakeda51}), we obtain: 
\begin{corollary}\label{cor: amenable case}
Let $\Gamma$ be a countable discrete group, $H$ be a separable Hilbert space and $\pi:\Gamma\rightarrow \mathbb{B}(H)$ be a uniformly bounded representation of $\Gamma$.
Assume that $\Gamma$ is amenable.
Then $H\rtimes_{\pi}\Gamma$ is of finite type.
\end{corollary}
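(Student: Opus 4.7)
The plan is to invoke the classical theorem of Day, Dixmier, and Nakamura--Takeda, which asserts that every amenable group is unitarizable: every uniformly bounded representation of an amenable group on a Hilbert space is similar to a unitary representation. Once this is in hand, the conclusion is immediate from the previous corollary.

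More concretely, first I would note that by the amenability of $\Gamma$ together with \cite{Day50,Dixmier50,NakamuraTakeda51}, the uniformly bounded representation $\pi\colon \Gamma\to \mathbb{B}(H)$ is unitarizable. That is, there exists $T\in \mathrm{GL}(H)$ such that $T\pi(s)T^{-1}$ is unitary for every $s\in\Gamma$. Then I would simply apply Corollary \ref{cor: unitarizable implies Ufin} to conclude that $H\rtimes_{\pi}\Gamma$ is of finite type.

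There is no real obstacle here: the entire content of the corollary lies in combining the classical unitarizability result for amenable groups with the implication (iii)$\Rightarrow$(i) of Theorem \ref{thm: main}(b), which has just been established in Corollary \ref{cor: unitarizable implies Ufin}. In particular, the explicit character witnessing finite type can be taken to be $f(\xi)=e^{-\|T\xi\|^2}$, exactly as in the proof of Corollary \ref{cor: unitarizable implies Ufin}.
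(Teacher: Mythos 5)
Your proof is correct and is exactly the paper's argument: the paper derives this corollary by noting that amenable groups are unitarizable (Day, Dixmier, Nakamura--Takeda) and then applying Corollary \ref{cor: unitarizable implies Ufin}. Nothing further is needed.
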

\begin{remark}
We remark that if $\Gamma$ is amenable, then so is $H\rtimes_{\pi}\Gamma$, so that the conclusion of Corollary \ref{cor: amenable case} also follows from the fact \cite[Theorem 2.2]{AM12-2} that an amenable unitarily representable SIN group is of finite type. 
\end{remark}
The purpose of the next section is to prove a converse to Corollary \ref{cor: unitarizable implies Ufin}. 
\section{Equivariant Maurey--Nikishin Factorization}
In this section, we prove the last and crucial step, Theorem \ref{thm: main} (b) (i)$\Rightarrow $(iii). 
Let $\pi\colon \Gamma \to H$ be a uniformly bounded representation of a discrete countable group $\Gamma$ on a separable Hilbert space $H$, as before.  
\begin{theorem}\label{thm: keythm} 
If $G=H\rtimes_{\pi}\Gamma$ is of finite type, then $\pi$ is unitarizable. 
\end{theorem}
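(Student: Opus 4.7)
The plan is to construct a $\pi(\Gamma)$-invariant inner product on $H$ equivalent to the original Hilbert inner product; by Lemma \ref{lem: Dixmier}, this suffices for unitarizability. Fix an embedding $\alpha\colon G=H\rtimes_{\pi}\Gamma \hookrightarrow \mathcal{U}(M)$ into the unitary group of a II$_{1}$-factor $(M,\tau)$, which exists by the definition of finite type together with \cite[Lemma 2.6]{Po07}. Write $\alpha(\xi):=\alpha(\xi,e_{\Gamma})$ and $\alpha(s):=\alpha(0,s)$, so that the semidirect product relation $\alpha(s)\alpha(\xi)\alpha(s)^{*}=\alpha(\pi(s)\xi)$ holds for all $\xi\in H$, $s\in\Gamma$.

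For each $\xi\in H$ the one-parameter group $t\mapsto \alpha(t\xi)$ is strongly continuous in $M$, so Stone's theorem yields self-adjoint operators $T(\xi)\in\widetilde{M}_{\mathrm{sa}}$ with $\alpha(t\xi)=e^{itT(\xi)}$. Using the coincidence of the strong resolvent topology with Nelson's measure topology on $\widetilde{M}$ (cf.\ \cite{Beltita,AM12-1}), one checks that $T\colon H\to\widetilde{M}_{\mathrm{sa}}$ is $\mathbb{R}$-linear, continuous, a homeomorphism onto its range, and $\Gamma$-equivariant: $\alpha(s)T(\xi)\alpha(s)^{*}=T(\pi(s)\xi)$. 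Since the $\alpha(\xi)$ pairwise commute, the von Neumann algebra $\mathcal{A}:=\{\alpha(\xi):\xi\in H\}''$ is abelian; identifying $\mathcal{A}\cong L^{\infty}(X,m)$ on a standard probability space equipped with the measure-preserving $\Gamma$-action $\beta$ coming from $\mathrm{Ad}(\alpha(s))$, we get an equivariant identification $\widetilde{\mathcal{A}}\cong L^{0}(X,m)$ and view $T$ as a continuous $\Gamma$-equivariant $\mathbb{R}$-linear map $H\to L^{0}(X,m)$.

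Now apply the equivariant Maurey--Nikishin factorization, Theorem \ref{thm: Gamma-inv Nikisihin}, to obtain a bounded $\Gamma$-equivariant operator $\tilde{T}\colon H\to L^{2}(X,m;\mathbb{R})$ and an $m$-a.e.\ positive $\psi\in L^{0}(X,m)$ with $T(\xi)=\psi\cdot\tilde{T}(\xi)$. Define
\[
B(\xi,\eta):=\langle \tilde{T}(\xi),\tilde{T}(\eta)\rangle_{L^{2}(X,m)},
\]
a symmetric $\mathbb{R}$-bilinear form on $H$ that is $\pi(\Gamma)$-invariant because $\tilde{T}$ is equivariant and $\beta$ preserves $m$. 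Its complexification will be the candidate $\pi(\Gamma)$-invariant inner product on $H$.

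The hardest step will be showing that $B$ is equivalent to the original inner product. Boundedness of $\tilde{T}$ yields $B(\xi,\xi)\le C\|\xi\|^{2}$. For the reverse, note that $T=M_{\psi}\tilde{T}$ and injectivity of $T$ force $\tilde{T}$ injective; if $\tilde{T}(\xi_{n})\to 0$ in $L^{2}$, then $\tilde{T}(\xi_{n})\to 0$ in measure, hence $T(\xi_{n})=\psi\cdot\tilde{T}(\xi_{n})\to 0$ in measure, and the homeomorphism property of $T$ onto its range forces $\xi_{n}\to 0$ in $H$. This shows that the continuous linear bijection $\tilde{T}\colon H\to \tilde{T}(H)\subset L^{2}(X,m)$ has sequentially continuous inverse on its image; combined with Lemma \ref{lem: L^2homeo}, which identifies on $\tilde{T}(H)$ the $L^{0}$-topology with the $L^{2}$-topology, an open-mapping argument (the image $\tilde{T}(H)$ is thereby a closed subspace of $L^{2}(X,m)$, and $\tilde{T}$ is a continuous linear bijection of Banach spaces) upgrades bicontinuity to genuine norm equivalence. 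Complexifying $B$ then produces a $\pi(\Gamma)$-invariant inner product on $H$ equivalent to the original one, and Lemma \ref{lem: Dixmier} concludes that $\pi$ is unitarizable.
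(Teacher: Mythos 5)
Your proposal is correct and follows essentially the same route as the paper: embed $G$ into $\mathcal{U}(M)$, build $T$ via Stone's theorem, pass to $L^{0}(X,m)$, apply the equivariant Maurey--Nikishin factorization, and note that your form $B(\xi,\eta)=\langle\tilde T\xi,\tilde T\eta\rangle_{L^{2}}$ is exactly the paper's $\langle\xi,\eta\rangle'_{\pi}=\int_X\varphi\,[T\xi][T\eta]\,\mathrm{d}m$, with the norm equivalence obtained from boundedness of $\tilde T$ together with the continuity of $M_{\psi}$ on $L^{0}$ and the homeomorphism property of $T$ (the content of Lemma \ref{lem: L^2homeo}), and the conclusion via complexification and Lemma \ref{lem: Dixmier}. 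The only cosmetic difference is that your sequential argument already yields the two-sided norm bounds for the linear map $\tilde T$ directly, so the appeal to an open-mapping/closed-range step is superfluous.
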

Throughout this section, we assume that $G=H\rtimes_{\pi}\Gamma$ is of finite type. Our aim is to find a $\Gamma$-invariant inner product and apply the following classical result due to Dixmier: 
\begin{lemma}[Dixmier]\label{lem: Dixmier}
Let $\pi\colon \Gamma\to \mathbb{B}(H)$ be a uniformly bounded representation. Then $\pi$ is unitarizable if and only if there exists an inner product $\nai{\cdot}{\cdot}'$ on $H$ that generates the same Hilbert space topology and which is $\pi$-invariant, i.e., $\nai{\pi(s)\xi}{\pi(s)\eta}'=\nai{\xi}{\eta}'\ (\xi,\eta\in H, s\in \Gamma)$ holds. 
\end{lemma}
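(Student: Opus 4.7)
The forward direction is immediate: if $V\in{\rm{GL}}(H)$ satisfies $V^{-1}\pi(s)V\in\mathcal{U}(H)$ for every $s\in\Gamma$, then the pulled-back inner product $\nai{\xi}{\eta}' := \nai{V^{-1}\xi}{V^{-1}\eta}$ is manifestly $\pi$-invariant, and its induced norm is equivalent to $\|\cdot\|$ because $V$ and $V^{-1}$ are both bounded. The substance of the lemma lies in the converse, and my plan is to realize the given $\pi$-invariant inner product as $\nai{A\cdot}{\cdot}$ for a bounded positive invertible operator $A$, and then take $V:=A^{-1/2}$.

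First I would observe that $\|\cdot\|'$ is automatically a complete norm on $H$, since completeness is a topological property preserved under equivalence of topologies. With both $\|\cdot\|$ and $\|\cdot\|'$ then Banach-space norms inducing the same topology, the open mapping theorem yields constants $0<c\le C$ with $c\|\xi\|\le\|\xi\|'\le C\|\xi\|$ for all $\xi\in H$. In particular, $\nai{\cdot}{\cdot}'$ is a bounded sesquilinear form on $(H,\nai{\cdot}{\cdot})$, so the Riesz representation theorem produces a unique self-adjoint operator $A\in\mathbb{B}(H)$ with $\nai{\xi}{\eta}'=\nai{A\xi}{\eta}$ for all $\xi,\eta\in H$. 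The norm bounds translate to $c^{2}\cdot I\le A\le C^{2}\cdot I$, so $A$ is strictly positive and boundedly invertible.

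Finally I would exploit $\pi$-invariance of $\nai{\cdot}{\cdot}'$, which rewrites to the operator identity $\pi(s)^{*}A\pi(s)=A$ for every $s\in\Gamma$. Setting $V:=A^{-1/2}$ via Borel functional calculus, a short computation gives
\[
(V^{-1}\pi(s)V)^{*}(V^{-1}\pi(s)V)=A^{-1/2}\pi(s)^{*}A\pi(s)A^{-1/2}=A^{-1/2}AA^{-1/2}=I,
\]
so $V^{-1}\pi(s)V$ is isometric; invertibility of $\pi(s)$ and $V$ then upgrades this to unitarity, witnessing that $\pi$ is unitarizable. I do not foresee a serious obstacle here; the only conceptual step is the square-root trick for the positive invertible operator implementing the new inner product, and once $A$ and its square root are in hand the verification is mechanical.
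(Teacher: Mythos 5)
Your argument is correct, and it is the standard proof of this classical fact; the paper itself quotes the lemma without proof (attributing it to Dixmier), so there is nothing to compare against beyond noting that your route — norm equivalence from equality of topologies, Riesz representation of the invariant form by a positive invertible $A$ with $\pi(s)^{*}A\pi(s)=A$, and conjugation by $A^{-1/2}$ — is exactly the expected one. One small simplification: you do not need completeness or the open mapping theorem to get $c\|\xi\|\le\|\xi\|'\le C\|\xi\|$; since the two norm topologies coincide, the open unit ball of either norm contains a ball of the other around $0$, and homogeneity yields the two-sided estimate directly.
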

By assumption, there exists an embedding (as a topological group) $\alpha\colon G\to \mathcal{U}(M)$, where $M$ is some II$_1$-factor with separable predual and with the normal faithtul tracial state $\tau$. We let $M$ act on $L^2(M,\tau)$.  
As usual, we identify $H\subset G\subset \mathcal{U}(M)$ and $\Gamma\subset \mathcal{U}(M)$. That is, $\xi\in H$ is identified with $\alpha(\xi)=\alpha(\xi,0)$, and $s\in \Gamma$ is identified with $\alpha(s)=\alpha(0,s)$. Therefore under this identification, the uniformly bounded $G$-action $\pi$ on $H$ is implemented by unitaries in $M$:
\begin{equation}
\alpha(\pi(s)\xi)=\alpha((0,s)(\xi,1)(0,s^{-1})=\alpha(s)\alpha(\xi)\alpha(s)^*.
\end{equation}
Now, let $\widetilde{M}$ be the space of all (possibly unbounded) closed and densely defined operators $T$ on $L^2(M,\tau)$ which are affiliated with $M$. 
There is a completely metrizable topology on $\widetilde{M}$, called the {\it $\tau$-measure topology} \cite{Nelson74}. A neighborhood basis of 0 in the $\tau$-measure topology is given by the family $\{N(\varepsilon,\delta);\varepsilon,\delta>0\}$, where 
\[N(\varepsilon,\delta):=\{A\in \widetilde{M};\ \exists p\in \text{Proj}(M), \tau(p^{\perp})<\delta,\ \|Ap^{\perp}\|<\varepsilon\},\]
where $\text{Proj}(M)$ is the lattice of all projections of $M$. 
In \cite{AM12-1}, we showed that the $\tau$-measure topology coincides with the strong resolvent topology (SRT) (the same result is obtained by Beltita \cite{Beltita} using the standard form). In particular, if $A_n,A\in \widetilde{M}$ are self-adjoint, then $A_n\stackrel{n\to \infty}{\to}A$ in the $\tau$-measure topology, if and only if $\|(A_n-i)^{-1}-(A-i)^{-1}\|_2\stackrel{n\to \infty}{\to}0$, if and only if (by the Kato-Trotter Theorem) $\|e^{itA_n}-e^{itA}\|_2\stackrel{n\to \infty}{\to} 0$ for every $t\in \mathbb{R}$ (in this case the convergence is compact uniform in $t$). As was shown by Murray-von Neumann, the space $\widetilde{M}$, although not locally convex, is actually a topological $*$-algebra with respect to the $\tau$-measure topology. The addition (resp. the multiplication) is defined by $(A,B)\mapsto \overline{A+B}$ (resp. $(A,B)\mapsto \overline{AB}$), where the bar denotes the operator closure (it asserts that $A+B$, $AB$ are always closable and in fact $\overline{A+B}$ is self-adjoint whenever $A,B$ are). 

Now let $\xi\in H$. Then since $\alpha$ is continuous, $\mathbb{R}\ni t\mapsto \alpha(t\xi)\in \mathcal{U}(M)$ is a strongly continuous one-parameter unitary group. Thus by Stone's Theorem, there exists a unique self-adjoint operator $T(\xi)\in \widetilde{M}$ such that 
\[\alpha(t\xi)=e^{itT(\xi)},\ \ \ \ \ t\in \mathbb{R},\ \xi\in H.\]

\begin{proposition}\label{prop: covariance}
The map $T\colon H\to \widetilde{M}_{\rm{sa}}$ is a real-linear homeomorphism onto its range (with respect to the $\tau$-measure topology or the {\rm{SRT}}). 
Moreover, the following covariance property holds. 
\begin{equation}
T(\pi(s)\xi)=\alpha(s)T(\xi)\alpha(s)^*,\ \ \ \ s\in \Gamma,\xi\in H.\label{eq: T(pi(g))}
\end{equation}
\end{proposition}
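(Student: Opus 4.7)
The plan is to establish the three separate assertions of the proposition --- real linearity, the covariance identity (\ref{eq: T(pi(g))}), and the homeomorphism property --- using two tools: the uniqueness part of Stone's theorem, and the characterization, recalled in the preceding paragraphs, of the $\tau$-measure topology on $\widetilde{M}_{\rm{sa}}$ by $\|e^{itA_n}-e^{itA}\|_2\to 0$ for every $t\in\mathbb{R}$. I will also use that on $\mathcal{U}(M)$ the strong operator topology agrees with the $\|\cdot\|_2$-topology, so the embedding $\alpha\colon G\to \mathcal{U}(M)$ is in particular a homeomorphism of $G$ onto its image when $\mathcal{U}(M)$ carries the $\|\cdot\|_2$-topology.

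For real linearity I would proceed in two steps. The scaling identity $T(r\xi)=rT(\xi)$ for $r\in\mathbb{R}$ is immediate from Stone uniqueness applied to the one-parameter unitary group $t\mapsto\alpha(tr\xi)$. For additivity, the key observation is that $H$ is abelian, so for all $s,t\in\mathbb{R}$,
\[
e^{isT(\xi)}e^{itT(\eta)}=\alpha(s\xi)\alpha(t\eta)=\alpha(s\xi+t\eta)=\alpha(t\eta)\alpha(s\xi)=e^{itT(\eta)}e^{isT(\xi)}.
\]
This says $T(\xi)$ and $T(\eta)$ strongly commute, so that $\overline{T(\xi)+T(\eta)}$ is self-adjoint and affiliated with $M$, and the one-parameter group it generates equals $t\mapsto e^{itT(\xi)}e^{itT(\eta)}=\alpha(t(\xi+\eta))$; a second appeal to Stone uniqueness gives $T(\xi+\eta)=\overline{T(\xi)+T(\eta)}$. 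The covariance identity then follows from the fact that conjugation by the unitary $\alpha(s)$ commutes with the continuous functional calculus: $e^{itT(\pi(s)\xi)}=\alpha(t\pi(s)\xi)=\alpha(s)\alpha(t\xi)\alpha(s)^*=e^{it\,\alpha(s)T(\xi)\alpha(s)^*}$, and Stone uniqueness concludes.

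For the homeomorphism statement, if $\xi_n\to\xi$ in $H$ then $\alpha(t\xi_n)\to\alpha(t\xi)$ in $\|\cdot\|_2$ for each $t\in\mathbb{R}$, which by the recalled characterization of the $\tau$-measure topology gives $T(\xi_n)\to T(\xi)$; hence $T$ is continuous. Conversely, if $T(\xi_n)\to T(\xi)$ in $\tau$-measure, then $\alpha(\xi_n)=e^{iT(\xi_n)}\to e^{iT(\xi)}=\alpha(\xi)$ in $\|\cdot\|_2$, and since $\alpha|_H$ is a topological embedding, $\xi_n\to\xi$ in $H$; taking $\xi=0$ also yields injectivity. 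I expect the main conceptual obstacle to be the additivity step, since for a general pair of self-adjoint operators in $\widetilde{M}_{\rm{sa}}$ the formal sum need not have a self-adjoint closure; this is precisely where the abelian group structure of $H$ enters the argument, via the strong commutativity of the generators that it provides.
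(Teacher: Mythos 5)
Your proposal is correct and follows essentially the same route as the paper: Stone uniqueness for scaling and for the covariance identity, strong commutativity of the generators (coming from the abelian group structure of $H$) to get $T(\xi+\eta)=\overline{T(\xi)+T(\eta)}$, and the characterization of SRT-convergence via $\|e^{itA_n}-e^{itA}\|_2\to 0$ together with the fact that $\alpha$ is a topological embedding to get the homeomorphism property. The only cosmetic difference is that the paper identifies the generator of $t\mapsto e^{itT(\xi)}e^{itT(\eta)}$ via the Kato--Trotter formula, whereas you invoke the standard functional-calculus fact for strongly commuting self-adjoint operators; both are valid.
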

\begin{proof}
Let $s\in \Gamma$ and $\xi_n,\xi,\eta\in H\ (n\in \mathbb{N})$. 
Since $\alpha$ is a topological isomorphism onto its range, 
\eqa{
\xi_n-\xi\stackrel{n\to \infty}{\to}0&\Leftrightarrow \alpha(t\xi_n)-\alpha(t\xi)=e^{itT(\xi_n)}-e^{itT(\xi)}\stackrel{n\to \infty}{\to} 0\ \text{\ for\ all\ }t\in \mathbb{R}\ (\text{SOT})\\
&\Leftrightarrow T(\xi_n)\stackrel{n\to \infty}{\to}  T(\xi)\ (\text{SRT}).
} 
Thus $T$ is a homeomorphism from $H$ onto its range. 
Also, for all $s,t\in \mathbb{R}$, we have
\[\alpha(ts\xi)=e^{itsT(\xi)}=\alpha(t(s\xi))=e^{itT(s\xi)}.\]
Since $t$ is arbitrary, the uniqueness part of Stone's Theorem asserts that $T(s\xi)=sT(\xi)$. 
Now since $H$ is a commutative (additive) group, all $\{T(\xi);\xi\in H\}$ are strongly commuting self-adjoint operators. Thus by Kato-Trotter Theorem, 
\eqa{
e^{it\overline{T(\xi)+T(\eta)}}&=\text{SOT}-\lim_{n\to \infty}(e^{it\frac{1}{n}T(\xi)}e^{it\frac{1}{n}T(\eta)})^{n}=e^{itT(\xi)}e^{itT(\eta)}\\
&=\alpha(t\xi)\alpha(t\eta)=\alpha(t(\xi+\eta))\\
&=e^{itT(\xi+\eta)}.
}
Therefore we obtain $T(\xi+\eta)=\overline{T(\xi)+T(\eta)}$. Therefore $T$ is a real-linear homeomorphism onto its range.
Finally, by the unitary covariance of the Borel functional calculus,  
\eqa{
e^{itT(\pi(s)\xi)}&=\alpha(t\pi(s)\xi)=\alpha(s)\alpha(t\xi)\alpha(s)^*\\
&=\alpha(s)e^{itT(\xi)}\alpha(s)^*\\
&=e^{it\alpha(s)T(\xi)\alpha(s)^*}.
}
Therefore $T(\pi(s)\xi)=\alpha(s)T(\xi)\alpha(s)^*$. 
\end{proof}

Let $\mathcal{A}:=\{\alpha(\xi);\xi\in H\}''$ be the commutative von Neumann subalgebra of $M$ generated by the image of $\alpha$. By Proposition \ref{prop: trace is integration}, there exists a probability space $(X,m)$ such that $(\mathcal{A},\tau|_{\mathcal{A}})\cong (L^{\infty}(X,m),\ \int_X \cdot\ dm)$. Then by  (\ref{eq: T(pi(g))}), the formula 
\begin{equation}
\beta_s:=\text{Ad}(\alpha(s))|_{\mathcal{A}},\ \ \ \ s\in \Gamma.
\end{equation}
defines a trace-preserving action of $\Gamma$ on $\mathcal{A}=L^{\infty}(X,m)$. This action also defines a probability measure preserving action on $X$. We denote this action by $\Gamma \times X\ni (s,x)\mapsto s\cdot x\in X$. Therefore, $\beta_s(\varphi)=\varphi\circ s^{-1}, \varphi \in L^{\infty}(X,m)$. 
Below we keep this setting. 
\begin{definition}
We denote by $L^0(X,m)$ the space of all $m$-measurable $\mathbb{C}$-valued functions on $X$, and $L^0(X,m)_+$ its positive cone. We also define the $\Gamma$-action on $L^0(X,m)$ by $\beta_s=\text{Ad}(\alpha(s))|_{L^0(X,m)}\ (s\in \Gamma)$ (we use the same symbol $\beta_s$ as the $\Gamma$-action on $L^{\infty}(X,m)$, but there would be no danger of confusion). 
\end{definition}
Equipped with the convergence in measure, $L^0(X,m)$ is a (non-locally convex) completely metrizable topological vector space. 
Note that under the identification $\mathcal{A}=L^{\infty}(X,m)$, we obtain $\widetilde{\mathcal{A}}=L^0(X,m)\subset \widetilde{M}_{\rm{sa}}$ (cf. \cite{AM12-1}). 
What we will do is to construct a $\Gamma$-equivariant Maurey--Nikishin factorization of the map $T\colon H\to L^0(X,m)\subset \widetilde{M}_{\rm{sa}}$. We almost exactly follow Maurey's argument as in \cite[Chapter VI.3]{GCRF85}, but we also need to take care of the $\Gamma$-action, which makes it necessary to refine some of the arguments.  

\begin{theorem}[Equivariant Maurey-Nikishin Factorization] \label{thm: Gamma-inv Nikisihin} 
The map $T\colon H\to L^0(X,m)$  factors $\Gamma$-equivariantly through a Hilbert space. More precisely, there exists a $\Gamma$-invariant function $\varphi\in L^{0}(X,m)$ which is $m$-a.e. positive and
\begin{equation}
\int_X \varphi (x)[T\xi](x)^2\, {\rm{d}}m(x)\le \|\xi\|^2,\ \ \ \ \ \ \xi \in H.\label{eq: Equivariant Nikishin}
\end{equation}
In particular, the following diagram commutes: 
\[
\xymatrix{
H\ar[rr]^{T}\ar@{}[rrd]|{\circlearrowleft}\ar[dr]_{\tilde{T}} & & L^0(X,m)\\
&L^2(X,m)\ar[ur]_{M_{\psi}} &
}
\]
Here, $M_{\psi}$ is the multiplication operator by the $\Gamma$-invariant function $\psi=\varphi^{-\frac{1}{2}}$ and $\tilde{T}\xi:=M_{\varphi^{\frac{1}{2}}}\cdot T\xi$ is $\Gamma$-equivariant. 
\end{theorem}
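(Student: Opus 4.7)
The plan is to first apply the classical (non-equivariant) Maurey--Nikishin factorization theorem to produce a positive weight $\varphi_0\in L^1_+(X,m)$, and then to extract a $\Gamma$-invariant weight via a Ryll--Nardzewski fixed-point argument in $L^1(X,m)$. Since $H$ is of Rademacher type $2$ and $T\colon H\to L^0(X,m)$ is continuous $\mathbb{R}$-linear, Maurey's version of Nikishin's theorem (\cite[Chapter~VI]{GCRF85}) produces $\varphi_0\in L^1_+(X,m)$ with $\int_X\varphi_0\,dm=1$, $\varphi_0>0$ $m$-a.e., and a constant $C_0>0$ such that $\int_X\varphi_0\,[T\xi]^2\,dm\le C_0\|\xi\|^2$ for every $\xi\in H$.

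Setting $C_\pi:=\|\pi\|<\infty$ and using both the covariance $[T(\pi(s)\xi)](x)=[T\xi](s^{-1}x)$ derived from Proposition \ref{prop: covariance} and the $\beta$-invariance of $m$, the identity
\[
\int_X\beta_s\varphi_0\cdot[T\xi]^2\,dm=\int_X\varphi_0\cdot[T\pi(s^{-1})\xi]^2\,dm\le C_0\|\pi(s^{-1})\xi\|^2\le C_0C_\pi^2\|\xi\|^2
\]
shows that the entire $\Gamma$-orbit $\{\beta_s\varphi_0\}_{s\in\Gamma}$ is contained in the weakly closed convex set
\[
\Phi:=\Bigl\{\varphi\in L^1_+(X,m)\,:\,\|\varphi\|_1=1,\ \int_X\varphi\,[T\xi]^2\,dm\le C_0C_\pi^2\|\xi\|^2\ \forall\xi\in H\Bigr\}.
\]
Because $\beta$ preserves $m$, the change of variables $\int_A\beta_s\varphi_0\,dm=\int_{s^{-1}A}\varphi_0\,dm$ together with the $L^1$-integrability of $\varphi_0$ gives uniform integrability of the orbit, so by the Dunford--Pettis theorem it is relatively weakly compact in $L^1(X,m)$. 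Its weakly closed convex hull $\Phi':=\overline{\mathrm{conv}}\{\beta_s\varphi_0:s\in\Gamma\}\subseteq\Phi$ is therefore a $\Gamma$-invariant weakly compact convex subset of $L^1(X,m)$, on which each $\beta_s$ acts as a weakly continuous, noncontracting linear $L^1$-isometry. The Ryll--Nardzewski fixed-point theorem then yields a $\Gamma$-fixed point $\varphi\in\Phi'$. Rescaling by $C_0C_\pi^2$ gives the desired bound $\int_X\varphi\,[T\xi]^2\,dm\le\|\xi\|^2$, and setting $\tilde T\xi:=\varphi^{1/2}T\xi$ and $\psi:=\varphi^{-1/2}$ makes the diagram commute: $\tilde T\colon H\to L^2(X,m)$ is bounded, and its $\Gamma$-equivariance follows immediately from the $\Gamma$-invariance of $\varphi$ together with that of $T$.

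The main technical obstacle I anticipate is guaranteeing that the fixed point $\varphi$ is $m$-a.e.\ positive: weak $L^1$-limits of strictly positive functions can vanish on sets of positive measure, and the multiplication operator $M_\psi=M_{\varphi^{-1/2}}$ is only defined on $\{\varphi>0\}$. A natural approach is to apply an ergodic decomposition of the $m$-preserving system $(X,m,\Gamma)$ and run the fixed-point argument separately on each ergodic component so as to produce a non-vanishing invariant weight on each, then glue them into a single $\Gamma$-invariant $\varphi$ with full support; alternatively one can try to enrich $\Phi'$ by a $\Gamma$-invariant supply of strictly positive reference weights (without destroying the Nikishin estimate) so as to force the fixed point to have full support.
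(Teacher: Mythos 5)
Your route is genuinely different from the paper's: you take the classical (non-equivariant) Maurey--Nikishin factorization as a black box to get one weight $\varphi_0$, verify that its $\Gamma$-orbit stays inside a weakly compact convex subset of $L^1(X,m)$ (the orbit estimate via $[T\xi](sy)=[T(\pi(s^{-1})\xi)](y)$ and the uniform-integrability/Dunford--Pettis step are both correct), and then average by Ryll--Nardzewski. The paper instead makes the weight \emph{canonical}: it re-proves the factorization via a minimax argument over a weakly compact convex set $B\subset L^2(X,m)$ (Proposition \ref{prop: GCRF3.1}), observes that the set $B_0$ of minimizers is a $\Gamma$-invariant weakly compact convex set, and picks the unique element of minimal $L^2$-norm, which is automatically $\Gamma$-fixed by uniform convexity --- no fixed-point theorem needed, and positivity comes for free from the explicit formula $\varphi=\sum_j k_j 1_{S(1/j)}$ with $m(X\setminus S(1/j))\le 1/j$.

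The genuine gap is exactly the one you flag and do not close: almost-everywhere positivity of the Ryll--Nardzewski fixed point $\varphi$. This is not a cosmetic issue --- without $\varphi>0$ a.e.\ the operator $M_{\psi}=M_{\varphi^{-1/2}}$ is undefined, the diagram does not commute, and in the application (proof of Theorem \ref{thm: keythm}) the form $\nai{\xi}{\eta}_{\pi}'$ would be degenerate, so the theorem as stated is simply not proved. Of your two suggested repairs, the ergodic-decomposition one is problematic: the Ryll--Nardzewski fixed point is not unique or canonical, so selecting one measurably on each ergodic component is a nontrivial measurable-selection problem that you would have to address. A cleaner repair stays global: the set $F$ of $\Gamma$-fixed weights satisfying the Nikishin bound is a nonempty weakly compact convex subset of $L^1_+$, closed under countable convex combinations $\sum_n 2^{-n}\varphi_n$; taking such a combination along a sequence whose supports exhaust $\sup_{\varphi\in F}m(\{\varphi>0\})$, and noting that if the resulting support missed an invariant set $E$ of positive measure one could rerun your whole argument for $1_E T(\cdot)$ on $(E,m|_E)$ to produce a fixed weight supported in $E$ (contradicting maximality), one gets a fixed point with full support. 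Since $\{\varphi>0\}$ is $\Gamma$-invariant for invariant $\varphi$, this closes the gap; but as written your proof stops short of establishing the a.e.\ positivity that the statement requires.
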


\begin{definition}
For $\varphi \in L^0(X,m)$, we define its {\it non-increasing rearrangement} $\varphi^{\star}\colon [0,1]\to [0,\infty)$ by 
\begin{equation*}
\varphi^{\star}(t):=\inf \{\lambda>0; m(\{x\in X; |\varphi (x)|>\lambda\})\le t\},\ \ \ \ t\in [0,1].
\end{equation*}
Then clearly $(c\varphi)^{\star}(t)=c\varphi^{\star}(t)\ (c>0,\ t\ge 0)$ holds. 
\end{definition}
The next lemmata are elementary. 
\begin{lemma}\label{lem: boundedness in measure}
Let $S\colon H\to L^0(X,m)$ be a linear operator. Then the following conditions are equivalent.
\begin{itemize}
\item[{\rm{(i)}}] $S$ is continuous. 
\item[{\rm{(ii)}}] There exists a non-increasing map $C\colon (0,\infty)\to [0,1]$ such that $\lim_{\lambda\to \infty}C(\lambda)=0$ and 
\begin{equation}
m(\{x\in X; |S\xi (x)|\ge \lambda \|\xi\|\})\le C(\lambda)\ \ \ \ \ \ (\lambda>0,\ \xi\in H).\label{eq: C(lambda)}
\end{equation}
\item[{\rm{(iii)}}] For every $t>0$, there exists $K(t)>0$ such that 
\begin{equation}
(S\xi)^{\star}(t)\le K(t)\|\xi\|\ \ \ \ (\xi\in H).\label{eq: continuity of S}
\end{equation}
\end{itemize}
\end{lemma}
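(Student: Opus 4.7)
The lemma is the standard characterization of continuity for a linear map into $L^0(X,m)$ with the topology of convergence in measure; the argument is essentially bookkeeping built on the scale-invariance of a linear operator. I would prove the three implications in the cycle (i)$\Rightarrow$(ii)$\Rightarrow$(iii)$\Rightarrow$(i), the first two being essentially definitions and the last a routine repackaging.

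For (i)$\Rightarrow$(ii), I would use continuity at $0$. A neighborhood basis of $0$ in $L^0(X,m)$ is given by the sets $\{f : m(|f|\geq \varepsilon)\leq \delta\}$, and continuity of $S$ at $0$ applied to $\varepsilon=1$ produces, for each $\delta>0$, some $r(\delta)>0$ such that $m(|S\xi|\geq 1)\leq \delta$ whenever $\|\xi\|\leq r(\delta)$. By homogeneity of $S$ this is equivalent to
\begin{equation*}
m(\{x\in X; |S\xi(x)|\geq \lambda\|\xi\|\})\leq \delta\quad \text{whenever}\quad \lambda\geq 1/r(\delta).
\end{equation*}
Defining $C(\lambda):=\sup_{\|\xi\|=1} m(\{|S\xi|\geq \lambda\})$ gives a non-increasing function $(0,\infty)\to [0,1]$ satisfying (\ref{eq: C(lambda)}), and the previous estimate shows $C(\lambda)\to 0$ as $\lambda\to \infty$.

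For (ii)$\Rightarrow$(iii), the definition $(S\xi)^{\star}(t)=\inf\{\lambda>0 : m(|S\xi|>\lambda)\leq t\}$ and the bound $m(|S\xi|>\lambda\|\xi\|)\leq C(\lambda)$ yield $(S\xi)^{\star}(t)\leq \lambda\|\xi\|$ for any $\lambda$ with $C(\lambda)\leq t$. So $K(t):=\inf\{\lambda>0 : C(\lambda)\leq t\}$ is finite for each $t>0$ (by $C(\lambda)\to 0$) and satisfies (\ref{eq: continuity of S}). For (iii)$\Rightarrow$(i), since $S$ is linear it suffices to check continuity at $0$. Given $\varepsilon,\delta>0$, set $r:=\varepsilon/(2K(\delta))$; then $\|\xi\|\leq r$ forces $(S\xi)^{\star}(\delta)\leq \varepsilon/2$, whence $m(\{|S\xi|>\varepsilon/2\})\leq \delta$, and in particular $m(\{|S\xi|\geq \varepsilon\})\leq \delta$, which is exactly continuity at $0$ in the convergence-in-measure topology.

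There is no real obstacle here: this is a routine exercise in translating between the three equivalent languages (neighborhoods of $0$ in $L^0(X,m)$, distribution-function estimates, and non-increasing rearrangements). The only care required is to keep track of strict versus non-strict inequalities in the distribution function and in the definition of $(\cdot)^{\star}$; a small $\varepsilon/2$ buffer (as above) handles this uniformly.
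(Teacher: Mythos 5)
Your proof is correct and follows essentially the same route as the paper: the cycle (i)$\Rightarrow$(ii)$\Rightarrow$(iii)$\Rightarrow$(i), using homogeneity of $S$ to normalize to the unit sphere and the definition of the rearrangement to pass between distribution-function bounds and $(S\xi)^{\star}$. The only (cosmetic) difference is that you deduce $C(\lambda)\to 0$ directly from continuity at $0$ plus scaling, whereas the paper obtains it by contradiction with a sequence $\eta_n=\xi_n/n$; both are fine.
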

\begin{proof}
(i)$\Rightarrow $(ii) For each $\varepsilon,\ \delta>0$, the set $U(\varepsilon,\delta):=\{f\in L^0(X,m); m(\{x\in X; |f(x)|\ge \varepsilon\})\le \delta\}$ is an neighborhood of 0 in the measure topology. For each $\lambda>0$, define 
\[C(\lambda):=\inf \{\delta>0; S\xi\in U(\lambda,\delta)\text{\ for\ all\ }\xi\in H\text{\ with\ }\|\xi\|\le 1\}.\] 
Note that $C(\lambda)\le 1$.  
It is then clear that $C(\cdot)$ is nonincreasing and (\ref{eq: C(lambda)}) holds. 
We claim that $\lim_{\lambda\to \infty}C(\lambda)=0$. Assume by contradiction that there exists $\delta>0$ such that $C(\lambda)\ge \delta$ holds for all $\lambda>0$. Then for each $n\in \mathbb{N}$, there exists $\xi_n\in H$ with $\|\xi_n\|\le 1$, such that $S\xi_n\notin U(n,\delta-\frac{1}{n})$, i.e., $m(\{x\in X; |S\xi_n(x)|\ge n\})>\delta-\frac{1}{n}$ holds. Then $\eta_n:=\frac{1}{n}\xi_n\ (n\in \mathbb{N})$ satisfies $\|\eta_n\|\stackrel{n\to \infty}{\to}0$, but 
$m(\{x\in X; |S\eta_n(x)|\ge 1)>\delta-\frac{1}{n}$, so $S\eta_n\not\to 0$ in measure, a contradiction.\\
(ii)$\Rightarrow$(iii) Let $t>0$. Since $\lim_{\lambda\to \infty}C(\lambda)=0$, there exists $K(t)>0$ such that $C(\lambda)<t\ (\lambda\ge K(t))$ holds. Then for $\xi\in H$ with $\|\xi\|\le 1$, 
$m(\{x\in X; |S\xi(x)|>\lambda\})\le C(\lambda)$, so that $(S\xi)^{\star}(t)\le K(t)$. For general $\xi\neq 0$, we have 
$(S\|\xi\|^{-1}\xi)^{\star}(t)\le \|\xi\|^{-1}K(t)$, whence (\ref{eq: continuity of S}) holds.\\
(iii)$\Rightarrow $(i) Since $H$ and $L^0(X,m)$ are metrizable, it suffices to show the sequential continuity of $S$. Let $\xi_n\in H$ be such that $\|\xi_n\|\stackrel{n\to \infty}{\to}0$. Let $\varepsilon,\delta>0$. 
By assumption, we have $(S\xi_n)^{\star}(\delta)\le K(\delta)\|\xi_n\|\stackrel{n\to \infty}{\to}0$. 
Therefore there exists $N\in \mathbb{N}$ such that $(S\xi_n)^{\star}(\delta)<\varepsilon$ holds for all $n\ge N$. In particular, we have $m(\{x\in X; |S\xi_n(x)|\ge \varepsilon\})\le \delta$. Since $\varepsilon,\delta>0$ are arbitrary, $S\xi_n$ converges to 0 in measure. 
\end{proof}
\begin{lemma}\label{lem: easy lemma}
Let $\varphi \in L^0(X,m)$. Then for every $\delta>0$, $m(\{x\in X; |\varphi (x)|>\varphi^{\star}(\delta)\})\le \delta$ holds. 
\end{lemma}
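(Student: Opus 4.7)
The plan is to exploit the very definition of $\varphi^{\star}$ as an infimum, combined with continuity from below of the measure $m$. Concretely, I will approximate $\varphi^{\star}(\delta)$ from above by a sequence of thresholds whose superlevel sets already have measure at most $\delta$, and then pass to the limit.

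First, I would observe that because $\varphi$ is finite $m$-a.e.\ and $m$ is a probability measure, the set $\{\lambda > 0 : m(\{|\varphi| > \lambda\}) \le \delta\}$ is nonempty (one has $m(\{|\varphi| > \lambda\}) \to 0$ as $\lambda \to \infty$), so $\varphi^{\star}(\delta)$ is an honest nonnegative real number. By the defining property of infimum, I can then pick a decreasing sequence $\lambda_n \downarrow \varphi^{\star}(\delta)$ with $\lambda_n > \varphi^{\star}(\delta)$ and $m(\{|\varphi| > \lambda_n\}) \le \delta$ for every $n$.

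Next, setting $E_n := \{|\varphi| > \lambda_n\}$, the $E_n$ form an increasing family whose union is exactly $\{|\varphi| > \varphi^{\star}(\delta)\}$: the inclusion $\bigcup_n E_n \subseteq \{|\varphi| > \varphi^{\star}(\delta)\}$ is immediate from $\lambda_n > \varphi^{\star}(\delta)$, while the reverse inclusion follows because any $x$ with $|\varphi(x)| > \varphi^{\star}(\delta)$ satisfies $|\varphi(x)| > \lambda_n$ for all sufficiently large $n$, since $\lambda_n \to \varphi^{\star}(\delta)$. Continuity from below then yields
\[
m(\{|\varphi| > \varphi^{\star}(\delta)\}) = \lim_{n \to \infty} m(E_n) \le \delta,
\]
which is the desired inequality.

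There is no genuine obstacle; the point is simply that the strict inequality in the definition of $\varphi^{\star}(\delta)$ makes the superlevel set at the threshold $\varphi^{\star}(\delta)$ itself equal to the countable increasing union of superlevel sets at strictly larger thresholds, each of which has measure $\le \delta$ by choice. This is exactly why the infimum in the definition of $\varphi^{\star}$ is ``attained in the limit'' for the purpose of this upper bound.
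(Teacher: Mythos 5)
Your proof is correct and is essentially the same as the paper's: both arguments choose thresholds $\lambda_n>\varphi^{\star}(\delta)$ with $\lambda_n\downarrow\varphi^{\star}(\delta)$ and $m(\{|\varphi|>\lambda_n\})\le\delta$ from the definition of the infimum, and then use continuity from below of $m$ on the increasing union $\{|\varphi|>\varphi^{\star}(\delta)\}=\bigcup_n\{|\varphi|>\lambda_n\}$ to conclude. The only cosmetic difference is that the paper routes the limit through the sets $\{|\varphi|>\varphi^{\star}(\delta)+\tfrac1n\}$ and a $\limsup$, while you apply continuity from below directly to the $\lambda_n$-superlevel sets.
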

\begin{proof} Let $\lambda:=\varphi^{\star}(\delta)$. Then for each $n\in \mathbb{N}$, there exists $\lambda_n\in (\lambda,\lambda+\frac{1}{n})$ such that 
$\mu(\{x\in X; |\varphi(x)|>\lambda_n\})\le \delta$. Thus 
\begin{align*}
m(\{x\in X; |\varphi(x)|>\lambda\})&\le \lim_{n\to \infty}m(\{x\in X; |\varphi(x)|>\lambda+\tfrac{1}{n}\})\\
&\le \limsup_{n\to \infty}m(\{x\in X; |\varphi(x)|>\lambda_n\})\\
&\le \delta.
\end{align*} 
\end{proof}

\if0
\begin{lemma}
Let $S\colon H\to L^0(X,m)$ be a continuous linear operator. 
The following conditions are equivalent. 
\begin{itemize}
\item[{\rm{(i)}}] There exists a nonincreasing function $C\colon (0,\infty)\to [0,\infty)$ such that for every finite sequence $\xi_1,\dots,\xi_n\in H$ with $\sum_{j=1}^n\|\xi_j\|^2\le 1$, one has 
\begin{equation}
m\left (\left \{x\in X; \left (\sum_{j=1}^n(S\xi_j)(x)^2\right )^{\frac{1}{2}}\ge \lambda \right \}\right )\le C(\lambda).
\end{equation}
\item[{\rm{(ii)}}] For every $t>0$, there exists $K(t)>0$ such that for any finite sequence $\xi_1,\dots,\xi_n\in H$, one has 
\begin{equation}
\left (\sum_{j=1}^n(S\xi_j)^2\right )^{\star}(t)\le K(t)^2\sum_{j=1}^{n}\|\xi_j\|^2.
\end{equation} 
\end{itemize}  
\end{lemma}
\fi 

The crucial step in our proof is to establish the following proposition. This is the step which allows to make the construction of the factorization canonical and hence equivariant with respect to a group of symmetries.
\begin{proposition}\label{prop: GCRF3.1} Let $A$ be a convex subset of $L^0(X,m)_+$ which is globally $\Gamma$-invariant. Then for every $\varepsilon>0$, there is a $\Gamma$-invariant measurable subset $S(\varepsilon)\subset X$ with $m(X\setminus S(\varepsilon))\le \varepsilon$ and 
\begin{equation}
\sup_{\varphi \in A}\int_{S(\varepsilon)}\varphi(x)\,{\rm{d}}m(x)\le 2\sup_{\varphi \in A}\varphi^{\star}(\tfrac{\varepsilon}{2}).\label{eq: sup_varphi}
\end{equation}
\end{proposition}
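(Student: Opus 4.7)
The plan is to adapt Maurey's variational argument (as in GCRF85, Ch.~VI.3, Prop.~3.1) by producing a canonical maximizer that is automatically $\Gamma$-invariant, thereby upgrading the non-equivariant result to a $\Gamma$-equivariant one. First, set $M := \sup_{\varphi\in A}\varphi^\star(\varepsilon/2)$; I may assume $M < \infty$, as the conclusion is otherwise vacuous. Since each $\beta_s$ is $m$-preserving, $(\beta_s\varphi)^\star = \varphi^\star$, so $M$ is itself a $\Gamma$-invariant quantity attached to the $\Gamma$-invariant family $A$.

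The central object will be the convex set
\[
\mathcal{F} := \overline{\text{co}}^{\,w^*}\{\varphi\wedge M : \varphi\in A\}\subset L^\infty(X,m),
\]
bounded in $L^\infty$ by $M$ and hence weak-$*$ compact by Banach--Alaoglu; it is $\Gamma$-invariant because $A$ is and truncation $\varphi\mapsto\varphi\wedge M$ commutes with each $\beta_s$. The linear functional $J(f) := \int_X f\,dm$ is weak-$*$ continuous and $\Gamma$-invariant (as $m$ is $\Gamma$-invariant), attaining its supremum $\alpha \le M$ on the convex, weak-$*$ compact, $\Gamma$-invariant face $\mathcal{F}^* := \{f\in \mathcal{F} : J(f) = \alpha\}$. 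To single out a canonical $\Gamma$-invariant element of $\mathcal{F}^*$ without relying on amenability of $\Gamma$, I minimize the strictly convex, weak-$*$ lower semicontinuous, $\Gamma$-invariant auxiliary functional $\Psi(f) := \int_X f^2\,dm$ on $\mathcal{F}^*$. Strict convexity of $t\mapsto t^2$ forces uniqueness of the minimizer $f_0 \in \mathcal{F}^*$, and uniqueness combined with the $\Gamma$-invariance of both $\mathcal{F}^*$ and $\Psi$ immediately yields $\beta_s f_0 = f_0$ for every $s \in \Gamma$.

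With this canonical $\Gamma$-invariant $f_0$ in hand, I construct $S(\varepsilon)$ by the standard Maurey thresholding applied to $f_0$: take $S(\varepsilon)$ to be an appropriate sublevel set of $f_0$ (and possibly intersect with a control set coming from the definition of $M$). Since $f_0$ is $\Gamma$-invariant, so is $S(\varepsilon)$. A Markov-type estimate applied to $\int f_0\,dm = \alpha \le M$ will yield $m(X\setminus S(\varepsilon))\le \varepsilon$, while the integral bound $\sup_{\varphi \in A}\int_{S(\varepsilon)}\varphi\,dm \le 2M$ will follow from the decomposition $\int_S \varphi = \int_S (\varphi\wedge M) + \int_S (\varphi - M)_+$: the first summand is controlled by the maximality identity $\int (\varphi\wedge M)\,dm \le \alpha \le M$, and the second via the defining bound $m(\{\varphi > M\})\le \varepsilon/2$ on the excess set.

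The hardest step is the $\Gamma$-equivariance of the maximizer: the face $\mathcal{F}^*$ is $\Gamma$-invariant, but a priori it need not contain a $\Gamma$-fixed point, since Markov--Kakutani and the usual compact-convex fixed-point theorems demand amenability, which we cannot assume of $\Gamma$. The strict convexity trick—selecting the unique minimizer of a strictly convex $\Gamma$-invariant auxiliary functional inside $\mathcal{F}^*$—bypasses this obstruction and appears to be the critical ingredient enabling an equivariant Maurey--Nikishin theorem in this generality.
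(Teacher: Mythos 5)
Your equivariance mechanism---selecting the unique element of minimal $L^2$-norm (equivalently, the unique minimizer of a strictly convex, weakly lower semicontinuous, $\Gamma$-invariant functional) inside a weakly compact, convex, $\Gamma$-invariant set of optimizers---is exactly the device used in the paper's proof, so that part of the plan is sound. But you apply it to the wrong variational problem, and the concluding estimate has a genuine gap. The problematic term is $\int_{S}(\varphi-M)_+\,\mathrm{d}m$. The bound $m(\{\varphi>M\})\le\varepsilon/2$ (which does follow from Lemma \ref{lem: easy lemma}) controls only the \emph{measure} of the excess set, not the integral of $\varphi$ over it: elements of $A\subset L^0(X,m)_+$ need not be integrable, so this term can be infinite for your choice of $S$. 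Moreover, $S$ must work for \emph{all} $\varphi\in A$ simultaneously, and the union $\bigcup_{\varphi\in A}\{\varphi>M\}$ can have full measure, so no single ``control set coming from the definition of $M$'' can be excised. Your $f_0$ cannot repair this: it is assembled from the truncations $\varphi\wedge M$, which have been stripped of precisely the information ($where$ each individual $\varphi$ is huge) that $S$ must avoid, and neither sublevel nor superlevel sets of $f_0$ constrain $\int_S\varphi\,\mathrm{d}m$ for the other elements of $A$. (Your first summand estimate $\int(\varphi\wedge M)\,\mathrm{d}m\le\alpha\le M$ is correct but trivial, since $\varphi\wedge M\le M$ pointwise.)

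The missing ingredient is a minimax/duality step, and this is where the paper's proof genuinely differs. There the optimization is run in the \emph{dual} variable: one introduces the convex, weakly compact, $\Gamma$-invariant set of weights $B=\{\varphi\in L^0(X,m):\ 0\le\varphi\le1,\ \int_X\varphi\,\mathrm{d}m\ge1-\tfrac{\varepsilon}{2}\}\subset L^2(X,m)$ and applies the Minimax Lemma (Lemma \ref{lem: minimax}) to $\Phi(\psi,\varphi)=\int_X\psi\varphi\,\mathrm{d}m$ on $A\times B$, using that $1_{\{\psi\le M\}}\in B$ to bound $\sup_{\psi}\min_{\varphi}$ by $M$. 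This yields a single weight $\varphi_0\in B$ with $\int_X\psi\varphi_0\,\mathrm{d}m\le M$ for \emph{every} $\psi\in A$ simultaneously---exactly the uniform control your argument lacks---and it is to the set $B_0$ of such minimizing weights that the minimal-$L^2$-norm selection is applied. The set $S(\varepsilon)=\{\varphi_0\ge\tfrac12\}$ is then a \emph{superlevel} set of the dual weight, and both conclusions follow at once: $m(X\setminus S(\varepsilon))\le2\int_X(1-\varphi_0)\,\mathrm{d}m\le\varepsilon$ and $\int_{S(\varepsilon)}\psi\,\mathrm{d}m\le2\int_X\psi\varphi_0\,\mathrm{d}m\le2M$. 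Note also that your argument never uses the convexity of $A$ itself (you pass to a convex hull of truncations anyway), whereas convexity of $A$ is an essential hypothesis of the minimax step; this is a further indication that the duality cannot be dispensed with.
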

This is a $\Gamma$-invariant version of \cite[Proposition VI.3.1]{GCRF85}. 
In order to show Proposition \ref{prop: GCRF3.1}, we use the following variant of the Minimax Lemma:
\begin{lemma}[Minimax Lemma]\label{lem: minimax} 
Let $X,Y$ be topological vector spaces, and let $A$ (resp. $B$) be a convex subset of $X$ (resp. $Y$). Assume that $B$ is compact. If the function $\Phi\colon A\times B\to \mathbb{R}\cup \{+\infty\}$ satisfies$\colon$
\begin{itemize}
\item[{\rm{(i)}}] $\Phi(\cdot,b)$ is a concave function on $A$ for each $b\in B$,
\item[{\rm{(ii)}}] $\Phi(a,\cdot)$ is a convex function on $B$ for each $a\in A$,
\item[{\rm{(iii)}}] $\Phi(a,\cdot)$ is lower-semicontinuous on $B$ for each $a\in A$, 
\end{itemize}
then the following identity holds:
\begin{equation*}
\min_{b\in B}\sup_{a\in A}\Phi(a,b)=\sup_{a\in A}\min_{b\in B}\Phi(a,b).
\end{equation*}
\end{lemma}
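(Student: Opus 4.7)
The plan is to prove the minimax identity by the classical Ky Fan/Sion route, namely reducing it to the finite intersection property of sublevel sets and then reducing the finite intersection property to a finite-dimensional minimax problem on the probability simplex.

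First I would justify the $\min$'s: since each $\Phi(a,\cdot)$ is LSC on the compact set $B$, $\min_b \Phi(a,b)$ is attained; moreover, $G(b) := \sup_{a\in A}\Phi(a,b)$ is a supremum of a family of LSC functions, hence itself LSC, so by compactness of $B$ it also attains its minimum. The trivial inequality $\sup_a \min_b \Phi(a,b) \le \min_b \sup_a \Phi(a,b)$ is immediate. Set $\alpha := \sup_a \min_b \Phi(a,b)$. For each $a\in A$ define
\begin{equation*}
F_a := \{b\in B : \Phi(a,b) \le \alpha\}.
\end{equation*}
Each $F_a$ is closed in $B$ (by LSC) and nonempty (by the definition of $\alpha$ together with attainment). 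Any $b^{*}\in \bigcap_{a\in A} F_a$ satisfies $\sup_a \Phi(a,b^{*}) \le \alpha$ and thus witnesses the reverse inequality $\min_b \sup_a \Phi(a,b) \le \alpha$. By compactness of $B$, it therefore suffices to verify the finite intersection property of $\{F_a\}_{a\in A}$.

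The crux is thus: for any $a_1,\ldots,a_n\in A$ there exists $b\in B$ with $\max_i \Phi(a_i,b) \le \alpha$. Introduce the auxiliary function $\Psi\colon \Delta_n \times B \to \mathbb{R}\cup\{+\infty\}$, where $\Delta_n$ is the probability simplex, by
\begin{equation*}
\Psi(\lambda,b) := \sum_{i=1}^n \lambda_i\, \Phi(a_i,b).
\end{equation*}
Then $\Psi(\cdot,b)$ is linear (hence concave) on $\Delta_n$ and $\Psi(\lambda,\cdot)$ is convex and LSC on $B$ (a nonnegative combination of convex LSC functions). Assumption (i) applied to the convex combination $\sum_i \lambda_i a_i \in A$ gives
\begin{equation*}
\Psi(\lambda,b) \le \Phi\Bigl(\sum_i \lambda_i a_i,\, b\Bigr),
\end{equation*}
whence $\min_b \Psi(\lambda,b) \le \min_b \Phi(\sum_i \lambda_i a_i, b) \le \alpha$ for every $\lambda\in\Delta_n$, and therefore $\sup_\lambda \min_b \Psi(\lambda,b) \le \alpha$.

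It remains to exchange $\sup_\lambda$ and $\min_b$ in this last inequality. This is the finite-dimensional Kneser--Fan minimax theorem on the compact convex simplex $\Delta_n$: using linearity of $\Psi(\cdot,b)$ on $\Delta_n$ and convexity/LSC of $\Psi(\lambda,\cdot)$ on the compact set $B$, one obtains $\min_b \sup_\lambda \Psi(\lambda,b) = \sup_\lambda \min_b \Psi(\lambda,b) \le \alpha$. Since $\sup_\lambda \Psi(\lambda,b) = \max_i \Phi(a_i,b)$, this produces the desired $b\in \bigcap_{i=1}^n F_{a_i}$. The main obstacle is precisely this finite-dimensional exchange; it can be handled by induction on $n$ (the base case $n=1$ is trivial, and the inductive step interpolates along an edge of $\Delta_n$ and exploits LSC plus compactness of $B$ to extract a limit point), or alternatively by a Hahn--Banach separation of the convex set $\{(t_1,\ldots,t_n)\in\mathbb{R}^n : \exists\, b\in B,\ \Phi(a_i,b)\le t_i\ \forall i\}$ from the open orthant $\{\alpha\mathbf{1} - \mathbb{R}_{+}^n\}^{\circ}$, both routes being standard in the Sion--Fan literature.
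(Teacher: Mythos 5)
The paper does not actually prove this lemma; it simply points to \cite{GCRF85}, Appendix A.2, where the classical Kneser--Fan argument is given. Your proposal is a correct rendition of essentially that same argument. The reduction to the finite intersection property of the sublevel sets $F_a$ is sound: closedness follows from (iii), nonemptiness from attainment of the inner minimum on the compact set $B$, and the existence of both minima is correctly justified (a supremum of lower-semicontinuous functions is lower-semicontinuous). The passage through the simplex $\Delta_n$ is also correct: convexity of $A$ guarantees $\sum_i\lambda_ia_i\in A$, and concavity (i) gives $\Psi(\lambda,b)\le\Phi\bigl(\sum_i\lambda_ia_i,b\bigr)$, hence $\sup_{\lambda}\min_b\Psi\le\alpha$; and $\sup_{\lambda\in\Delta_n}\Psi(\lambda,b)=\max_i\Phi(a_i,b)$ since $\Psi(\cdot,b)$ is affine on the simplex. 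The one step you do not carry out is the one you flag yourself: the exchange of $\sup_\lambda$ and $\min_b$ for $\Psi$ on $\Delta_n\times B$, which is the entire content of the finite-dimensional Kneser--Fan theorem and is where all the work in any proof of this lemma resides. Both routes you indicate (induction on $n$ via the two-function interpolation argument using lower semicontinuity and compactness of $B$, or separation of the convex set of attainable value vectors from $\alpha\mathbf{1}-\mathrm{int}(\mathbb{R}_+^n)$) are standard and do go through, so this is a deferral to a classical result rather than a genuine gap; a fully self-contained write-up would need to include one of them.
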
  
\begin{proof} See \cite[Appendix A.2]{GCRF85}.
\end{proof}
\begin{proof}[Proof of Proposition \ref{prop: GCRF3.1}]
Define 
\begin{equation*}
B:=\left \{\varphi \in L^0(X,m); 0\le \varphi\le 1\ (m\text{-a.e.}), \int_X\varphi(x)\,\text{d}m(x)\ge 1-\frac{\varepsilon}{2}\right \}.
\end{equation*}
Then we first check that $B$ is a globally $\Gamma$-invariant, weakly compact, convex subset of the Hilbert space $L^2(X,m)$: indeed, $B\subset L^{\infty}(X,m)\subset L^2(X,m)$ is clearly a convex set contained in the unit ball of $L^2(X,m)$ (any $\varphi\in B$ satisfies $\int_X |\varphi|^2dm\le \int_X1dm=1$). Since $m\circ \beta_s=m\ (s\in \Gamma)$, the $\Gamma$-invariance is clear. Suppose $(\varphi_n)_{n=1}^{\infty}$ is a sequence in $B$ converging weakly to $\varphi\in L^2(X,m)$. For each $k\in \mathbb{N}$, let  $E_k=\{x\in X; \varphi(x)\le -\frac{1}{k}\}$. Then 
\[0\le \lim_{n\to \infty}\nai{\varphi_n}{1_{E_k}}_{L^2(X)}=\nai{\varphi}{1_{E_k}}_{L^2(X)}=\int_{E_k}\varphi\ \text{d}m\le -\frac{1}{k}m(E_k)\le 0,\]
so that $m(E_k)=0$. Therefore $E=\{x\in X; \varphi(x)<0\}=\bigcup_{k\in \mathbb{N}}E_k$ satisfies $m(E)=0$, so $\varphi\ge 0$ a.e. Similarly, $\varphi\le 1$ a.e.  holds, and 
\[\int_X\varphi\ \text{d}m=\nai{\varphi}{1}_{L^2(X)}=\lim_{n\to \infty}\nai{\varphi_n}{1}\ge 1-\frac{\varepsilon}{2}.\]  
Therefore $\varphi\in B$. This shows that $B$ is weakly closed, so that $B$ is a weakly compact convex subset of $L^2(X,m)$.  

Next, define $\Phi\colon A\times B\to \mathbb{R}\cup \{+\infty\}$ by 
\begin{equation*}
\Phi(\psi,\varphi):=\int_X \psi \varphi\ \text{d}m,\ \ \ \ \ \ (\psi\in A,\ \varphi\in B).
\end{equation*}
It is clear that $\Phi$ satisfies conditions (i) and (ii) of Lemma \ref{lem: minimax}. To verify the condition (iii) of Lemma \ref{lem: minimax}, define for $\psi\in A,\ \varphi\in B$ and $t>0$: 
\begin{align*}
E_t(\psi):&=\{x\in X; \psi(x) \le t\},\\
\Phi_t(\psi,\varphi)&:=\int_{E_t(\psi)}\psi \varphi\ \text{d}m. 
\end{align*} 
Then for $\psi\in A$ and $t>0$, the map $\Phi_t(\psi,\cdot)\colon B\to \mathbb{R}_+$ is weakly continuous, and for $\varphi\in B$, we have 
\begin{equation*}
\Phi(\psi,\varphi)=\sup_{t>0}\Phi_t(\psi,\varphi).
\end{equation*}
This shows that $\Phi(\psi,\cdot)$ is weakly lower-semicontinuous on $B$. 
Let $M:=\sup_{\psi\in A}\psi^{\star}(\frac{\varepsilon}{2})$, and assume that $M<\infty$ (if $M=\infty$, there is nothing to prove). Then for every $\psi\in A$, 
\eqa{
1-\int_{X}1_{E_M(\psi)}\ \text{d}m&=m(\{x\in X; |\psi(x)|>M\})\\
&\le m(\{x\in X;\ |\psi(x)|>\psi^{\star}(\tfrac{\varepsilon}{2})\})\le \tfrac{\varepsilon}{2}.
} 
Here we used Lemma \ref{lem: easy lemma}. This shows that $1_{E_M(\psi)}\in B$ for every $\psi\in A$. Therefore by Lemma \ref{lem: minimax}, 
\begin{align}
\min_{\varphi \in B}\sup_{\psi \in A}\int_X\psi \varphi\ \text{d}m&=\sup_{\psi \in A}\min_{\varphi \in B}\int_X\psi \varphi\ \text{d}m\notag \\
&\le \sup_{\psi\in A}\int_{E_M(\psi)}\psi\ \text{d}m\le M.\label{eq: min varphi}
\end{align}
Therefore the minimum of the left hand side will be attained at some $\varphi\in B$. So the set $B_0$ of all such minimizers $\varphi\in B$ is nonempty. We observe that $B_0$ is a globally $\Gamma$-invariant weakly closed, convex subset of $B$. 
Indeed, let $0<\lambda<1$ and $\varphi_1,\varphi_2\in B_0$ be minimizers and let $\varphi=\lambda \varphi_1+(1-\lambda)\varphi_2$. 
Then for $\psi\in A$, 
\eqa{
\sup_{\psi \in A}\int_X \psi \varphi\ \text{d}m&\le \lambda \sup_{\psi \in A}\int_X\psi \varphi_1\ \text{d}m+(1-\lambda)\sup_{\psi \in A}\int_X\psi \varphi_2\ \text{d}m\\
&=\min_{\varphi'\in B}\sup_{\psi \in A}\int_X\psi \varphi'\ \text{d}m.
}
This shows that $\varphi$ is also a minimizer, hence in $B_0$. Next, let $(\varphi_n)_{n=1}^{\infty}$ be a sequence of elements in $B_0$ converging weakly to $\varphi\in L^2(X,m)$. Then $\varphi\in B$, and Fatou's Lemma shows 
\begin{align*}
\sup_{\psi \in A}\int_X \psi \varphi\ \text{d}m&\le \sup_{\psi\in A}\liminf_{k\to \infty}\int_{X}\underbrace{\psi 1_{\{x; \psi(x)\le k\}}}_{\in L^2(X,m)}\varphi\ \text{d}m\\
&=\sup_{\psi\in A}\liminf_{k\to \infty}\lim_{n\to \infty}\int_{X}\psi 1_{\{x; \psi(x)\le k\}}\varphi_n\ \text{d}m\\
&\le \sup_{\psi\in A}\liminf_{k\to \infty}\limsup_{n\to \infty}\int_X\psi \varphi_n\ \text{d}m\\
&\le \sup_{\psi\in A}\limsup_{n\to \infty}\sup_{\psi'\in A}\int_X \psi'\varphi_n\ \text{d}m\\
&\stackrel{\varphi_n\in B_0}{=}\limsup_{n\to \infty}\min_{\varphi'\in B}\sup_{\psi'\in A}\int_X\psi'\varphi'\ \text{d}m\\
&=\min_{\varphi'\in B}\sup_{\psi'\in A}\int_X\psi' \varphi'\ \text{d}m.
\end{align*} 
This shows that $\varphi$ is a minimizer too, so $\varphi\in B_0$. 
Finally, let $\varphi\in B_0$ and $s\in \Gamma$. 
Then since $A$ is globally $\Gamma$-invariant and $m$ is $\Gamma$-invariant, we have (recall that the $\Gamma$ action on $X$ is given so that  $\beta_s(\varphi)=\varphi(s^{-1}\cdot\ )$)
\begin{align}
\sup_{\psi\in A}\int_X\psi \beta_s(\varphi)\ \text{d}m&=\sup_{\psi \in A}\int_X\beta_{s^{-1}}(\psi)\varphi\ \text{d}m(g\cdot) \notag\\
&=\sup_{\psi \in \beta_{s^{-1}}(A)}\int_X\psi \varphi\ \text{d}m \notag\\
&=\min_{\varphi'\in B}\sup_{\psi \in A}\int_X\psi \varphi'\ \text{d}m. \notag
\end{align} 
This shows that $\beta_s(\varphi)\in B_0$. 
Therefore $B_0$ is a $\Gamma$-invariant, non-empty, weakly compact, and convex subset of $L^2(X,m)$. Let $\varphi_0$ be an element in $B_0$ of minimum $L^2$-norm. By the uniform convexity, such an element is unique. 
Then since $\|\beta_s(\varphi_0)\|_{L^2(X)}=\|\varphi_0\|_{L^2(X)}\ (s\in \Gamma)$, we have $\beta_s(\varphi_0)=\varphi_0\ (s\in \Gamma)$ by the uniqueness of $\varphi_0$. Then define 
\begin{equation*}
S(\varepsilon):=\{x\in X;\ \varphi_0(x)\ge \tfrac{1}{2}\}.
\end{equation*}
Then $S(\varepsilon)$ is a $\Gamma$-invariant measurable subset of $X$. 
Moreover, 
\begin{align}
m(X\setminus S(\varepsilon))&=m(\{x\in X; \varphi_0(x)<\tfrac{1}{2}\}) \notag\\
&\le 2\int_X (1-\varphi_0(x))\ \text{d}m\le \varepsilon,\notag\\
\sup_{\psi\in A}\int_{S(\varepsilon)}\psi\ \text{d}m&\le 2\sup_{\psi \in A}\int_X\psi \varphi_0\ \text{d}m\le 2M.\notag
\end{align}
Here in the last line we used (\ref{eq: min varphi}).    
\end{proof}

\if0
\begin{theorem}
The following statements hold for the map $T\colon H\to L^0(X,m)$.   
\begin{itemize}
\item[{\rm{(i)}}] There exists a function $(0,\infty)\ni \lambda \mapsto C(\lambda)\in [0,1]$ such that $\lim_{\lambda\to \infty}C(\lambda)=0$, such that for every finite family $\xi_1,\cdots,\xi_n\in H$, one has 
\begin{equation}
\sum_{j=1}^n\|\xi_j\|^p\le 1\Rightarrow m\left (\left \{x\in X; \left (\sum_{j=1}^n|T\xi_j(x)|^p\right )^{\frac{1}{p}}>\lambda \right \}\right )\le C(\lambda).
\end{equation} 
\item[{\rm{(ii)}}] For every $\varepsilon>0$, there exist $C_{\varepsilon}>0$ and a $\Gamma$-invariant measurable subset $S(\varepsilon)\subset X$ such that $m(X-S(\varepsilon))<\varepsilon$ and 
\item[{\rm{(iii)}}] There exists a $\Gamma$-invariant measurable function $\varphi\in L^0(X,m)$ such that
\begin{equation}
\int_X\varphi |T\xi|^p\ \text{d}m\le \|\xi\|^p\ \ \ \ \ (\xi \in H).\label{eq: 3.3b}
\end{equation}

\begin{equation}
\int_{S(\varepsilon)}|T\xi|^p\ \text{d}m\le C_{\varepsilon}\|\xi\|^p\label{eq: 3.3c}
\end{equation}
\end{itemize}
\end{theorem}
\fi 

\if0
\begin{theorem}Let $H$ be a Hilbert space, and let $T\colon H\to L^0(X,m)$ be a continuous linear operator and $0<p<\infty$. The following conditions are equivalent: 
\begin{itemize}
\item[{\rm{(a)}}] $T$ factors through $L^p(X,m)$.
\item[{\rm{(b)}}] There exists a measurable function $\varphi\in L^0(X,m)$ such that
\begin{equation}
\int_X\varphi |T\xi|^p\ \text{d}m\le \|\xi\|^p\ \ \ \ \ (\xi \in H).\label{eq: 3.3b}
\end{equation}
\item[{\rm{(c)}}] For every $\varepsilon>0$, there exist $C_{\varepsilon}>0$ and $S(\varepsilon)\subset X$ such that $m(X-S(\varepsilon))<\varepsilon$ and 
\begin{equation}
\int_{S(\varepsilon)}|T\xi|^p\ \text{d}m\le C_{\varepsilon}\|\xi\|^p\label{eq: 3.3c}
\end{equation}
\item[{\rm{(d)}}] There exists a function $(0,\infty)\ni \lambda \mapsto C(\lambda)\in [0,\infty)$ such that $\lim_{\lambda\to \infty}C(\lambda)=0$, such that for every finite family $\xi_1,\cdots,\xi_n\in H$, one has 
\begin{equation}
\sum_{j=1}^n\|\xi_j\|^p\le 1\Rightarrow m\left (\left \{x\in X; \left (\sum_{j=1}^n|T\xi_j(x)|^p\right )^{\frac{1}{p}}>\lambda \right \}\right )\le C(\lambda).
\end{equation} 
\end{itemize}
Moreover, if $p=2$, all the above conditions are satisfied, and if in addition that $T$ is the map constructed from the embedding $\alpha\colon H\rtimes_{\pi}\Gamma\to \mathcal{U}(M)$ and $(X,m)$ is equipped with the $m$-preserving $\Gamma$ action as above, then we may choose the $S(\varepsilon)$ in (c) and the $\varphi$ in (b) to be $\Gamma$-invariant.  
\end{theorem}
\fi
In the sequel, we will mainly follow the arguments in \cite[Theorem VI.3.3]{GCRF85}. Since this source is hardly accessible, we decided to repeat  some of the arguments (and present them in a streamlined form) in order to provide a complete proof.
We set $Z:= \{ |(T\xi)(x)|^2; \xi \in H, \|\xi\| \leq 1 \}\subset L^0(X,m)_+$ and note that 
\begin{equation}
A:=\left\{ \sum_{j=1}^n |(T\xi_j)|^2; n \in \mathbb N, \xi_1,\dots,\xi_n \in H, \sum_{j=1}^n \|\xi_j\|^2 \leq 1 \right\}\subset L^0(X,m)_+\label{eq: def of A}
\end{equation} is the convex hull of $Z$. In order to apply Proposition \ref{prop: GCRF3.1} to (a subset of) $A$, we need to show that the estimates from Lemma \ref{lem: boundedness in measure} extend to the convex hull of $Z$. This is the content of the following proposition.

\begin{proposition}\label{prop: 3.3 (d)}
There exists a function $(0,\infty)\ni \lambda \mapsto C(\lambda)\in [0,1]$ satisfying $$\lim_{\lambda\to \infty}C(\lambda)=0,$$ such that for every finite family $\xi_1,\cdots,\xi_n\in H$, one has 
\begin{equation*}
\sum_{j=1}^n\|\xi_j\|^2\le 1\Rightarrow m\left (\left \{x\in X; \left (\sum_{j=1}^n|T\xi_j(x)|^2\right )^{\frac{1}{2}}\ge\lambda \right \}\right )\le C(\lambda).
\end{equation*} 
\end{proposition}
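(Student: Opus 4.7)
The plan is to extend the single-vector tail estimate of Lemma \ref{lem: boundedness in measure} to the square-sum $\bigl(\sum_{j=1}^n |T\xi_j|^2\bigr)^{1/2}$ by a Maurey-type symmetrization argument with independent Rademacher random variables. The Hilbert-space structure of $H$ (equivalently, its Rademacher type $2$) enters exactly at the step where the random sum $\sum_j \varepsilon_j \xi_j$ must be kept bounded in norm with high probability.

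I will start by applying Lemma \ref{lem: boundedness in measure} to the continuous linear map $T\colon H \to L^0(X,m)$ to obtain a nonincreasing function $C_0\colon (0,\infty)\to[0,1]$ with $\lim_{\mu\to\infty}C_0(\mu)=0$ satisfying $m\{x:|(T\eta)(x)|\ge \mu\}\le C_0(\mu)$ whenever $\|\eta\|\le 1$. Next, fix $\xi_1,\ldots,\xi_n\in H$ with $\sum_j\|\xi_j\|^2\le 1$ and let $(\varepsilon_j)_{j=1}^n$ be independent $\pm 1$-valued Rademacher variables on an auxiliary probability space $(\Omega,\mathbb{P})$. For each $x\in X$, put $s(x):=\bigl(\sum_j|(T\xi_j)(x)|^2\bigr)^{1/2}$ and $Y_x:=\sum_j\varepsilon_j(T\xi_j)(x)$. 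Orthogonality of the $\varepsilon_j$ yields $\mathbb{E}Y_x^2=s(x)^2$, and a direct expansion using $\mathbb{E}\varepsilon_i\varepsilon_j\varepsilon_k\varepsilon_\ell=0$ unless the indices match in pairs gives $\mathbb{E}Y_x^4\le 3s(x)^4$. The Paley--Zygmund inequality (applied to $Z=Y_x^2$ with parameter $1/2$) then produces absolute constants $c_0,\delta_0>0$, independent of $x,n$ and the $\xi_j$, with $\mathbb{P}(|Y_x|\ge c_0 s(x))\ge\delta_0$ for every $x\in X$. Setting $\eta_\varepsilon:=\sum_j\varepsilon_j\xi_j$ so that $Y_x=(T\eta_\varepsilon)(x)$ by linearity of $T$, and applying Fubini on $\Omega\times X$, I obtain for $E_\lambda:=\{x\in X:s(x)\ge\lambda\}$ the key inequality
$$\delta_0\,m(E_\lambda)\;\le\;\mathbb{E}_\varepsilon\, m\bigl(\{x:|(T\eta_\varepsilon)(x)|\ge c_0\lambda\}\bigr).$$

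To bound the right-hand side I will use the Hilbert identity $\mathbb{E}\|\eta_\varepsilon\|_H^2=\sum_j\|\xi_j\|^2\le 1$. For each $K>1$ set $R_K:=\{\varepsilon:\|\eta_\varepsilon\|\le K\}$; Markov's inequality gives $\mathbb{P}(R_K^c)\le K^{-2}$, and on $R_K$ the single-vector estimate applied to $\eta_\varepsilon/K$ yields $m\{|(T\eta_\varepsilon)(x)|\ge c_0\lambda\}\le C_0(c_0\lambda/K)$. Splitting the expectation over $R_K$ and $R_K^c$ (using the trivial bound $m\le 1$ on the latter) and choosing $K:=\lambda^{1/2}$ gives
$$\delta_0\,m(E_\lambda)\;\le\;C_0(c_0\lambda^{1/2})+\lambda^{-1}.$$
Setting
$$C(\lambda):=\sup_{\lambda'\ge\lambda}\min\Bigl\{1,\,\delta_0^{-1}\bigl(C_0(c_0(\lambda')^{1/2})+(\lambda')^{-1}\bigr)\Bigr\}$$
produces a nonincreasing function $(0,\infty)\to[0,1]$ with $\lim_{\lambda\to\infty}C(\lambda)=0$ that dominates $m(E_\lambda)$, which is the desired inequality.

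I expect no serious obstacle here: the argument is the standard Maurey symmetrization, and the $\Gamma$-action plays no role — the result is a purely scalar, Hilbertian (type-$2$) fact about the single continuous map $T$. The only care needed is in the pointwise Paley--Zygmund step, where uniform constants $c_0,\delta_0$ must be extracted from the fourth-moment bound, and in the optimization $K=K(\lambda)\to\infty$ so that both error terms $C_0(c_0\lambda/K)$ and $K^{-2}$ vanish simultaneously as $\lambda\to\infty$. Once Proposition \ref{prop: 3.3 (d)} is established, it feeds Proposition \ref{prop: GCRF3.1} applied to the convex set $A$ of (\ref{eq: def of A}), which is $\Gamma$-invariant by the covariance relation (\ref{eq: T(pi(g))}), and yields the $\Gamma$-invariant function $\varphi$ appearing in Theorem \ref{thm: Gamma-inv Nikisihin}.
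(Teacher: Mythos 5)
Your proposal is correct and follows essentially the same route as the paper's proof: Rademacher symmetrization, the fourth-moment Paley--Zygmund bound giving $\mathbb{P}\bigl((\sum_j\varepsilon_j x_j)^2\ge\tfrac12\sum_j x_j^2\bigr)\ge\tfrac1{12}$, Fubini, and the Hilbert identity $\mathbb{E}\,\|\sum_j\varepsilon_j\xi_j\|^2=\sum_j\|\xi_j\|^2$. The only (cosmetic) difference is in the final assembly: you average over the good set $R_K=\{\|\eta_\varepsilon\|\le K\}$ and its complement and then optimize $K=\lambda^{1/2}$, whereas the paper exhibits a single sign choice in the intersection of two positive-probability events and resolves the resulting implicit inequality $p(\alpha)\le 24\,C'\bigl((\alpha p(\alpha)/48)^{1/2}\bigr)$ by a case distinction on $p(\alpha)$ versus $\lambda^{-1}$; both yield a valid $C(\lambda)$.
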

This is Condition (d) in \cite[Theorem VI.3.3]{GCRF85}, and the proof can be found in \cite[Theorem VI.3.6]{GCRF85}. 
We give an alternative and direct proof, following a strategy that we learned from G.\ Lowther \cite{Lowther} on MathOverflow. 
First, we need a classical Khintchine type inequality in $L^0$-space. We fix a {\it Rademacher sequence} $(\epsilon_n)_{n=1}^{\infty}$ on a probability space $(\Omega,\mathbb{P})$. That is, $(\epsilon_n)_{n=1}^{\infty}$ is a sequence of independent, identically distributed random variables on $(\Omega,\mathbb{P})$ with the distribution $\mathbb{P}(\epsilon_n=1)=\frac{1}{2}=\mathbb{P}(\epsilon_n=-1)\ (n\in \mathbb{N})$. The symbol $\mathbb{E}$ denotes the expectation value with respect to $\mathbb{P}$. 
\begin{lemma}\label{lem: KhinchinL0} For all $n \in \mathbb N$ and for all $x = (x_j)_{j=1}^n\in \mathbb R^n$, we have
$$
\mathbb{P}\left(\left(\sum_{j=1}^n\epsilon_j x_j\right)^2\ge \frac12 \cdot \sum_{j=1}^n x_j^2\right)\ge \frac{1}{12}.
$$
\end{lemma}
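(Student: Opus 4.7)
The plan is to prove this via a standard second-moment argument, essentially the Paley--Zygmund inequality combined with the classical fourth-moment estimate for Rademacher sums.

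Write $S := \sum_{j=1}^n \epsilon_j x_j$ and $\sigma^2 := \sum_{j=1}^n x_j^2$. The goal becomes a lower bound on $\mathbb{P}(S^2 \ge \tfrac12 \sigma^2)$. First I would compute $\mathbb{E}[S^2]$: since the $\epsilon_j$ are orthonormal in $L^2(\Omega,\mathbb{P})$ (independent, mean zero, unit variance), one gets $\mathbb{E}[S^2] = \sigma^2$. Next I would bound the fourth moment: expanding $\mathbb{E}[S^4] = \sum_{i,j,k,l} x_i x_j x_k x_l \, \mathbb{E}[\epsilon_i\epsilon_j\epsilon_k\epsilon_l]$ and noting that, by independence, the expectation is $1$ when the four indices pair up and $0$ otherwise (here one uses crucially that $\epsilon_j^2 = 1$), one obtains
\[
\mathbb{E}[S^4] \;=\; \sum_j x_j^4 \,+\, 3\sum_{i\neq j} x_i^2 x_j^2 \;\le\; 3\sigma^4.
\]

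Now I would apply the Paley--Zygmund trick to the non-negative random variable $Y := S^2$, with threshold $\theta = \tfrac12$. Splitting the expectation and using Cauchy--Schwarz gives
\[
\sigma^2 \;=\; \mathbb{E}[Y] \;\le\; \tfrac12 \sigma^2 + \mathbb{E}\bigl[Y \cdot \mathbf{1}_{\{Y \ge \tfrac12 \sigma^2\}}\bigr] \;\le\; \tfrac12 \sigma^2 + \sqrt{\mathbb{E}[Y^2]}\,\sqrt{\mathbb{P}(Y \ge \tfrac12 \sigma^2)},
\]
so that
\[
\mathbb{P}\bigl(S^2 \ge \tfrac12 \sigma^2\bigr) \;\ge\; \frac{(\tfrac12 \sigma^2)^2}{\mathbb{E}[S^4]} \;\ge\; \frac{\tfrac14 \sigma^4}{3\sigma^4} \;=\; \frac{1}{12},
\]
which is the claimed bound.

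There is essentially no obstacle: the only thing to get right is the combinatorics of the fourth-moment expansion (i.e.\ the constant $3$ coming from the three ways to pair four indices into two pairs), and the constant $\tfrac{1}{12}$ then falls out automatically from $\tfrac14 \cdot \tfrac13$. The trivial case $\sigma = 0$ can be handled separately (both sides of the inequality inside $\mathbb{P}$ are zero, so the probability is $1$).
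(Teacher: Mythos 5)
Your proof is correct and follows essentially the same route as the paper: normalize (or not) so that $\mathbb{E}[S^2]=\sigma^2$, bound $\mathbb{E}[S^4]\le 3\sigma^4$ via the pairing combinatorics, and apply the Paley--Zygmund inequality with $\theta=\tfrac12$ to $Y=S^2$. The only cosmetic difference is that you rederive Paley--Zygmund from the Cauchy--Schwarz splitting, whereas the paper cites it directly.
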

\begin{proof} Without loss of generality, we may assume that $\sum_{j=1}^n x_j^2 =1$. 
We consider the random variable $f(\epsilon_1,\dots,\epsilon_n) = \left(\sum_{j=1}^n \epsilon_j x_j \right)^2$. Then, using independence, we compute $\mathbb E(f(\epsilon_1,\dots,\epsilon_n)) = \sum_{j=1}^n x_j^2=1$ and
\begin{equation*}
\mathbb E(f(\epsilon_1,\dots,\epsilon_n)^2) = \sum_{j=1}^n x_j^4 + 3 \sum_{i \neq j} x_i^2 x_j^2 = 3 \left( \sum_{j=1}^n x_j^2 \right)^2 - 2 \sum_{j=1}^n x_j^4 \leq 3.
\end{equation*}
Using the Paley-Zygmund inequality, which asserts that if $Y\ge 0$ is a random variable with $0\neq \mathbb{E}(Y^2)<\infty$, then 
$$\mathbb{P}(Y>\theta \cdot \mathbb{E}(Y))\ge (1-\theta)^2 \cdot \frac{\mathbb{E}(Y)^2}{\mathbb{E}(Y^2)} \quad \mbox{for }0\le \theta\le 1,$$
we obtain
$$\mathbb P\left(f(\epsilon_1,\dots,\epsilon_n) \geq \frac12 \cdot \mathbb E(f(\epsilon_1,\dots,\epsilon_n)) \right) \geq \frac14 \cdot \frac{\mathbb E(f(\epsilon_1,\dots,\epsilon_n))^2}{\mathbb E(f(\epsilon_1,\dots,\epsilon_n)^2)} \geq \frac1{12}.$$ This proves the claim.
\end{proof}
  
\begin{proof}[Proof of Proposition \ref{prop: 3.3 (d)}]
Fix $\xi_1,\dots,\xi_n\in H$ with $\sum_{j=1}^n\|\xi_j\|^2\le 1$. 
By Lemma \ref{lem: boundedness in measure}, there exists a non-increasing function $C' \colon (0,\infty) \to [0,1]$, such that $\lim_{ \lambda \to \infty} C'(\lambda)= 0$ satisfying
$$m\left(\left\{x \in X; |(T\xi)(x)| \geq \lambda \right\}\right) \leq C'(\lambda)$$
for all $\xi \in H$ with $\|\xi\| \leq 1$.

For any constant $\alpha > 0$, we compute 
\begin{eqnarray}
&& \int_X\mathbb{P}\left(\left(\sum_{j=1}^n\epsilon_jT(\xi_j)(x)\right)^2\ge \alpha\right)\,\text{d}m(x) \notag \\
&\ge &\int_{\left\lbrace x \in X\mid \frac12 \cdot\sum_{j=1}^n T(\xi_j)(x)^2\ge \alpha \right\rbrace} \mathbb{P}\left(\left(\sum_{j=1}^n\epsilon_j T(\xi_j)(x)\right)^2\ge \frac12 \cdot\sum_{j=1}^nT(\xi_j)(x)^2\right) \,\text{d}m(x)\notag \\
&\ge&\frac1{12} \cdot m \left( \left\lbrace x \in X; \frac12 \cdot\sum_{j=1}^n (T\xi_j(x))^2\ge \alpha \right\rbrace\right)=:\frac{p(\alpha)}{12}.\label{eq: p(alpha)}
\end{eqnarray}
Here in the last inequality we used Lemma \ref{lem: KhinchinL0}. 
For each $\omega \in \Omega$, define 
$$A(\omega):=\left \{x\in X; \left (\sum_{j=1}^n\epsilon_j(\omega)(T\xi_j)(x)\right )^2\ge \alpha\right \}\subset X,$$ and set $B:=\{\omega\in \Omega; m(A(\omega))\ge \frac{p(\alpha)}{24}\}$. If $\mathbb{P}(B)<\frac{p(\alpha)}{24}$, then by Fubini Theorem,
\eqa{
\int_X \mathbb{P}\left (\left (\sum_{j=1}^n\epsilon_j(T\xi_j)(x)\right )^2\ge \alpha\right )\,\text{d}m(x)&\le \mathbb{P}(B)+\int_X\mathbb{P}\left (\omega\in B^{\text{c}}; x\in A(\omega)\right )\,\text{d}m(x)\\
&< \frac{p(\alpha)}{24}+\int_{B^{\text{c}}}\int_{A(\omega)}\,\text{d}m(x)\,\text{d}\mathbb{P}(\omega)\\
&<\frac{p(\alpha)}{12},
}
which contradicts (\ref{eq: p(alpha)}). Therefore $\mathbb{P}(B)\ge \frac{p(\alpha)}{24}$ holds. 
Since $H$ is a Hilbert space, we have the equality
$$
\mathbb E \left(\left\lVert\sum_{j=1}^n\epsilon_j\xi_j\right\rVert^2 \right)= \sum_{j=1}^n\lVert \xi_j\rVert^2\le1.
$$
Therefore by a similar argument, we can show that the set $C:=\{\omega \in \Omega;\ \|\sum_{j=1}^n\varepsilon_j(\omega)\xi_j\|^2\le \frac{48}{p(\alpha)}\}$ satisfies $\mathbb{P}(C)\ge 1-\frac{p(\alpha)}{48}$. In particular, $\mathbb{P}(B\cap C)>0$ holds (if $p(\alpha)=0$, then $C=\Omega$, in which case the conclusion is still true). 
 Let $\omega \in B\cap C\neq \emptyset$ and set  $\xi = \sum_{j=1}^n \epsilon_j(\omega)\xi_j$. Then 
$\|\xi\|^2\le \frac{48}{p(\alpha)}$, so that  
\begin{eqnarray*}
p(\alpha)=m\left(\left\{x \in X;\, \sum_{j=1}^n (T\xi_j(x))^2\ge 2\alpha \right\} \right)&\le& 24 \cdot m( \{x \in X ; (T\xi(x))^2 \geq \alpha \}) \\
&\leq& 24 \cdot C'\left (\left (\frac{\alpha \cdot p(\alpha)}{48}\right )^{\frac{1}{2}}\right )
\end{eqnarray*}
We apply this now with $\lambda^2=2 \alpha$. 
If $p(\alpha)=p(\frac{\lambda^2}{2})>\lambda^{-1}$, then $\alpha p(\alpha)>\frac{\lambda}{2}$ and since $C'(\cdot)$ is non-increasing, we have 
$$p(\alpha)=m\left (\left \{x\in X; \sum_{j=1}^n(T\xi_j(x))^2\ge \lambda^2\right \}\right )\le 24C'\left (\left (\tfrac{\lambda}{96}\right )^{\frac{1}{2}}\right )$$
We can now set $C(\lambda):= \max \left\{ \lambda^{-1}, 24C'\left (\left (\tfrac{\lambda}{96}\right )^{\frac{1}{2}}\right )\right\}$. This finishes the proof.
\end{proof}
We are now ready to prove the existence of an equivariant Maurey--Nikishin factorization. 
\begin{proof}[Proof of Theorem \ref{thm: Gamma-inv Nikisihin}] 
This part of the proof is exactly the same as Maurey's argument given as in \cite[Theorem 3.3]{GCRF85}. We include the proof for the reader's convenience. Define a subset $A'$ of $A$ (see (\ref{eq: def of A})) by 
$$A':=\left\{ \sum_{j=1}^n |(T\xi_j)|^2; n \in \mathbb N, \xi_1,\dots,\xi_n \in H, \sum_{j=1}^n \|\xi_j\|_{\pi}^2 \leq 1 \right\}\subset L^0(X,m)_+,$$
where $\|\xi\|_{\pi}:=\sup_{s\in \Gamma}\|\pi(s)\xi\|\ (\xi\in H)$. Then since $\|\pi(s)\xi\|_{\pi}=\|\xi\|_{\pi}\ (s\in \Gamma,\,\xi\in H)$, by  (\ref{eq: T(pi(g))}), the set $A'$ is globally $\Gamma$-invariant and convex.  Since $T$ is continuous and using Proposition \ref{prop: 3.3 (d)}, a very similar argument as in the proof of Lemma \ref{lem: boundedness in measure} (ii)$\Rightarrow $(iii) implies the following: for every $t>0$, there exists $K(t)>0$ such that for any finite sequence $\xi_1,\dots,\xi_n\in H$, one has (using $(\psi^2)^{\star}=(\psi^{\star})^2)$)
\begin{equation*}
\left (\sum_{j=1}^n(T\xi_j)^2\right )^{\star}(t)\le K(t)^2\sum_{j=1}^{n}\|\xi_j\|^2.
\end{equation*}By Proposition \ref{prop: GCRF3.1}, for every $\varepsilon>0$, there exists a $\Gamma$-invariant measurable subset $S(\varepsilon)\subset X$ with $m(X\setminus S(\varepsilon))<\varepsilon$ satisfying (\ref{eq: sup_varphi}) with $A$ replaced by $A'$.
In particular, this shows by (\ref{eq: sup_varphi}) that for every $\varphi\in A'$,  $$\int_{S(\varepsilon)}\varphi(x)\,\text{d}m(x)\le 2K\left(\tfrac{\varepsilon}{2}\right)^2$$ holds. Therefore for every $\xi\in H$, $\|\xi\|_{\pi}^2\le \|\pi\|^2\|\xi\|^2$ (where $\|\pi\|:=\sup_{s\in \Gamma}\|\pi(s)\|$), so that we have  
\begin{equation*}
\int_{S(\varepsilon)}[T\xi](x)^2\ \text{d}m(x)\le 2K(\tfrac{\varepsilon}{2})^2\|\pi\|^2\|\xi\|^2.
\end{equation*}
Set $C_\varepsilon:=2K\left(\frac{\varepsilon}{2} \right)^2\|\pi\|^2>0$. 
Then let
\begin{equation*}
\varphi(x):=\sum_{j=1}^{\infty}k_j1_{S(1/j)}(x),\ \ \ \ \ x\in X,
\end{equation*} 
where $k_j>0$ and $\sum_{j=1}^{\infty}k_jC_{1/j}=1$. Since $C_{\varepsilon}$ is non-increasing in $\varepsilon$, $k_j\stackrel{j\to \infty}{\to}0$, which implies that $\varphi\in L^{\infty}(X,m)$. Also, $\varphi$ is $m$-a.e. positive and $\Gamma$-invariant. Moreover we have:
\eqa{
\int_X \varphi (x)[T\xi](x)^2\ \text{d}m(x)&=\sum_{j=1}^{\infty}k_j\int_{S_{1/j}}[T\xi](x)^2\ \text{d}m(x)\\
&\le \sum_{j=1}^{\infty}k_jC_{1/j}\|\xi\|^2=\|\xi\|^2.
} 
This finishes the proof. 
\end{proof}
Recall from the non-commutative integration theory that a self-adjoint operator $A\in \widetilde{M}$ is square-integrable, if 
\[\|A\|_2^2:=\int_{\mathbb{R}}\lambda^2\,\text{d}(\tau(E_A(\lambda))<\infty.\]
Here, $A=\int_{\mathbb{R}}\lambda\,\text{d}E_A(\lambda)$ is the spectral decomposition of $A$. 
Then $T\in \widetilde{M}$ is called square-integrable if $|T|=(T^*T)^{\frac{1}{2}}$ is square-integrable. 
The set of all square-integrable operators can naturally be identified with $L^2(M)$ (see \cite{Nelson74} for details). 
We need the following simple but useful lemma. 
\begin{lemma}\label{lem: L^2homeo}
Let $S\colon H\to \widetilde{M}$ be an $\mathbb{R}$-linear homeomorphism onto its range. If $S(\xi)\in L^2(M)$ for every $\xi\in H$, then regarded as $S\colon H\to L^2(M)$, $S$ is an $\mathbb{R}$-linear homeomorphism onto its range, where $L^2(M)$ is equipped with the $L^2$-topology (instead of the $\tau$-measure topology).   
\end{lemma}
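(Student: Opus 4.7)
My plan is to deduce $L^2$-continuity of $S$ from the closed graph theorem, and then to read off continuity of the inverse from the elementary observation that the $L^2$-norm topology on $L^2(M)$ is finer than the $\tau$-measure topology. Both $H$ and $L^2(M)$ are (real) Banach spaces, so the closed graph theorem is available for the $\mathbb{R}$-linear map $S$.

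The preliminary fact I would record is that the natural inclusion $L^2(M)\hookrightarrow \widetilde{M}$ is continuous. Given $A\in L^2(M)$ and $\varepsilon>0$, let $p:=E_{|A|}([0,\varepsilon])$. Using the polar decomposition $A=U|A|$, we have $\|Ap\|\le \||A|p\|\le \varepsilon$, while Chebyshev's inequality gives $\tau(p^{\perp})\le \|A\|_2^2/\varepsilon^2$. Hence $A$ lies in the $\tau$-measure neighborhood $N(\varepsilon,\|A\|_2^2/\varepsilon^2)$ of $0$, so $L^2$-convergence implies convergence in the $\tau$-measure topology.

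Now I would verify that the graph of $S\colon H\to L^2(M)$ is closed. Suppose $\xi_n\to \xi$ in $H$ and $S(\xi_n)\to R$ in $L^2(M)$. Continuity of $S\colon H\to \widetilde{M}$ yields $S(\xi_n)\to S(\xi)$ in the $\tau$-measure topology, while the preceding paragraph yields $S(\xi_n)\to R$ in the same topology. Since $\widetilde{M}$ is Hausdorff, $R=S(\xi)$, and the closed graph theorem then ensures $S\colon H\to L^2(M)$ is continuous.

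For continuity of the inverse on $S(H)$, suppose $S(\xi_n)\to S(\xi)$ in $L^2(M)$. By the preliminary fact this convergence also holds in the $\tau$-measure topology, and the assumption that $S\colon H\to \widetilde{M}$ is a homeomorphism onto its range forces $\xi_n\to \xi$ in $H$. I do not foresee a genuine obstacle here: the argument is essentially a soft application of the closed graph theorem, and its success relies only on the compatibility of the two topologies on $L^2(M)$ encoded in the continuous inclusion $L^2(M)\hookrightarrow \widetilde{M}$.
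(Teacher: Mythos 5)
Your proof is correct and follows essentially the same route as the paper's: first establish that on $L^2(M)$ the $L^2$-topology is finer than the $\tau$-measure topology, then get $L^2$-continuity of $S$ from the closed graph theorem, and finally read off continuity of the inverse from the hypothesis that $S$ is a homeomorphism onto its range for the measure topology. The only cosmetic difference is in the preliminary step: you verify the continuity of the inclusion $L^2(M)\hookrightarrow \widetilde{M}$ directly from the definition of the neighborhoods $N(\varepsilon,\delta)$ via a Chebyshev estimate on spectral projections, whereas the paper computes $\|(A-i)^{-1}-(0-i)^{-1}\|_2\le \|A\|_2$ for self-adjoint $A$ and invokes the coincidence of the $\tau$-measure topology with the strong resolvent topology.
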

\begin{proof}
We first show that in $L^2(M)$, the $L^2$-topology is stronger than the $\tau$-measure topology. 
Let $A_n\in L^2(M)\ (n\in \mathbb{N})$ be such that $\|A_n\|_2\stackrel{n\to \infty}{\to} 0$. 
Then 
\eqa{
\left \|(A_n-i)^{-1}-(0-i)^{-1}\right \|_2^2&=\int_{\mathbb{R}}\left |\frac{1}{\lambda-i}-\frac{1}{0-i}\right |^2\,\text{d}\tau(E_{A_n}(\lambda))\\
&=\int_{\mathbb{R}}\frac{\lambda^2}{\lambda^2+1}\,\text{d}\tau(E_{A_n}(\lambda))\\
&\le \int_{\mathbb{R}}\lambda^2\,\text{d}\tau(E_{A_n}(\lambda))\\
&=\|A_n\|_2^2\stackrel{n\to \infty}{\to}0.
}
This shows the claim. Note also that this proves that $S^{-1}\colon S(H)\ni S(\xi)\to \xi\in H$ is continuous: 
\eqa{
\|S(\xi_n)\|_2\stackrel{n\to \infty}{\to}0&\Rightarrow S(\xi_n)\stackrel{n\to \infty}{\to} 0\ \text{SRT}\\
&\Leftrightarrow \|\xi_n\|\stackrel{n\to \infty}{\to} 0.
}
Next, we show that $S\colon H\to L^2(M)$ is $L^2$-continuous. 
Since $H, L^2(M)$ are Banach spaces, it suffices to show that $S$ is closed, by the Closed Graph Theorem. 
So assume that $\xi_n\in H$ converges to 0, and $S(\xi_n)$ converges in $L^2$ to some $A\in L^2(M)$. Then $\|\overline{S(\xi_n)-A}\|_2\stackrel{n\to \infty}{\to} 0$, whence by the first part, we also have $\overline{S(\xi_n)-A}\stackrel{n\to \infty}{\to} 0$ in SRT. Since SRT is a vector space topology in $\widetilde{M}$ (because $M$ is finite), we obtain $S(\xi_n)\stackrel{n\to \infty}{\to} A$ in SRT.  
However, $S$ is continuous in SRT, whence $S(\xi_n)\stackrel{n\to \infty}{\to} 0$ in SRT. Since SRT is Hausdorff, $A=0$. Therefore $S$ is closed, whence $L^2$-continuous.  
\end{proof}
\begin{remark} 
In the above lemma, we did not claim that in $L^2(M)$, SRT and the $L^2$-topology are the same. In fact the two topologies are different (but they agree on the image of $S$, i.e., $S$ defines a strong embedding in the sense of \cite[Definition 6.4.4]{KaltonAlbiac06}).  
To see this in the commutative case, let $(\Omega,\mu)$ be a diffuse probability space, and choose a sequence $(B_n)_{n=1}^{\infty}$ of measurable subsets of $\Omega$ such that $0<\mu(B_n)\stackrel{n\to \infty}{\to}0$. 
Define $f_n\in L^2(\Omega,\mu)$ by 
\[f_n(x)=\frac{1}{\mu(B_n)^{\frac{1}{2}}}1_{B_n}(x),\ \ \ \ \ x\in \Omega.\]
Then $\|f_n\|_2^2=1$ for $n\in \mathbb{N}$. 
On the other hand, 
\eqa{
\|(f_n-i)^{-1}-(0-i)^{-1}\|_{L^2(\Omega,\mu)}^2
&=\int_{B_n}\frac{1}{1+\mu(B_n)}\,\text{d}\mu(x)\\
&\le \int_{B_n}1\,\text{d}\mu(x)=\mu(B_n)\stackrel{n\to \infty}{\to}0
}
This shows that $f_n\stackrel{n\to \infty}{\to}0$ (SRT) in $L^0(\Omega,\mu)$, but $\|f_n\|_2=1\not\to 0$. 
\end{remark}
\begin{proof}[Proof of Theorem \ref{thm: keythm}]
By Theorem \ref{thm: Gamma-inv Nikisihin}, there exists an $m$-a.e.\ positive $\varphi\in L^0(X,m)$ such that 
(\ref{eq: Equivariant Nikishin}) holds. We define an $\mathbb{R}$-bilinear form $\nai{\cdot}{\cdot}'_{\pi}\colon H\times H\to \mathbb{R}$ by 
\begin{equation*}
\nai{\xi}{\eta}'_{\pi}:=\int_X \varphi(x)[T\xi](x)[T\eta](x)\,\text{d}m(x),\ \ \ \ \xi,\eta\in H. 
\end{equation*}
If $\nai{\xi}{\xi}_{\pi}'=0$, then $\varphi |T\xi|^2=0$ a.e., and since $\varphi>0$ a.e., $T\xi=0$ a.e. But since $T$ is injective, we get $\xi=0$. Thus this is an inner product. Let $s\in \Gamma$. 
Then for $\xi,\eta\in H$, the $\Gamma$-invariance of $\varphi$ and $m$ show that 
\eqa{
\nai{\pi(s)\xi}{\pi(s)\eta}_{\pi}'&=\int_X \varphi(x)[T\xi](s^{-1}x)[T\eta](s^{-1}x)\ \text{d}m(x)\\
&=\int_X\varphi(sx)[T\xi](x)[T\eta](x)\ \text{d}m(sx)\\
&=\int_X\varphi(x)[T\xi](x)[T\eta](x)\ \text{d}m(x).
}
Therefore $\nai{\cdot}{\cdot}_{\pi}'$ is $\Gamma$-invariant. We show that this inner product generates the same topology on $H$ as the original norm. 
Define $S\colon H\to L^2(X,m)\subset L^2(M)$ by $S(\xi):=\varphi^{1/2}T(\xi)=M_{\varphi^{1/2}}T(\xi)$. Note that the multiplication operator $M_{\varphi^{1/2}}$ is a (possibly unbounded) operator affiliated with $M$. 
Therefore, the function $$M_{\varphi^{1/2}}\colon \widetilde{M}\ni X\mapsto \varphi^{1/2}X\in \widetilde{M}$$ is continuous with continuous inverse $M_{\varphi^{-1/2}}$, so it is a homeomorphism. Therefore we obtain that $S\colon H\to \widetilde{M}$ is also an $\mathbb{R}$-linear homeomorphism onto its range. Moreover, we have the inclusion $S(H)\subset L^2(X,m)\subset L^2(M)$. Thus by Lemma \ref{lem: L^2homeo}, $S\colon H\to L^2(M)$ is a homeomorphism onto its range where $L^2(M)$ is equipped with the $L^2$-topology.  
Let $(\xi_n)_{n=1}^{\infty}$ be a sequence in $H$. 
Then 
\eqa{
\|\xi_n\|\stackrel{n\to \infty}{\to}0&\Leftrightarrow \|S(\xi_n)\|_{L^2(M)} =\|\xi\|_{\pi}'\stackrel{n\to \infty}{\to}0.
}
This shows that $\|\cdot\|_{\pi}'$ generates the same topology as $\|\cdot\|$. 
Finally, we complexify the inner product. Define a new sesqui-linear form $\nai{\cdot}{\cdot}_{\pi}\colon H\times H\to \mathbb{C}$ by
\begin{equation}
\nai{\xi}{\eta}_{\pi}:=\frac{1}{4}\{\nai{\xi}{\eta}_{\pi}'+\nai{i\xi}{i\eta}_{\pi}'+i\nai{\xi}{i\eta}_{\pi}'-i\nai{i\xi}{\eta}_{\pi}'\},\ \ \ \ \xi,\eta\in H.\label{eq: complexify inner product}
\end{equation}
Since $\nai{\cdot}{\cdot}_{\pi}'$ is $\mathbb{R}$-bilinear, so is $\nai{\cdot}{\cdot}_{\pi}$. But if $\xi,\eta\in H$, then 
\eqa{
\nai{i\xi}{\eta}_{\pi}&=\frac{1}{4}\{\nai{i\xi}{\eta}_{\pi}'+\nai{-\xi}{i\eta}_{\pi}'+i\nai{i\xi}{i\eta}_{\pi}'+i\nai{\xi}{\eta}_{\pi}'\}\\
&=\frac{1}{4}\{i\nai{\xi}{\eta}_{\pi}'+i\nai{i\xi}{i\eta}_{\pi}'-\nai{\xi}{i\eta}_{\pi}'+\nai{i\xi}{\eta}_{\pi}'\}\\
&=i\nai{\xi}{\eta}_{\pi},\\
\nai{\xi}{i\eta}'&=\frac{1}{4}\{\nai{\xi}{i\eta}_{\pi}'-\nai{i\xi}{\eta}_{\pi}'-i\nai{\xi}{\eta}_{\pi}'-i\nai{i\xi}{i\eta}_{\pi}'\}\\
&=\frac{1}{4}\{-i\nai{\xi}{\eta}_{\pi}'-i\nai{i\xi}{i\eta}_{\pi}'+\nai{\xi}{i\eta}_{\pi}'-\nai{i\xi}{\eta}_{\pi}'\}\\
&=-i\nai{\xi}{\eta}_{\pi}.
}
Thus $\nai{\cdot}{\cdot}_{\pi}$ is sesqui-linear. 
Note that 
\[\|\xi\|_{\pi}^2=\frac{1}{4}(\|\xi\|_{\pi}')^2+\frac{1}{4}(\|i\xi\|_{\pi}')^2,\]
so that $\nai{\cdot}{\cdot}_{\pi}$ also generates the same Hilbert space topology. 
Since $\pi(s)\ (s\in \Gamma)$ is $\mathbb{C}$-linear and preserves $\nai{\cdot}{\cdot}_{\pi}'$, it is also clear that $\pi(s)$ preserves $\nai{\cdot}{\cdot}_{\pi}$ as well. By Lemma \ref{lem: Dixmier}, this shows that $\pi$ is unitarizable. 
\end{proof}

Note that in our construction $G=H\rtimes_{\pi}\Gamma$, it is essential that $\Gamma$ is discrete. It is therefore of interest to consider the following question: 
\begin{question}
Is there a connected, unitarily representable SIN Polish group which is not of finite type? 
\end{question}
Note also that in the proof of Theorem \ref{thm: keythm}, we have shown the following result, which is of independent interest:
\begin{corollary}\label{cor: Cor to MaureyNikishin} Let $H$ be a separable real Hilbert space, and let $f\in \mathcal{P}(H)$. Then there exist 
\begin{itemize}
\item[{\rm{(i)}}] a compact metrizable space $X$ and a Borel probability measure $m$ on $X$,
\item[{\rm{(ii)}}] a bounded operator $T\colon H\to L^2(X,m;\mathbb{R})$, and 
\item[{\rm{(iii)}}] a positive measurable function $\varphi\in L^{0}(X,m)$, 
\end{itemize}
such that 
\begin{equation*}
f(\xi)=\int_Xe^{i\varphi(x)[T\xi](x)}\ {\rm{d}}m(x),\ \ \  \ \ \ \xi\in H.\end{equation*}
Moreover:
\begin{itemize}
\item[{\rm{(iv)}}] If $f$ generates a neighborhood basis of $0$, then $T$ can be chosen to be a real-linear homeomorphism onto its range. 
\item[{\rm{(v)}}] We may arrange $T$ and $\varphi$ such that if $\mathcal{G}$ is a uniformly bounded subgroup of the group $\{u\in {\rm{GL}}(H); f(u\xi)=f(\xi)\ (\xi\in H)\}$ of invertible operators on $H$ fixing $f$ pointwise, then there exists a continuous homomorphism $\beta$ from $\mathcal{G}$ to the group ${\rm{Aut}}(X,m)$ of all $m$-preserving automorphisms of $X$ with the weak topology, such that 
\begin{equation*}
[T(u\xi)](x)=\beta(u)[T(\xi)](x),\ \ \ \varphi(\beta(u)x)=\varphi(x),\ \ \ \ \ u\in \mathcal{G},\ \xi\in H,\ m\text{-a.e.\ }x\in X.
\end{equation*} 
\end{itemize}
\end{corollary}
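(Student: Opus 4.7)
The plan is to derive Corollary \ref{cor: Cor to MaureyNikishin} from the machinery developed for the proof of Theorem \ref{thm: keythm}, by applying it to the GNS datum of $f$ in place of a putative embedding of a semidirect product into a II$_1$-factor.

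First, I would perform GNS on $f\in\mathcal{P}(H)$, viewed as a normalized continuous positive definite function on the additive topological abelian group $(H,+)$. This produces a strongly continuous unitary representation $U\colon H\to\mathcal{U}(K)$ on a separable Hilbert space $K$ together with a cyclic unit vector $\Omega$ such that $f(\xi)=\langle U(\xi)\Omega,\Omega\rangle$ for every $\xi\in H$. Because $H$ is abelian, $\mathcal{M}:=U(H)''\subset\mathbb{B}(K)$ is commutative, so the cyclic vector $\Omega$ is automatically separating and the vector state $\omega_{\Omega}$ is a faithful normal tracial state on $\mathcal{M}$. Hence $(\mathcal{M},\omega_{\Omega})\cong (L^{\infty}(X,m),\int\cdot\,\mathrm{d}m)$ for a standard probability space $(X,m)$, which can be realized on a compact metrizable $X$ with $\Omega\leftrightarrow\mathbf{1}$. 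Next, as in the proof of Proposition \ref{prop: covariance}, Stone's theorem gives for each $\xi\in H$ a unique self-adjoint $T_0(\xi)\in\widetilde{\mathcal{M}}_{\mathrm{sa}}\cong L^{0}(X,m;\mathbb{R})$ with $U(t\xi)=e^{itT_0(\xi)}$, and the resulting map $T_0\colon H\to L^{0}(X,m;\mathbb{R})$ is continuous and real-linear. In the commutative functional-calculus picture the GNS identity at $t=1$ reads
\begin{equation*}
f(\xi)=\int_X e^{i[T_0\xi](x)}\,\mathrm{d}m(x),\qquad\xi\in H.
\end{equation*}

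Third, I would apply the equivariant Maurey--Nikishin factorization of Theorem \ref{thm: Gamma-inv Nikisihin} to $T_0$. The proof of that theorem carries through verbatim with any uniformly bounded group $\mathcal{G}\le\mathrm{GL}(H)$ in place of $\pi(\Gamma)$, provided that $\mathcal{G}$ acts on $(X,m)$ by $m$-preserving transformations and $T_0$ is $\mathcal{G}$-equivariant; both are supplied by GNS under the hypothesis of (v). Indeed, for $u\in\mathcal{G}$ with $f(u\xi)=f(\xi)$, uniqueness of GNS yields $V_u\in\mathcal{U}(K)$ with $V_u\Omega=\Omega$ and $V_uU(\xi)V_u^{*}=U(u\xi)$, so $\mathrm{Ad}(V_u)$ descends to a trace-preserving $*$-automorphism of $(\mathcal{M},\omega_{\Omega})$, i.e.\ an $m$-preserving point transformation $\beta(u)$ of $X$; applying Stone's theorem to the identity $V_ue^{itT_0(\xi)}V_u^{*}=e^{itT_0(u\xi)}$ gives $T_0(u\xi)=\beta(u)T_0(\xi)$, and continuity of $\beta\colon\mathcal{G}\to\mathrm{Aut}(X,m)$ in the weak topology is inherited from SOT-continuity of $U$ together with uniform boundedness of $\mathcal{G}$. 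Equivariant Maurey--Nikishin then yields a $\mathcal{G}$-invariant positive measurable $\varphi_0\in L^{0}(X,m)$ with $\int_X\varphi_0(x)[T_0\xi](x)^{2}\,\mathrm{d}m(x)\le\|\xi\|^{2}$ for every $\xi\in H$. Setting $T\xi:=\varphi_0^{1/2}T_0\xi\in L^{2}(X,m;\mathbb{R})$ and $\varphi:=\varphi_0^{-1/2}$ produces a bounded linear $T$ and a positive $\varphi\in L^{0}(X,m)$ with $[T_0\xi]=\varphi\cdot[T\xi]$ almost everywhere, from which the required integral formula for $f$ and clause (v) follow at once.

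Finally, for (iv), if $f$ generates a neighborhood basis of $0$ then $U$ is an embedding of topological groups onto its range, because $\xi_n\to 0\Leftrightarrow f(\xi_n)\to 1\Leftrightarrow\|U(\xi_n)\Omega-\Omega\|\to 0$, and the last condition combined with $\|U(\xi_n)\|\le 1$ and cyclicity of $\Omega$ forces $U(\xi_n)\to I$ in the strong operator topology. The argument from the proof of Proposition \ref{prop: covariance} then shows $T_0\colon H\to L^{0}(X,m;\mathbb{R})$ is a real-linear homeomorphism onto its range, and Lemma \ref{lem: L^2homeo} upgrades this to the corresponding property for the bounded $T\colon H\to L^{2}(X,m;\mathbb{R})$. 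The main obstacle is verifying that Theorem \ref{thm: Gamma-inv Nikisihin}, proven in Section 3 for a countable discrete $\Gamma$, indeed extends to an arbitrary uniformly bounded group $\mathcal{G}$; this is routine because that proof only uses the convexity and $\mathcal{G}$-invariance of the auxiliary norm $\|\xi\|_{\mathcal{G}}:=\sup_{u\in\mathcal{G}}\|u\xi\|$ and of the relevant subsets of $L^{0}(X,m)_{+}$, the minimax lemma, and the uniform convexity of $L^{2}(X,m)$, all of which remain intact in the more general setting.
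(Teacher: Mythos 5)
Your proposal is correct and follows essentially the same route the paper intends: the authors derive this corollary directly from the argument for Theorem \ref{thm: keythm}, and your substitution of the GNS representation of $f$ on the abelian group $(H,+)$ (whose cyclic vector is automatically separating and tracial for the commutative algebra $U(H)''$) for the II$_1$-factor embedding, followed by Stone's theorem, Proposition \ref{prop: trace is integration}, and the equivariant Maurey--Nikishin factorization with $\mathcal{G}$ in place of $\pi(\Gamma)$, is exactly the intended adaptation. Your closing remarks on why Theorem \ref{thm: Gamma-inv Nikisihin} and Lemma \ref{lem: L^2homeo} carry over unchanged are accurate.
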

One may regard Corollary \ref{cor: Cor to MaureyNikishin} as an equivariant version of the classical result (cf. \cite[Lemma 4.2]{AMM85}) that any $f\in \mathcal{P}(H)$ is of the form 
\begin{equation*}
f(t\xi)=\int_Xe^{it[U\xi](x)}\,\text{d}m(x),\ \ \ \ \xi\in H,\ t\in \mathbb{R}
\end{equation*}
for a probability space $(X,m)$ and a continuous linear map $U\colon H\to L^0(X,m)$. 

\begin{remark}
A typical example of the above corollary is the positive definite function $f(\xi)=e^{-\|\xi\|^2/4}$.
To see this, let us fix an orthogonal basis $\{\xi_i\}$ for $H$.
In this case, we can choose $X=(\dot{\mathbb{R}})^{\mathbb{N}}$ 
the product of an infinite number of copies of the one-point compactification of $\mathbb{R}$,
$m=$ the infinite dimensional Gaussian measure,
$T(\xi_i):X\ni x\mapsto x_i\in\dot{\mathbb{R}}$
and $\varphi=1$.
The triple $(X,m,T)$ is called a Gaussian random process indexed by $H$.
For more detatils, see \cite{simon}.
\end{remark}

\begin{remark}
It is clear from the proof that it applies whenever $H$ is a Banach space of Rademacher type $2$ (see \cite[$\S$6]{KaltonAlbiac06} for definitions). Thus we have shown as a corollary that a Banach space $B$ is isomorphic (as a Banach space) to a Hilbert space if and only if a positive definite function generates the topology of $B$ and is of type $2$. Moreover, the isomorphism can be made equivariant with respect to a group of symmetries that preserve the positive definite function. One may compare this result with Kwapie\'n's Theorem \cite{Kwapien72} that $B$ is isomorphic to a Hilbert space if and only if $B$ is of type 2 and cotype 2. Note that the Kwapie\'n Theorem does not produce an equivariant isomorphism but requires less in the sense that any Banach space isomorphic to a subspace of $L^0(X,m)$ is of cotype 2, but not all cotype 2 Banach spaces can be isomorphic to a subspace of $L^0(X,m)$ (see the remark after \cite[Corollary 8.17]{BenyaminiLindenstrauss00} and \cite{Pisier78}). 

\end{remark}
\section*{Appendix}

The following two results are well-known, but we include the proofs for the reader's convenience. 
\begin{proposition}\label{prop: trace is integration}
Let $A$ be a commutative von Neumann algebra with separable predual, and let $\tau$ be a normal faithful tracial state on $A$. Let $G$ be a Polish group, and let $\beta\colon G\to {\rm{Aut}}(A)$ be a continuous $\tau$-preserving action.  
Then there exists a separable compact metrizable space $X$, a Borel probability measure $m$ on $X$, an $m$-preserving continuous action $G\times X\ni (g,x)\mapsto gx\in X$ and a $*$-isomorphism $\Phi\colon A\to L^{\infty}(X,m)$ such that 
\begin{align}
\tau(a)&=\int_X\Phi(a)(x)\,{\rm{d}}m(x),\ \ \ \ a\in A,\label{eq: trace = integration}\\
\Phi(\beta_g(a))(x)&=\Phi(a)(g^{-1}x),\ \ \ \ g\in G,\,\mu{\rm{-a.e.}}\, a\in A.\label{eq: covariance of action}
\end{align}
Moreover, the $G$-action is continuous as a map $G\to {\rm{Aut}}(X,m)$, where the group ${\rm{Aut}}(X,m)$ of all $m$-preserving automorphism of $X$ is equipped with the weak topology. 
\end{proposition}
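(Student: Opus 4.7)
The plan is to represent $A$ as $L^\infty(X,m)$ via Gelfand duality applied to a separable weak-$*$ dense $C^*$-subalgebra, and then lift the automorphism action to a point action on $X$.

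First, since $A_*$ is separable, $A$ contains a countable $\sigma$-weakly dense subset. In the case used in the paper, where $G=\Gamma$ is countable discrete, I would take the $\Gamma$-orbit of such a set and let $B$ be the unital $C^*$-subalgebra it generates; then $B$ is separable, $\sigma$-weakly dense in $A$, and $\Gamma$-invariant as a $C^*$-algebra. By Gelfand duality $B\cong C(X)$ for a compact metrizable space $X$, and the restriction $\tau|_B$ corresponds via the Riesz representation theorem to a regular Borel probability measure $m$ on $X$.

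Next, I would identify $A$ with $L^\infty(X,m)$ by passing to GNS: because $B$ is $\sigma$-weakly dense in $A$ and $C(X)=B$ is $\sigma$-weakly dense in $L^\infty(X,m)$, the canonical $*$-isomorphism $B\cong C(X)$ extends via $\sigma$-weak closure inside $\mathbb{B}(L^2(A,\tau))=\mathbb{B}(L^2(X,m))$ to the desired $*$-isomorphism $\Phi\colon A\to L^\infty(X,m)$. The equality $\tau(a)=\int_X\Phi(a)\,\mathrm{d}m$ holds on $B$ by construction of $m$ and extends by $\sigma$-weak continuity to all of $A$, establishing (\ref{eq: trace = integration}).

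For the $G$-action, each $\tilde\beta_g:=\Phi\circ\beta_g\circ\Phi^{-1}$ is an $m$-preserving $*$-automorphism of $L^\infty(X,m)$. In the $\Gamma$-invariant setup above, $\tilde\beta_s$ already preserves $C(X)$ and is therefore dual to an honest self-homeomorphism of $X$; hence joint continuity of $\Gamma\times X\to X$ is automatic because $\Gamma$ is discrete. In the general Polish setting one instead invokes the classical point-realization theorem for $*$-automorphisms of $L^\infty$ over a standard probability space to produce a unique (mod null sets) $m$-preserving Borel automorphism $T_g$ with $\tilde\beta_g(f)=f\circ T_g^{-1}$, yielding the homomorphism $G\to\mathrm{Aut}(X,m)$ and covariance (\ref{eq: covariance of action}).

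Finally, continuity of $g\mapsto T_g$ in the weak topology on $\mathrm{Aut}(X,m)$ is by definition SOT-continuity of the Koopman unitaries on $L^2(X,m)$. Since $\beta$ is $\tau$-preserving and continuous, its Koopman representation $U\colon G\to\mathcal{U}(L^2(A,\tau))$ is SOT-continuous, and the identification $L^2(A,\tau)=L^2(X,m)$ transports this to the desired continuity. The main obstacle in full Polish generality is upgrading to joint continuity $G\times X\to X$, since $B$ cannot generally be chosen $G$-invariant with a norm-continuous $G$-action; one must then appeal to a theorem of Varadarajan to replace $X$ by a compact metric model on which $G$ acts jointly continuously. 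For the application in the present paper, however, $G$ is discrete, so this issue does not arise.
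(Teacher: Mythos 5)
Your treatment of the discrete case is correct and is essentially the paper's own argument: orbit a countable dense set, generate a separable $\Gamma$-invariant unital C$^*$-subalgebra, pass to its Gelfand spectrum, represent $\tau$ by a Borel probability measure via Riesz, and extend the Gelfand isomorphism to a trace-preserving $*$-isomorphism $A\to L^{\infty}(X,m)$ by a GNS/unitary-implementation argument (this is exactly the paper's Lemma \ref{lem: extension}); for discrete $\Gamma$ the dual action on $X$ is by homeomorphisms and joint continuity is free. Since the paper only ever applies the proposition with $G=\Gamma$ countable discrete, that part of your proposal is sound and matches the source.

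The gap is in the general Polish case, which is how the proposition is stated. Your fallback --- point-realize the Boolean action $g\mapsto\tilde\beta_g$ and then invoke Varadarajan to obtain a compact model --- is not available here: Mackey-type point-realization and Varadarajan's compact-model theorem are locally-compact-second-countable results, and for genuinely Polish groups a Boolean (near-)action may admit no spatial model at all (Glasner--Tsirelson--Weiss; e.g.\ L\'evy groups carry nontrivial Boolean actions with no point realization), so this route cannot be repaired. The paper instead does precisely what you claim cannot be done: it passes to the C$^*$-subalgebra $A_0=\{x\in A:\ \lim_{g\to e}\|\beta_g(x)-x\|=0\}$ of norm-continuity vectors (asserted to be dense in $A$, and trivially all of $A$ when $G$ is discrete), takes a countable $*$-strongly dense subset $\{a_n\}$ of $A_0$, and sets $C=C^*(\{\beta_g(a_n):\ g\in G,\ n\in\mathbb N\}\cup\{1\})$; norm-continuity of the orbit maps plus separability of $G$ makes $C$ norm-separable and $G$-invariant, so $G$ acts on $X=\mathrm{Spec}(C)$ by honest homeomorphisms via the dual action on pure states (no measurable selection needed), and separate continuity of $G\times X\to X$ upgrades to joint continuity by the standard theorem on separately continuous Polish group actions. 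To prove the proposition as stated you need this device (or you must strengthen the continuity hypothesis so that $A_0=A$); your concluding identification of the Koopman continuity with weak-topology continuity of $G\to\mathrm{Aut}(X,m)$ is fine and agrees with the paper's last step.
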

\begin{lemma}\label{lem: extension}
Let $M,N$ be von Neumann algebras, and let $\psi,\varphi$ be normal faithful states on $M,N$, respectively. Let $M_0$ (resp. $N_0$) be a $*$-strongly dense subalgebra of $M$ (resp. $N$). 
 If $\alpha\colon M_0\to N_0$ is a $*$-isomorphism such that $\varphi(\alpha(x))=\psi(x)$ for every $x\in M_0$, then $\alpha$ can be extended to a $*$-isomorphism from $M$ onto $N$ such that $\varphi (\alpha(x))=\psi(x)\ (x\in M)$. 
\end{lemma}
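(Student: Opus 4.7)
My plan is to carry out a GNS-based argument. Form the GNS Hilbert spaces $H_\psi$ and $H_\varphi$ with cyclic and separating vectors $\xi_\psi, \xi_\varphi$ corresponding to the faithful normal states $\psi$ and $\varphi$, and identify $M\subseteq \mathbb{B}(H_\psi)$ and $N\subseteq \mathbb{B}(H_\varphi)$ via the associated GNS representations (which are faithful and normal). Define a linear map $U_0\colon M_0\xi_\psi\to N_0\xi_\varphi$ by $U_0(x\xi_\psi):=\alpha(x)\xi_\varphi$. Both well-definedness and isometry come from the trace-preservation hypothesis, since
\[
\|U_0(x\xi_\psi)\|^2=\varphi(\alpha(x)^*\alpha(x))=\varphi(\alpha(x^*x))=\psi(x^*x)=\|x\xi_\psi\|^2.
\]

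The next step is density: by Kaplansky density applied to the $*$-strongly dense subalgebra $M_0\subseteq M$, every element of $M$ is the $*$-strong limit of a bounded net in $M_0$, so $M_0\xi_\psi$ is norm-dense in $M\xi_\psi=H_\psi$; likewise $N_0\xi_\varphi$ is dense in $H_\varphi$. Thus $U_0$ extends to a unitary $U\colon H_\psi\to H_\varphi$. A direct computation on the dense set $M_0\xi_\psi$ gives the intertwining relation $Ux=\alpha(x)U$ for every $x\in M_0$, i.e.\ $UxU^*=\alpha(x)$ viewed inside $\mathbb{B}(H_\varphi)$. Taking $x=1$ also yields $U\xi_\psi=\xi_\varphi$.

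Now define $\tilde\alpha\colon M\to \mathbb{B}(H_\varphi)$ by $\tilde\alpha(m):=UmU^*$. As conjugation by a unitary, $\tilde\alpha$ is an injective normal $*$-homomorphism that agrees with $\alpha$ on $M_0$. Since $\tilde\alpha$ is strong-operator continuous on bounded sets, Kaplansky density applied to any $m\in M$ yields a bounded net in $M_0$ with $\tilde\alpha(m_\lambda)=\alpha(m_\lambda)\in N_0$ converging strongly to $\tilde\alpha(m)$; by strong closedness of $N$ we get $\tilde\alpha(M)\subseteq N$. Conversely, $\tilde\alpha(M)$ is a von Neumann subalgebra of $N$ containing $N_0$, and the latter is $*$-strongly dense in $N$, so $\tilde\alpha(M)=N$. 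Finally, the state-preservation transfers to all of $M$ via $\xi_\varphi=U\xi_\psi$:
\[
\varphi(\tilde\alpha(m))=\langle UmU^*\xi_\varphi,\xi_\varphi\rangle=\langle m\xi_\psi,\xi_\psi\rangle=\psi(m),\qquad m\in M.
\]

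The only delicate point is the density claim for $M_0\xi_\psi$ and $N_0\xi_\varphi$, where norm-density of $M_0$ in $M$ would be insufficient; here the hypothesis of $*$-strong density is exactly what is needed to invoke Kaplansky density and obtain bounded $*$-strongly approximating nets. Once that is in hand the remainder of the argument is a standard transfer of structure via the intertwining unitary.
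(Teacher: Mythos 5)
Your argument is correct and is essentially the proof given in the paper: the same GNS construction, the same intertwining isometry $U_0(x\xi_\psi)=\alpha(x)\xi_\varphi$ extended to a unitary via density of $M_0\xi_\psi$ and $N_0\xi_\varphi$, and the same extension $\tilde\alpha=U(\,\cdot\,)U^*$ identified with $N$ by $*$-strong density. The only cosmetic differences are that you route the density step through Kaplansky (cyclicity of $\xi_\psi$ plus strong density already suffices) and obtain state-preservation from $U\xi_\psi=\xi_\varphi$ by ``taking $x=1$'' (which tacitly assumes $1\in M_0$ and $\alpha(1)=1$), whereas the paper deduces $\varphi\circ\alpha=\psi$ on all of $M$ from normality; both routes are fine.
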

\begin{proof}
By the GNS construction, we may assume that $M$ acts on $L^2(M,\psi)$ and $N$ acts on $L^2(N,\varphi)$ so that $\psi=\nai{\ \cdot\ \xi_{\psi}}{\xi_{\psi}}$, $\varphi=\nai{\ \cdot\ \xi_{\varphi}}{\xi_{\varphi}}$ for unit vectors $\xi_{\psi}\in L^2(M,\psi)$ and $\xi_{\varphi}\in L^2(N,\varphi)$.  Define $U_0\colon M_0\xi_{\psi}\to N_0\xi_{\varphi}$ by 
\[U_0x\xi_{\psi}:=\alpha(x)\xi_{\varphi},\ \ \ \ \ x\in M_0.\]
Since $\psi$ is faithful, $U_0$ is well-defined. Also, since $M_0$ is $*$-strongly dense in $M$ and $\xi_{\psi}$ is cyclic for $M$, $M_0\xi_{\psi}$ is dense in $L^2(M,\psi)$. Similarly, $N_0\xi_{\varphi}$ is dense in $L^2(N,\varphi)$. By $\varphi\circ \alpha=\psi$ on $M_0$, we have 
\[\|U_0x\xi_{\psi}\|^2=\varphi(\alpha(x^*x))=\psi(x^*x)=\|x\xi_{\psi}\|^2,\ \ \ \ x\in M_0.\]
Therefore, $U_0$ extends to an isometry $U$ from $L^2(M,\psi)$ into $L^2(N,\varphi)$ and the range of $U$ contains a dense subspace $N_0\xi_{\varphi}$. Therefore $U$ is onto. If $x\in M_0$, then for each $y\in N_0$, 
\[UxU^*y\xi_{\varphi}=Ux\alpha^{-1}(y)\xi_{\psi}=\alpha(x)y\xi_{\varphi},\]
so that by the density of $N_0\xi_{\varphi}$, we have 
\[UxU^*=\alpha(x),\ \ \ \ x\in M_0.\]
Let now $x\in M$. Then there exists a net $(x_i)_{i\in I}$ in $M_0$ converging $*$-strongly to $x$. Then $Ux_iU^*=\alpha(x_i)\ (\in N)$ converges $*$-strongly to $UxU^*$. 
Therefore $UxU^*\in N$. Then it is clear that $M\ni x\mapsto UxU^*\in N$ defines a $*$-isomorphism with inverse $N\ni y\mapsto U^*yU\in M$. The uniqueness of the extension is clear by the $*$-strong density of $M_0$ and the fact that isomorphisms between von Neumann algebras are normal. It is now clear that $\varphi\circ \alpha=\psi$ on $M$. This finishes the proof. 
\end{proof}
\begin{proof}[Proof of Proposition \ref{prop: trace is integration}]
Let $A_0:=\{x\in A; \lim_{g\to 1}\|\alpha_g(x)-x\|=0\}$, which is a norm-dense C$^*$-subalgebra of $A$. 
Let $\{a_n\}_{n=1}^{\infty}$ be a $*$-strongly dense subset of $A_0$, and let $C=C^*(\{\beta_g(a_n);g\in G,\ n\in \mathbb{N}\}\cup \{1\})$. Then since $G$ is separable and $g\mapsto \beta_g(a_n)$ is norm-continuous for all $n\in \mathbb{N}$, $C$ is a norm-separable $*$-strongly dense C$^*$-subalgebra of $A$, and let $X=\text{Spec}(C)$ (the Gelfand spectrum), which is a separable and compact metrizable space, and let $\Phi\colon C\to C(X)$ be the Gelfand transform. 
Since $\tau\circ \Phi^{-1}\in C(X)^*_+$, there exists a Borel probability measure $m$ on $X$ such that 
\[\tau(c)=\int_X\Phi(c)(x)\,\text{d}m(x),\ \ \ \ c\in C.\]
We let $C(X)$ act on $L^2(X,m)$ by multiplication, so $C(X)\subset L^{\infty}(X,m)$ is a $*$-strongly dense $*$-subalgebra of $L^{\infty}(X,m)$. The map $\tau_{m}\colon L^{\infty}(X,m)\ni f\mapsto \int_Xf\,\text{d}m\in \mathbb{C}$ is a normal faithful tracial state on $L^{\infty}(X,m)$. Moreover, $\tau_{m}(\Phi(c))=\tau(c)$ for every $c\in  C$. Therefore by Lemma \ref{lem: extension}, $\Phi$ can be extended to a $*$-isomorphism $\Phi\colon C''=A\to C(X)''=L^{\infty}(X,m)$ such that $\tau_{m}(\Phi(a))=\tau(a)$ for every $a\in A$. This shows (\ref{eq: trace = integration}). Now let $g\in G$ and $x\in X$. Let $m_x\in C(X)^*$ be the delta probability measure at $x\in X$. 
Let $\beta^*$ be the dual action of $G$ on $C(X)^*$ given by 
\begin{equation}
\nai{\beta_g^*(\nu)}{f}:=\nai{\nu}{\Phi\circ \beta_{g^{-1}}\circ \Phi^{-1}(f)},\ \ \ \ \ \ \nu\in C(X)^*,\ f\in C(X),\ g\in G.
\end{equation}
Since $\beta$ is an action, $\beta_g^*$ maps pure states (on $C\cong C(X)$) to pure states, and pure states on $C(X)$ are precisely the delta probability measures, there exists a unique point $g\cdot x\in X$ such that 
\[\beta_g^*(m_x)=m_{g\cdot x}.\]
We show that $G\times X\ni (g,x)\mapsto g\cdot x\in X$ is a continuous $m$-preserving action. 
It is routine to check that this is indeed an action of $G$ on $X$. 
We show that the action is continuous. Since $G$ is a Polish group and $X$ is a Polish space, it suffices to show that the action is separately continuous (see e.g., \cite[Theorem 9.14]{Kechris96}). Suppose that $(x_n)_{n=1}^{\infty}\subset X$ converges to $x\in X$ and let $g\in G$. Then by the definition of the Gelfand spectrum, $m_{x_n}\stackrel{n\to \infty}{\to}m_x$ (weak*). Since $\beta_g$ induces an action on $C$ ($C$ is $\beta$-invariant by construction), we have 
$$\beta_g^*(m_{x_n})=m_{g\cdot x_n}\stackrel{n\to \infty}{\to} \beta_g^*(m_x)=m_{g\cdot x},$$
which implies $g\cdot x_n\stackrel{n\to \infty}{\to} g\cdot x$ (again by the definition of the topology on the Gelfand spectrum). Let now $x\in X$ and $(g_n)_{n=1}^{\infty}\subset G$ be a sequence converging to $g\in G$. 
Then for each $f\in C(X)$, 
\eqa{
\nai{\beta_{g_n}^*(m_x)}{f}&=\nai{m_x}{\Phi\circ \beta_{g_n^{-1}}\circ \Phi^{-1}(f)}\stackrel{n\to \infty}{\to}\nai{m_x}{\Phi\circ \beta_{g^{-1}}\circ \Phi^{-1}(f)}\\
&=\nai{\beta_g^*(m_x)}{f},
} 
whence $g_n\cdot x\stackrel{n\to \infty}{\to}g\cdot x$. Therefore the $G$-action on $X$ is continuous. 
Next, we show that for each $g\in G$, the map $X\ni x\mapsto g\cdot x\in X$ preserves $m$. It suffices to show that 
\begin{equation*}
\int_Xf(g^{-1}x)\,\text{d}m(x)=\int_Xf(x)\,\text{d}m(x),\ \ \ \ f\in C(X).
\end{equation*}
Let $\nu=\sum_{i=1}^n\lambda_im_{x_i}\in \text{conv}\{m_x;x\in X\}=:\mathcal{D}$. 
Then 
\eqa{
\beta_g^*(\nu)=\sum_{i=1}^{n}\lambda_i\delta_{g\cdot x_i}=\nu(g^{-1}\ \cdot\ ).
}
By the Krein-Milman Theorem, $\mathcal{D}$ is weak*-dense in the state space of $C(X)$. This shows that 
\begin{equation}
\beta_g^*(\nu)=\nu(g^{-1}\ \cdot\ ),\ \ \ \ \ \ \nu\in C(X)^*_+.\label{eq: cov for measure}
\end{equation}
In particular, by $\tau\circ \beta_g=\tau$, 
\eqa{
\int_Xf(g^{-1}x)\,\text{d}m(x)&=\int_Xf(x)\,\text{d}m(g(x))\stackrel{(\ref{eq: cov for measure})}{=}\nai{\beta_{g^{-1}}^*(m)}{f}\\
&=\nai{m}{\Phi\circ \beta_g\circ \Phi^{-1}(f)}\\
&=\tau_{m}(\Phi\circ \beta_g\circ \Phi^{-1}(f))=\tau(\beta_g\circ \Phi^{-1}(f))\\
&=\tau(\Phi^{-1}(f))=\tau_{m}(f)\\
&=\int_Xf(x)\,\text{d}m(x).
}
Therefore the $G$-action on $X$ is $m$-preserving. Finally, we show (\ref{eq: covariance of action}). 
Since the map $$\beta'_g\colon L^{\infty}(X,m)\ni f\mapsto f(g^{-1}\cdot\ )\in L^{\infty}(X,m)$$ defines a $\tau_{m}$-preserving action and since $C(X)$ is $*$-strongly dense in $L^{\infty}(X,m)$, it suffices to show that
(\ref{eq: covariance of action}) holds for every $f=\Phi(a)\in C(X)\ (a\in C),\ x\in X$, and $g\in G$. 
But 
\eqa{
f(g^{-1}x)&=\nai{\mu_{g^{-1}\cdot x}}{f}=\nai{\beta_{g^{-1}}^*(\mu_x)}{f}\\
&=\nai{\mu_x}{\Phi\circ \beta_g\circ \Phi^{-1}(f)}\\
&=\Phi\circ \beta_g\circ \Phi^{-1}(f)(x).
}
Finally, assume that $g_n\stackrel{n\to \infty}{\to}g$ in $G$. Then for every measurable set $A\subset X$, we have 
\eqa{
m(g_nA\bigtriangleup gA)&=\int_X|\Phi\circ \beta_{g_n^{-1}}\circ \Phi^{-1}(1_A)(x)-\Phi\circ \beta_{g^{-1}}\circ \Phi^{-1}(1_A)(x)|\,\text{d}m(x)\\
&\le \|\beta_{g_n^{-1}}\circ \Phi^{-1}(1_A)-\beta_{g^{-1}}\circ \Phi^{-1}(1_A)\|_2\stackrel{n\to \infty}{\to}0,
}
because the $u$-topology on ${\rm{Aut}}(A)$ is the same as the topology of pointwise $\|\cdot \|_2$-convergence. Thus the $G$-action defines a continuous homomorphism $G\to {\rm{Aut}}(X,m)$. 
This finishes the proof. 
\end{proof}

\section*{Acknowledgments}
We thank Professors \L ukasz Grabowski, Magdalena Musat, Yuri Neretin, Izumi Ojima and Mikael R\o rdam for helpful discussions on the $\mathscr{U}_{\rm{fin}}$-problem at early stages of the project. The final part of the manuscript was completed while HA was visiting the Mittag-Leffler Institute for the program ``Classification of operator algebras: complexity, rigidity, and dynamics". He thanks the institute and the workshop organizers for the invitation and for their hospitality. 
This research was supported by JSPS KAKENHI 16K17608 (HA), 6800055 and 26350231 (YM), ERC Starting Grant No.\ 277728 (AT) and the Danish Council for Independent Research through grant no. 10-082689/FNU. (AT), from which HA was also supported while he was working at University of Copenhagen. Last but not least, we would like to thank the anonymous referee for suggestions which improved the presentation of the paper.


\begin{thebibliography}{9999999}
\bibitem[AMM85]{AMM85} I. Aharoni, B. Maurey and B. S. Mityagin, Uniform embeddings of metric spaces and of Banach spaces into Hilbert spaces, {\it Israel J. Math.} \textbf{52} (1985), 251--265.
\bibitem[AK06]{KaltonAlbiac06} F. Albiac and N. Kalton, Topics in Banach space theory, 
Graduate Texts in Mathematics \textbf{233},  Springer (2006).
\bibitem[AM12-1]{AM12-1} H. Ando and Y. Matsuzawa, Lie Group-Lie Algebra Correspondences of Unitary Groups in Finte von Neumann Algebras, {\it Hokkaido Math. J.}  \textbf{41} (2012), 31--99.
\bibitem[AM12-2]{AM12-2} H. Ando and Y. Matsuzawa, On Polish Groups of Finite Type, {\it Publ. RIMS} vol. \textbf{48} (2012), 389--408.
\bibitem[Be10]{Beltita} D. Beltita, Lie theoretic significance of the measure topologies associated with a finite trace, {\it Forum Math.} \textbf{22} (2010), 241--253.
\bibitem[BL00]{BenyaminiLindenstrauss00} Y. Benyamini and J. Lindenstrauss, Geometric nonlinear functional analysis Vol. \textbf{1}, 
American Mathematical Society Colloquium Publications, \textbf{48}. American Mathematical Society, Providence, RI (2000).
\bibitem[Bo87]{Bozejko87} M. Bozejko, Uniformly bounded representations of free groups,  {\it J. Reine Angew. Math.} \textbf{377} (1987), 170--186. 

\bibitem[Da50]{Day50} M. Day, Means for the bounded functions and ergodicity of the bounded representations of semi-groups, {\it Trans. Amer. Math. Soc.} \textbf{69} 
(1950), 276--291.

\bibitem[Di50]{Dixmier50} J. Dixmier, Les moyennes invariantes dans les semi-groupes et leurs applications, {\it Acta Sci. Math. Szeged} \textbf{12} (1950) 213-227.

\bibitem[EM55]{EhrenpreisMautner55} L. Ehrenpreis and F. Mautner, Uniformly bounded representations of groups, {\it Proc. Nat. Acad. Sci. U.
S. A.} \textbf{41} (1955), 231--233.
\bibitem[EM09]{EpsteinMonod09} I. Epstein and N. Monod, Nonunitarizable representations and random forests, {\it Int. Math. Res. Not. IMRN} \textbf{22} (2009), 4336--4353. 
\bibitem[Gal09]{Galindo09} J. Galindo, On unitary representability of topological groups, {\it Math. Z.} \textbf{263} (2009), 211--220. 
\bibitem[Gao05]{Gao05}S. Gao, Unitary group actions and Hilbertian Polish metric spaces,
{\it Contemp. Math.} {\bf 380}, Amer. Math. Soc., Providence, RI, 2005. 
\bibitem[GR85]{GCRF85} J. Garc\'ia-Cuerva and J. L. Rubio de Francia, Weighted norm inequalities and related topics, North-Holland Mathematics Studies \textbf{116} (1985). 
\bibitem[Kec96]{Kechris96} A. S. Kechris, Classical descriptive set theory, Springer (1996).
\bibitem[Kw72]{Kwapien72} S.\,Kwapie\'n, Isomorphic characterizations of inner product spaces by orthogonal series with vector valued coefficients, 
{\it Studia Math.} \textbf{44} (1972), 583--595.
\bibitem[Low12]{Lowther} G. Lowther, a comment on Mathoverflow at http://mathoverflow.net/questions/98410
\bibitem[MZ83-1]{ManteroZappa83} A. M. Mantero and A. Zappa, The Poisson transform and representations of a free group, {\it J. Funct. Anal.} \textbf{51} (1983), 372--399. 
\bibitem[MZ83-2]{ManteroZappa83-2} 
A. M. Mantero and A. Zappa, Uniformly bounded representations and $L^p$-convolution operators on a free group,  Harmonic analysis (Cortona, 1982), 333--343,
Lecture Notes in Math., \textbf{992}, Springer (1983). 
\bibitem[Ma72]{Maurey72}  B. Maurey, Th\'eor\`emes de Nikishin: th\'eor\`emes de factorisation pour les applications lineaires a valeurs dans un espace $L^0(\Omega,\mu)$, {\it Seminaire dfanalyse fonctionnelle (Polytechnique)} 
\textbf{10} and \textbf{11} (1972-1973), 1--10. 
\bibitem[Me02]{Megrelishvili02}  M. Megrelishvili, Reflexively but not unitarily representable topological groups. {\it Topology Proc.} \textbf{25} (2002), 615--625.
\bibitem[MO10]{MonodOzawa10} N. Monod and N. Ozawa, The Dixmier problem, lamplighters and Burnside groups, {\it J. Funct. Anal.} \textbf{258} no. 1 (2010), 255--259.
\bibitem[Ne74]{Nelson74} E. Nelson, Notes on non-commutative integration, {\it J. Funct. Anal.} \textbf{15} (1974), 103--116.
\bibitem[Ni70]{Nikishin70} E. M. Nikishin, Resonance theorems and superlinear operators (Russian), 
{\it Uspehi Mat. Nauk} \textbf{25} (156) (1970), 129--191. 
\bibitem[NT51]{NakamuraTakeda51} M. Nakamura and Z. Takeda, Group representation and Banach limit, {\it Tohoku Math. J.} \textbf{3} (1951), 132--135.
\bibitem[Os09]{Osin09} D. Osin, L$^2$-Betti numbers and non-unitarizable groups without free subgroups, {\it Int. Math. Res. Not.} IMRN \textbf{22} (2009), 4220--4231.
\bibitem[Oz06]{Ozawa06} N. Ozawa, An Invitation to the Similarity Problems (after Pisier) (2006), available at http://www.kurims.kyoto-u.ac.jp/~narutaka/notes/similarity.pdf
\bibitem[Po07]{Po07} S. Popa, Cocycle and orbit equivalence superrigidity for malleable actions of $w$-rigid groups, {\it Invent. Math.} \textbf{170} (2007), 243--295. 
\bibitem[Pi78]{Pisier78} G. Pisier, Some results on Banach spaces without local unconditional structure, {\it Compositio Math.} \textbf{37} (1978), 3--19. 
\bibitem[Pi86]{Pisier86} G. Pisier, Factorization of linear operators and geometry of Banach spaces, CBMS Regional Conference Series in Mathematics, vol. \textbf{60}, Published for the Conference Board of the Mathematical Sciences, Washington, DC (1986). 
\bibitem[Pi01]{Pisier01} G. Pisier, Similarity problems and completely bounded maps.
Second, Lecture Notes in Mathematics, \textbf{1618}. Springer-
Verlag, (2001).
\bibitem[Pi05]{Pisier05} G. Pisier, Are unitarizable groups amenable?, in: {\it Infinite groups: geometric, combiatorial and dynamical aspects}, Progress in Mathematics \textbf{248}, Birkh\"auser (2005), 323--362.
\bibitem[PS86]{PitlykSzwarc86} T. Pitlyk and R. Szwarc, An analytic family of uniformly bounded representations of free groups, {\it Acta. Math.} \textbf{157}, 12 (1986), 287--309.
\bibitem[Si74]{simon} B. Simon, The $P(\Phi)_2$ Euclidean (Quantum) Field Theory, Princeton University Press (1974)
\bibitem[SzN47]{Nagy47}  B. Sz.-Nagy, On uniformly bounded linear transformations in Hilbert space, {\it Acta Sci. Math.} (Szeged) \textbf{11} (1947), 152--157.
\bibitem[Sz88]{Szwarc88} R. Szwarc, An analytic series of irreducible representations of the free group, {\it Ann. Inst. Fourier} (Grenoble) \textbf{38} (1988), 87--110.
\bibitem[VZ74]{VasilescuZsido74} F.\,H.\,Vasilescu and L. Zsido, Uniformly bounded groups in finite $W^*$-algebras, {\it Acta Sci. Math.} 
(Szeged), \textbf{36} (1974), 189--192.
\bibitem[Ya85]{Yamasaki} Y. Yamasaki, Measures on infinite dimensional spaces, Series in Pure Mathematics vol. \textbf{5}, World Scientific (1985) 
\bibitem[Ye71]{Yeadon71} F. J. Yeadon, A new proof of the existence of a trace in a finite von Neumann algebra, {\it Bull. Amer. Math. Soc.} \textbf{77} (1971), 257--260.  
\end{thebibliography}
\end{document}